\newtheorem{theorem}{Theorem}[section]
\newtheorem{lemma}[theorem]{Lemma}
\newtheorem{corollary}[theorem]{Corollary}
\newtheorem{remark}[theorem]{Remark}
\newtheorem{proposition}[theorem]{Proposition}
\newtheorem{definition}[theorem]{Definition}
\begin{document}

\title{Dynamical systems and forward-backward algorithms associated with the sum of a convex subdifferential \\ and a monotone cocoercive operator}

\author{Boushra Abbas }

\author{H\'edy Attouch}

\address{Institut de Math\'ematiques et Mod\'elisation de Montpellier, UMR 5149 CNRS, Universit\'e Montpellier 2, place Eug\`ene Bataillon,
34095 Montpellier cedex 5, France}
\email{hedy.attouch@univ-montp2.fr, }

\vspace{0.5cm}

\date{March 24, 2014}

\begin{abstract}

In a Hilbert framework, we introduce  continuous and discrete dynamical systems which
aim at solving inclusions governed by structured
monotone operators $A=\partial\Phi+B$, where $\partial\Phi$ is the
subdifferential of a convex lower semicontinuous function $\Phi$,
and $B$ is a monotone cocoercive operator. We first consider the extension to this setting of the  regularized Newton dynamic
with two potentials which was considered in \cite {AAS}. 
Then, we revisit some related dynamical systems, namely the semigroup of contractions generated by $A$, and the continuous gradient projection dynamic of \cite{Bo}.
By a Lyapunov analysis, we show the convergence properties of the orbits of these systems. 
 The time discretization of these dynamics gives various  forward-backward splitting methods (some new) for solving structured monotone inclusions
involving non-potential terms. The convergence of these algorithms is obtained under classical step size limitation.
Perspectives are given in the field of numerical splitting methods for optimization, and multi-criteria decision processes. 
\end{abstract}

\thanks{\textsf{Contact author: H. Attouch, }\texttt{hedy.attouch@univ-montp2.fr}\textsf{,
phone: +33 04 67 14 35 72, fax: +33 04 67 14 35 58.}%
}

\maketitle

\vspace{0.5cm}

\paragraph{\textbf{Key words}:}  Structured monotone inclusions; forward-backward algorithms;  subdifferential operators; cocoercive operators; proximal-gradient method;  dissipative dynamics;  Lyapunov analysis; weak asymptotic convergence; Levenberg-Marquardt regularization;  multiobjective decision.

\vspace{1cm}

\paragraph{\textbf{AMS subject classification}} \ 34G25, 47J25, 47J30,
47J35, 49M15, 49M37, 65K15, 90C25, 90C53.

\markboth{B. ABBAS, H. ATTOUCH}
  {Dynamics and FB algorithms with cocoercive operators}

\section*{Introduction}

Throughout this paper, $\mathcal H$ is a real Hilbert space with scalar product
$\left\langle .,.\right\rangle $ and norm $\|\cdot{}\|$. 
We are going to study some  continuous and discrete 
 dynamics which aim at solving structured monotone inclusions
of the following type 
\begin{equation}
\partial\Phi(x)+Bx\ni0\label{basicpb}
\end{equation}
 where $\partial\Phi$ is the subdifferential of a convex lower semicontinuous
function $\Phi: \mathcal H\rightarrow\mathbb{R}\cup\left\{ +\infty\right\} $,
and $B$ is a monotone cocoercive operator. Recall that a monotone
operator $B: \mathcal H\rightarrow \mathcal H$ is cocoercive if there exists a constant
$\beta>0$ such that for all $x,y\in \mathcal H$ 
\[
\left\langle Bx-By,x-y\right\rangle \geq\beta\left\Vert Bx-By\right\Vert ^{2}.
\]
 The abstract formulation (\ref{basicpb}) covers a large variety
of problems in physical and decision sciences, see for example \cite{abc}, \cite{AM}, \cite{BC},
\cite{Zhud96}, and the discussion at the end of the paper. It is directly connected to two important areas, namely
convex optimization (take $B=0$), and the theory of fixed point for
nonexpansive mappings (take $\Phi=0$, and $B=I-T$ with $T$ a nonexpansive
mapping). It comes naturally into play when we consider both aspects within a physical or decision process.

By a classical result, the two operators $\partial\Phi$
and $B$ are maximal monotone, as well as their sum $A=\partial\Phi+B$.
We will exploit the structure of the maximal monotone operator
$A$, first  to develop  continuous dynamics, and then, by  time
discretization, splitting forward-backward algorithms that aim to
solve (\ref{basicpb}). As a common characteristic of these dynamics, they are first-order evolution equations, whose stationary points are precisely the zeroes of the operator $A=\partial\Phi+B$.
 Among these dynamics some are new, and for others it is an opportunity to revisit, and extend some convergence results with a unifying perspective. 

1. Our first  concern  is the Newton-like dynamic approach to solving monotone
inclusions which was introduced in \cite{AS}. To adapt it to structured
monotone inclusions and splitting methods, this study was developed
in \cite{AAS}, where the operator is the sum of the
subdifferential of a convex lower semicontinuous function, and the
gradient of a convex differentiable function. We wish to extend this
study to a non potential case, and so enlarge its range of applications.
Specifically, our analysis focuses on the convergence properties
(as $t\to+\infty$) of the orbits of the system (\ref{basic-system1})-(\ref{basic-system2})
\begin{align}
&\upsilon\left(t\right)\in\partial\Phi\left(x\left(t\right)\right)\label{basic-system1}\\
&\lambda\dot{x}\left(t\right)+\dot{\upsilon}\left(t\right)+\upsilon\left(t\right)+B\left(x\left(t\right)\right)=0.\label{basic-system2}
\end{align}
 In (\ref{basic-system2}), $\lambda$ is a positive constant which
acts as a Levenberg-Marquard regularization parameter. 
When $\lambda$ is small,  and $B=0$,  the system is close to the continuous Newton method
for solving $\partial\Phi(x)\ni0$.
 The $x$ components of the stationary points of the $(x,v)$ system (\ref{basic-system1})-(\ref{basic-system2})
 are precisely the zeroes of the operator $A=\partial\Phi+B$.
The Cauchy problem for (\ref{basic-system1})-(\ref{basic-system2})
is well-posed. Indeed, by introducing the new unknown function $y(\cdot)=x(\cdot)+\mu v(\cdot)$,
and setting $\mu=\frac{1}{\lambda}$, (\ref{basic-system1})-(\ref{basic-system2})
can be equivalent written as 
\[
\begin{cases}
x(t)=\mbox{prox}_{\mu\Phi}(y(t)),\\
\dot{y}(t)+y(t)-\mbox{prox}_{\mu\Phi}(y(t))+\mu B\left(\mbox{prox}_{\mu\Phi}(y(t))\right)=0,
\end{cases}
\]
 where $\mbox{prox}_{\mu\Phi}$ is the proximal mapping of $\mu\Phi$. Since $\mbox{prox}_{\mu\Phi}$
and $B$ are Lipschitz continuous operators, the above differential
equation (with respect to $y$) is relevant to Cauchy-Lipschitz theorem.
 Under the sole assumption that the solution set $S$ of (\ref{basicpb}) is not empty, in Theorem \ref{conv1}
 we will show that, for
any orbit of system (\ref{basic-system1})-(\ref{basic-system2}), $x(\cdot)$ converges weakly to an element of $S$. 
Strong convergence is obtained under the assumption $\Phi$ inf-compact, or strongly convex.

The above system is regular with respect to the new variable $y$. Its   explicit discretization  gives (with a constant step size $h>0$),
the following algorithm: \ $(x_{k},y_{k})\rightarrow(x_{k},y_{k+1})\rightarrow(x_{k+1},y_{k+1})$,
\[
{\rm (FBN)}\ \begin{cases}
x_{k}=\mbox{prox}_{\mu\Phi}(y_{k}),\\
y_{k+1}=(1-h)y_{k}+h\left(x_{k}-\mu B\left(x_{k}\right)\right).
\end{cases}
\]
We will show in Theorem \ref{T:main} that, under the assumption,   $0<h\leq1$, and $0<\mu<2\beta$,
(FBN) generates sequences that converge weakly to equilibria. Indeed, this is a limitation of the step size very similar that of the classical forward-backward algorithm.
 Note that, when $h \neq 1$, the algorithm (FBN) differs from the classical forward-backward algorithm, by order in the composition of the two basic blocks $\mbox{prox}_{\mu\Phi}$ and 
 $ I - \mu B$. 
 
\smallskip

2. Then, we consider a
naturally related dynamical system, which is the semigroup of contractions
generated by $-A$, $A=\partial\Phi+B$, whose orbits
are the solution trajectories of the differential inclusion 
\begin{equation}
\dot{x}\left(t\right)+\partial\Phi(x(t))+B\left(x\left(t\right)\right)\ni0.\label{sg00}
\end{equation}
In Theorem \ref{sg2},   we show the weak convergence of the orbits of  (\ref{sg00}) to solutions of (\ref{basicpb}), a property which surprisingly has not been systematically
studied before. 
Explicit-implicit time discretization of (\ref{sg00}) gives the classical forward-backward algorithm.

\smallskip

3. Finally, we consider the  dynamic which is associated to the  reformulation of  (\ref{basicpb}) as a fixed point problem:
\begin{equation}
\dot{x}\left(t\right)+   x(t) -  \mbox{prox}_{\mu\Phi} \left(  x(t) - \mu B\left(x\left(t\right) \right)\right)  = 0.\label{gp000}
\end{equation}
It is a regular dynamic which is relevant of Cauchy-Lipschitz theorem. Its convergence properties have been first investigated by Antipin \cite{Ant} and Bolte \cite{Bo} in the 
particular case where $\Phi$ is the indicator function of a closed convex set $C$, and $B$ is the gradient of a convex differentiable function. In that case, the above system specializes to the continuous gradient projection method.   In Theorem \ref{pg-Thm3}  we extend these convergence results to our general setting.
The  explicit time discretization of (\ref{gp000})  gives the relaxed forward-backward algorithm
\begin{center}
 $x_{k+1} =  (1- h)x_k +  h \mbox{prox}_{\mu\Phi}\left( x_k  - \mu B(x_{k})\right).$
\end{center}

A thorough comparative study of forward-backward algorithms provided by discretization of these various related systems is an important issue from a numerical point of view.
It is a subject of ongoing study (see \cite {APR}), which is beyond the scope of this document.

\smallskip

The paper is organized as follows: In Section \ref{RN},  we study the convergence properties of the orbits of the continuous
dynamical system (\ref{basic-system1})-(\ref{basic-system2}).
In Section \ref{algo},  we 
 show the convergence properties of the 
forward-backward (FBN) algorithm which is obtained by time discretization of  (\ref{basic-system1})-(\ref{basic-system2}). In Section \ref{sgroup}, we examine the convergence
properties of the orbits of the semigroup generated by $-(\partial\Phi+B)$,
and make the link with the  
 classical  FB algorithm. 
In Section \ref{pg. section}, we introduce the proximal-gradient dynamical system, study its convergence properties, 
and make the link with the  
 relaxed FB algorithm.
We complete this study by some perspectives  in the realm of numerical optimization, and multi-criteria decision processes.

\section{The continuous regularized Newton-like dynamic}\label{RN}

\subsection{Definition, global existence}\label{def-glob-exist}
By applying the Minty transformation to $\partial\Phi$, system (\ref{basic-system1})-(\ref{basic-system2})
can be reformulated in a form which is relevant to the Cauchy-Lipschitz
theorem, see \cite{AAS}, \cite{ARS}, \cite{AS}. First set $\mu=\frac{1}{\lambda}$
and rewrite (\ref{basic-system2}) as 
\begin{equation}
\dot{x}\left(t\right)+\mu\dot{\upsilon}\left(t\right)+\mu\upsilon\left(t\right)+\mu B\left(x\left(t\right)\right)=0.\label{bas-mu}
\end{equation}
Let us introduce the new unknown function $y(\cdot)=x(\cdot)+\mu v(\cdot)$. Since $v(\cdot) \in \partial \Phi (x(\cdot))$,    we have
$x(\cdot) = \mbox{prox}_{\mu\Phi}y(\cdot)$, where $\mbox{prox}_{\mu\Phi}$ is   the proximal mapping associated to $\mu\Phi$. Recall that $\mbox{prox}_{\mu\Phi}= (I+\mu\partial\Phi)^{-1}$, where
$(I+\mu\partial\Phi)^{-1}$ is the resolvent of index $\mu>0$ of the maximal monotone operator $\partial\Phi$. 
We  obtain the
equivalent dynamic 
\begin{equation}
\begin{cases}
x\left(t\right)=\mbox{prox}_{\mu\Phi}(y\left(t\right))\\
\dot{y}\left(t\right)+y\left(t\right)-\mbox{prox}_{\mu\Phi}(y\left(t\right))+\mu B\left(\mbox{prox}_{\mu\Phi}(y\left(t\right))\right)=0,
\end{cases}\label{bas3}
\end{equation}
 which makes use only of the proximal mapping associated to $\mu\Phi$, and $B$,
which are both Lipschitz continuous operators. Indeed, for any $\mu >0$, the operator $\mbox{prox}_{\mu\Phi}$  is firmly  nonexpansive, see \cite[Proposition 12.27]{BC}. When $\Phi$ is equal
to the indicator function of a closed convex set $C\subset \mathcal H$, $\mbox{prox}_{\mu\Phi}$ is independent of $\mu$, and is equal to $\mbox{proj}_{C}$,
the projection operator on $C$ (whence the proximal terminology, introduced by Moreau).

By Lemma \ref{coco-lip} below,  $B$ is a maximal
monotone Lipschitz continuous operator. Thus, by specializing Theorem
3.1. of \cite{AAS} to our situation, we obtain that the Cauchy problem
for (\ref{basic-system1})-(\ref{basic-system2}) is well-posed. More
precisely, 
\begin{theorem}\label{basic-exist} Let $\lambda>0$ be a positive
constant. Suppose that $\partial\Phi$ is the subdifferential of a
convex lower semicontinuous proper function $\Phi: \mathcal H\rightarrow\mathbb{R}\cup\left\{ +\infty\right\} $,
and that $B:\mathcal H\rightarrow \mathcal H$ is a cocoercive operator on $\mathcal H.$ Let
$\left(x_{0},\upsilon_{0}\right)\in \mathcal H\times \mathcal H$ be such that $\upsilon_{0}\in\partial\Phi\left(x_{0}\right)$.\\
 Then, there exists a unique strong global solution $\left(x\left(\cdot\right),\upsilon\left(\cdot\right)\right):\left[0,+\infty\right[\rightarrow \mathcal H\times \mathcal H$
of the Cauchy problem 
\begin{align}
 & \upsilon\left(t\right)\in\partial\Phi\left(x\left(t\right)\right);\label{2}\\
 & \lambda\dot{x}\left(t\right)+\dot{\upsilon}\left(t\right)+\upsilon\left(t\right)+B\left(x\left(t\right)\right)=0;\label{3}\\
 & x\left(0\right)=x_{0},\upsilon\left(0\right)=\upsilon_{0}.\label{4}
\end{align}
 \end{theorem}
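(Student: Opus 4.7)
The plan is to exploit the Minty parametrization of $\partial\Phi$ to recast the differential inclusion as a genuine ODE on a new variable $y$, then apply the classical Cauchy--Lipschitz theorem globally, and finally transport the resulting regularity back to $(x,\upsilon)$.

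First I would perform the change of unknown $y(t) = x(t) + \mu \upsilon(t)$ with $\mu = 1/\lambda$ already introduced in the excerpt. The key observation is that, because $\partial\Phi$ is maximal monotone, the Minty map $y \mapsto (\mbox{prox}_{\mu\Phi}(y),\, \mu^{-1}(y-\mbox{prox}_{\mu\Phi}(y)))$ is a bijection between $\mathcal{H}$ and the graph of $\partial\Phi$. Hence the constraint $\upsilon(t)\in \partial\Phi(x(t))$ is equivalent to
\[
x(t) = \mbox{prox}_{\mu\Phi}(y(t)), \qquad \upsilon(t) = \mu^{-1}\bigl(y(t)-\mbox{prox}_{\mu\Phi}(y(t))\bigr).
\]
Multiplying (\ref{3}) by $\mu$ and noting that $\dot{x} + \mu\dot{\upsilon} = \dot{y}$ and $\mu\upsilon = y-x$, the system (\ref{2})--(\ref{3}) becomes equivalent to the single first-order ODE
\[
\dot{y}(t) + y(t) - \mbox{prox}_{\mu\Phi}(y(t)) + \mu B\bigl(\mbox{prox}_{\mu\Phi}(y(t))\bigr) = 0,
\]
coupled with $x(t)=\mbox{prox}_{\mu\Phi}(y(t))$.

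Next I would check that the vector field $F(y) := -y + \mbox{prox}_{\mu\Phi}(y) - \mu B(\mbox{prox}_{\mu\Phi}(y))$ is globally Lipschitz on $\mathcal{H}$. Since $\mbox{prox}_{\mu\Phi}$ is firmly nonexpansive, hence $1$-Lipschitz (\cite[Prop.~12.27]{BC}), and since $B$ is $\beta^{-1}$-Lipschitz (by Lemma \ref{coco-lip}, which the excerpt promises), the composition $B\circ \mbox{prox}_{\mu\Phi}$ is $\beta^{-1}$-Lipschitz. Therefore $F$ is globally Lipschitz with constant at most $1 + 1 + \mu\beta^{-1}$. The classical Cauchy--Lipschitz (Picard--Lindel\"of) theorem in Hilbert space then yields a unique $C^1$ global solution $y:[0,+\infty[\to\mathcal{H}$ to $\dot{y} = F(y)$ with initial datum $y(0) = x_0 + \mu \upsilon_0$.

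Finally I would transport this solution back to the original variables. Setting $x(t) = \mbox{prox}_{\mu\Phi}(y(t))$ and $\upsilon(t) = \mu^{-1}(y(t)-x(t))$ gives, by $1$-Lipschitz continuity of the proximal map, that $x$ is (locally) absolutely continuous and that $\upsilon$ is absolutely continuous as well; in fact both are strong solutions. By construction $\upsilon(t)\in\partial\Phi(x(t))$ and (\ref{3}) is satisfied a.e. The initial condition reduces to verifying that $\mbox{prox}_{\mu\Phi}(x_0+\mu\upsilon_0) = x_0$ and $\mu^{-1}(\mu\upsilon_0) = \upsilon_0$, which are immediate from $\upsilon_0\in\partial\Phi(x_0)$. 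Uniqueness of $(x,\upsilon)$ follows from uniqueness of $y$ together with the bijectivity of the Minty parametrization.

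The main (and essentially only) obstacle is the verification that $B$ being cocoercive implies it is globally Lipschitz with constant $\beta^{-1}$; this is exactly Lemma \ref{coco-lip} announced in the excerpt, and follows directly from Cauchy--Schwarz applied to the cocoercivity inequality. Once this is granted, everything else is a routine application of Cauchy--Lipschitz, precisely as in Theorem 3.1 of \cite{AAS}, which the authors explicitly cite as the template to specialize.
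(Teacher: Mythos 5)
Your proof is correct and follows essentially the same route as the paper, which reduces the system to the ODE (\ref{bas3}) in the variable $y=x+\mu\upsilon$ via the Minty parametrization and then invokes the Cauchy--Lipschitz theorem (the paper itself delegates the details to Theorem~3.1 of \cite{AAS}, whereas you carry them out explicitly). The only point worth being slightly more careful about is uniqueness: a strong solution $(x,\upsilon)$ is only absolutely continuous, so the associated $y$ solves $\dot y=F(y)$ merely almost everywhere, and one should note that for a globally Lipschitz $F$ such a locally absolutely continuous solution still coincides with the classical one --- a standard Gronwall argument that your setup accommodates without difficulty.
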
 In the above statement, we use the following notion of
strong solution, as defined in \cite{AAS}, and \cite{AS}. 

\begin{definition}
We say that the pair $\left(x\left(\cdot\right),\upsilon\left(\cdot\right)\right)$
is a strong global solution of {\rm(\ref{2})-(\ref{3})-(\ref{4})} iff
the following properties are satisfied:

\smallskip{}

$\left(i\right)$ $x\left(\cdot\right),\upsilon\left(\cdot\right):[0,+\infty[\rightarrow \mathcal H$
are absolutely continuous on each interval $\left[0,b\right]$, $0<b<+\infty;$

\smallskip{}

$\left(ii\right)$ $\upsilon(t)\in\partial\Phi\left(x\left(t\right)\right)$
\ for all $t\in[0,+\infty[$;

\smallskip{}

$\left(iii\right)$ $\lambda\dot{x}\left(t\right)+\dot{\upsilon}\left(t\right)+\upsilon\left(t\right)+B\left(x\left(t\right)\right)=0$
\ for almost all $t\in[0,+\infty[$; \smallskip{}

$\left(iv\right)$ $x\left(0\right)=x_{0}$, $\upsilon\left(0\right)=\upsilon_{0}$.
\end{definition}

Equivalent systems (\ref{basic-system1})-(\ref{basic-system2}) and
(\ref{bas3}) provide a dynamic 
whose time discretization  yields
a new class of forward-backward algorithms.

\begin{remark} { \rm For sake of simplicity, we have taken the regularization
parameters $\lambda$ and $\mu$ constant. Indeed, the conclusion
of Theorem \ref{basic-exist} still holds true, just assuming that
$\lambda:[0,+\infty[\to]0,+\infty[$ is absolutely continuous on each
bounded interval $[0,b], \ 0<b<+\infty$ (indeed, it is enough assuming
that $\lambda$ is locally of bounded variation). Taking $\lambda$
varying and asymptotically vanishing provides
a dynamic which is asymptotically close the Newton dynamic
associated to $\Phi$, see \cite{AAS}, \cite{ARS}, \cite{AS}.
This is an important issue for fast converging methods, a subject for further studies.}  
\end{remark}

\subsection{Cocoercive operators}\label{cocoer}
We collect some facts that will be useful.
\begin{lemma}\label{coco-lip} 
Let $B:\mathcal H\to \mathcal H$ be a $\beta$-cocoercive operator. Then, $B$ is $\frac{1}{\beta}-$Lipschitz continuous. 
\end{lemma}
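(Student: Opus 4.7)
The plan is to derive the Lipschitz bound directly from the cocoercivity inequality by combining it with the Cauchy--Schwarz inequality. No deep machinery is needed; this is essentially a one-line estimate, so the write-up will be short.

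First I would fix arbitrary points $x, y \in \mathcal H$ and write down the two ingredients: the cocoercivity hypothesis
\[
\beta \|Bx - By\|^2 \leq \langle Bx - By, \, x - y \rangle,
\]
and the Cauchy--Schwarz bound
\[
\langle Bx - By, \, x - y \rangle \leq \|Bx - By\| \cdot \|x - y\|.
\]
Chaining these gives $\beta \|Bx - By\|^2 \leq \|Bx - By\| \cdot \|x - y\|$.

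Next I would split into two cases. If $Bx = By$, the desired inequality $\|Bx - By\| \leq \tfrac{1}{\beta} \|x - y\|$ is trivial. Otherwise $\|Bx - By\| > 0$, and I can divide both sides of the chained inequality by $\|Bx - By\|$ to conclude
\[
\|Bx - By\| \leq \tfrac{1}{\beta} \|x - y\|.
\]
This establishes $\tfrac{1}{\beta}$-Lipschitz continuity of $B$.

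There is no substantive obstacle here: the only subtlety is remembering to handle the degenerate case $Bx = By$ before dividing, and observing that $\beta > 0$ by assumption so that $\tfrac{1}{\beta}$ is well defined.
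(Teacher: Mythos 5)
Your proof is correct and follows exactly the same route as the paper's: cocoercivity chained with the Cauchy--Schwarz inequality, then division by $\|Bx-By\|$. Your explicit treatment of the degenerate case $Bx=By$ is a minor refinement that the paper leaves implicit.
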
 
\begin{proof} Let $x,y\in \mathcal H$. Since
$B$ is $\beta-$cocoercive, by Cauchy-Schwarz inequality we have
\begin{align*}
\beta\left\Vert Bx-By\right\Vert ^{2} & \leq\left\langle Bx-By,x-y\right\rangle \\
 & \leq\left\Vert Bx-By\right\Vert \left\Vert x-y\right\Vert .
\end{align*}
 Hence 
\[
\left\Vert Bx-By\right\Vert \leq\frac{1}{\beta}\left\Vert x-y\right\Vert ,
\]
 which expresses that $B$ is $\frac{1}{\beta}$- Lipschitz continuous.
\end{proof} Let us list some important classes of cocoercive operators. 
\begin{itemize}
\item $B=I-T$ where $T: \mathcal H\to \mathcal H$ is a contraction. One can easily verify
that $B$ is $\frac{1}{2}$-cocoercive.
\item $B=M_{\lambda}$ where $M_{\lambda}$ (with parameter $\lambda>0$)
is the Yosida approximation of a general maximal monotone operator
$M:\mathcal H\to2^{\mathcal H}$, (see \cite{Br}). One can easily verify that $M_{\lambda}$
is $\lambda$-cocoercive. 
\end{itemize}
\noindent By Lemma \ref{coco-lip}, if $B$ is $\beta$-cocoercive,
then it is ${\beta}^{-1}$-Lipschitz continuous. The next lemma, which
provides a converse implication, supplies us with another important
instance of cocoercive operator.
\begin{lemma} {\rm \cite[Corollaire~10]{Bail77}} \label{l:BH}
Let $\Psi: \mathcal H\to\mathbb{R}$ be a differentiable convex function and
let $\tau>0$. Suppose that $\nabla\Psi$ is $\tau$-Lipschitz continuous.
Then $\nabla\Psi$ is $\tau^{-1}$-cocoercive. 
\end{lemma}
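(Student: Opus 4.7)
The plan is to follow the classical Baillon--Haddad argument, which rests on combining the descent lemma (a consequence of $\tau$-Lipschitz continuity of $\nabla\Psi$) with the convexity of a cleverly chosen auxiliary function, and then symmetrizing in the two arguments.

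The first step is to fix $x,y\in\mathcal H$ and introduce the auxiliary function
\[
\varphi_y(z) := \Psi(z)-\langle\nabla\Psi(y),z\rangle.
\]
Since $\Psi$ is convex and we have subtracted a linear term, $\varphi_y$ is again convex and differentiable, with $\nabla\varphi_y(z)=\nabla\Psi(z)-\nabla\Psi(y)$. In particular $\nabla\varphi_y$ is still $\tau$-Lipschitz, and $\nabla\varphi_y(y)=0$, so $y$ is a global minimizer of $\varphi_y$.

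The second step is to apply the descent lemma to $\varphi_y$: for every $u,z\in\mathcal H$,
\[
\varphi_y(u)\leq \varphi_y(z)+\langle\nabla\varphi_y(z),u-z\rangle+\tfrac{\tau}{2}\|u-z\|^2.
\]
I would then specialize this to $u = z-\tfrac{1}{\tau}\nabla\varphi_y(z)$, which after the routine simplification yields
\[
\varphi_y\!\left(z-\tfrac{1}{\tau}\nabla\varphi_y(z)\right)\leq \varphi_y(z)-\tfrac{1}{2\tau}\|\nabla\varphi_y(z)\|^2.
\]
Since $y$ minimizes $\varphi_y$, the left-hand side is bounded below by $\varphi_y(y)$, so choosing $z=x$ gives
\[
\varphi_y(y)\leq \varphi_y(x)-\tfrac{1}{2\tau}\|\nabla\Psi(x)-\nabla\Psi(y)\|^2.
\]
Unpacking $\varphi_y$ yields the basic one-sided estimate
\[
\Psi(y)\leq \Psi(x)+\langle\nabla\Psi(y),y-x\rangle-\tfrac{1}{2\tau}\|\nabla\Psi(x)-\nabla\Psi(y)\|^2.
\]

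The final step is symmetrization. Swapping the roles of $x$ and $y$ in the previous inequality gives the twin estimate, and adding the two cancels the $\Psi$ values and combines the linear terms into $\langle \nabla\Psi(x)-\nabla\Psi(y),x-y\rangle$, leading to
\[
\tfrac{1}{\tau}\|\nabla\Psi(x)-\nabla\Psi(y)\|^2\leq \langle\nabla\Psi(x)-\nabla\Psi(y),\,x-y\rangle,
\]
which is precisely the $\tau^{-1}$-cocoercivity of $\nabla\Psi$. The only real obstacle is guessing the correct auxiliary function $\varphi_y$ and the optimizing choice $u=z-\tfrac{1}{\tau}\nabla\varphi_y(z)$ in the descent lemma; once these are in place the rest of the argument is a mechanical symmetrization.
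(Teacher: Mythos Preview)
Your argument is correct and is exactly the classical Baillon--Haddad proof. Note, however, that the paper does not supply its own proof of this lemma: it is stated with a direct citation to \cite[Corollaire~10]{Bail77} and used as a black box, so there is no in-paper proof to compare against. What you have written is essentially the original argument from that reference.
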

\begin{lemma}{\rm \cite[Lemma~2.3]{Comb04}}\label{l:9} Let
$B:\mathcal H\to \mathcal H$ be a $\beta$-cocoercive operator, and let $\mu\in\left]0,2\beta\right[$.
Then $Id-\mu B$ is nonexpansive. 
\end{lemma}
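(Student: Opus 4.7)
The plan is to do a direct, one-shot expansion of $\|(I-\mu B)x-(I-\mu B)y\|^2$ and feed in the cocoercivity inequality on the cross term. Concretely, for arbitrary $x,y\in\mathcal H$, I would write
\begin{equation*}
\|(I-\mu B)x-(I-\mu B)y\|^2 = \|x-y\|^2 - 2\mu\langle x-y,Bx-By\rangle + \mu^2\|Bx-By\|^2,
\end{equation*}
and then replace the middle term using $\langle x-y,Bx-By\rangle \geq \beta\|Bx-By\|^2$ (the definition of $\beta$-cocoercivity stated in the introduction).

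The upper bound this produces is
\begin{equation*}
\|(I-\mu B)x-(I-\mu B)y\|^2 \leq \|x-y\|^2 + \mu(\mu-2\beta)\|Bx-By\|^2.
\end{equation*}
Because the hypothesis $\mu\in\,]0,2\beta[$ forces $\mu(\mu-2\beta)<0$, the added term is nonpositive, hence $\|(I-\mu B)x-(I-\mu B)y\|\leq \|x-y\|$. That is exactly the nonexpansiveness statement we want.

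There is no real obstacle here; the only thing to be careful about is the sign bookkeeping in the scalar $\mu(\mu-2\beta)$, which is why the admissible range of step sizes is precisely $]0,2\beta[$ (the endpoint $\mu=2\beta$ would still give nonexpansiveness but is excluded so as to obtain strict contractivity behavior in later estimates, such as those appearing in the forward-backward algorithm analysis in Section~\ref{algo}). Note that the same computation, run with equality in the cocoercivity estimate being used as an actual identity for averaged operators, in fact shows that $I-\mu B$ is $\frac{\mu}{2\beta}$-averaged; we will not need this refinement here, so I would omit it and stop at the nonexpansiveness conclusion.
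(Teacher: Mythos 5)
Your computation is correct and complete: expanding the square, bounding the cross term by cocoercivity, and observing that $\mu(\mu-2\beta)<0$ for $\mu\in\left]0,2\beta\right[$ is exactly the standard argument behind this fact. The paper itself gives no proof here --- it simply cites \cite[Lemma~2.3]{Comb04} --- so there is nothing to compare against beyond noting that your estimate also recovers, as you remark, the sharper $\frac{\mu}{2\beta}$-averagedness used later in Section~\ref{algo}.
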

Because of the cocoercive property of $B$, its inverse operator $B^{-1}$
is strongly monotone. Hence, even if the primal problem (\ref{basicpb})
has multiple solutions, the Attouch-Th\'era dual problem (\cite{AT})
\[
B^{-1}\xi-\partial\Phi^{*}(-\xi)\ni0
\]
 has a unique solution (with $\xi=Bz$, and $z$ solution of the primal
problem). Returning to the primal problem (\ref{basicpb}), this gives
the following result (we give below another direct proof which does
not use a duality argument).

\begin{lemma}\label{coco-2} 
Let $B$ be a maximal monotone operator
which is cocoercive, and let $\partial\Phi$ be the subdifferential
of a convex lower semicontinuous proper function $\Phi:\mathcal H\rightarrow\mathbb{R}\cup\left\{ +\infty\right\} $.
Set $S=\left\{ z\in \mathcal H;\;\partial\Phi(z)+Bz\ni0\right\} $ be the solution
set of {\rm(\ref{basicpb})}. Then $Bz$ is a constant vector, as $z$
varies over $S$.
 \end{lemma}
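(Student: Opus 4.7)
The plan is to take two arbitrary solutions $z_1,z_2\in S$ and show directly that $Bz_1=Bz_2$, combining the monotonicity of $\partial\Phi$ with the cocoercivity of $B$.

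First I would write down what membership in $S$ means: for each $i\in\{1,2\}$, there exists $\xi_i\in\partial\Phi(z_i)$ with $\xi_i+Bz_i=0$, so $\xi_i=-Bz_i$. The monotonicity of $\partial\Phi$ then gives
\[
0\le\langle \xi_1-\xi_2,\,z_1-z_2\rangle=-\langle Bz_1-Bz_2,\,z_1-z_2\rangle,
\]
i.e.\ $\langle Bz_1-Bz_2,\,z_1-z_2\rangle\le 0$.

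Next I would invoke cocoercivity of $B$ in the opposite direction:
\[
\beta\|Bz_1-Bz_2\|^2\le\langle Bz_1-Bz_2,\,z_1-z_2\rangle\le 0.
\]
Since $\beta>0$, this forces $\|Bz_1-Bz_2\|=0$, hence $Bz_1=Bz_2$. As $z_1,z_2\in S$ were arbitrary, $Bz$ is constant on $S$.

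There is really no obstacle here; the statement is a two-line consequence of the sign clash between monotonicity (applied to the $\partial\Phi$ part, using $Bz=-\xi$) and cocoercivity (applied to $B$). The only thing worth flagging is that this argument does not require $S$ to be convex or even nonempty—if $S=\emptyset$ the claim is vacuous, and otherwise the common value of $Bz$ on $S$ coincides (up to sign) with the unique solution $\xi$ of the Attouch–Théra dual problem mentioned just before the lemma, which is exactly why the author offers this as the ``direct'' proof.
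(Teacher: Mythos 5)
Your argument is correct and is exactly the paper's own proof: take $z_1,z_2\in S$, use monotonicity of $\partial\Phi$ applied to $-Bz_i\in\partial\Phi(z_i)$ to get $\langle Bz_1-Bz_2,z_1-z_2\rangle\le 0$, then cocoercivity to conclude $Bz_1=Bz_2$. Nothing to add.
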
 
\begin{proof} Let $z_{1}$ and $z_{2}$
be two elements of $S$. Hence $-Bz_{1}\in\partial\Phi(z_{1})$ and
$-Bz_{2}\in\partial\Phi(z_{2})$. By the monotonicity property of
$\partial\Phi$ 
\[
\left\langle -Bz_{1}+Bz_{2},z_{1}-z_{2}\right\rangle \geq0.
\]
 Equivalently 
\[
0\geq\left\langle Bz_{1}-Bz_{2},z_{1}-z_{2}\right\rangle .
\]
 Combining this inequality with the cocoercive property of $B$, 
 $$\left\langle Bz_{1}-Bz_{2},z_{1}-z_{2}\right\rangle \geq\beta\left\Vert Bz_{1}-Bz_{2}\right\Vert ^{2},$$
we obtain $0\geq\beta\left\Vert Bz_{1}-Bz_{2}\right\Vert ^{2}$, that
is $Bz_{1}=Bz_{2}$. 
\end{proof}

\subsection{Convergence of the regularized Newton-like dynamic}\label{asympt-conv}

We will study the convergence properties of the orbits of system (\ref{2})-(\ref{3}),
whose existence is guaranteed by Theorem \ref{basic-exist}. We call
\[
S=\left\{ z\in \mathcal H;\;\partial\Phi(z)+Bz\ni0\right\} 
\]
 the solution set of problem (\ref{basicpb}), and we assume that
$S\neq\emptyset$.
 Let $\left(x\left(\cdot\right),\upsilon\left(\cdot\right)\right): [0,+\infty [ \rightarrow \mathcal H \times \mathcal H$
be the solution of the Cauchy problem (\ref{2})-(\ref{3})-(\ref{4}). Equivalently with $\mu = \frac{1}{\lambda}$
\begin{align}
 & \upsilon\left(t\right)\in\partial\Phi\left(x\left(t\right)\right);\label{200}\\
 & \dot{x}\left(t\right)+ \mu \dot{\upsilon}\left(t\right)+ \mu  \upsilon\left(t\right)+ \mu  B\left(x\left(t\right)\right)=0;\label{300}\\
 & x\left(0\right)=x_{0},\upsilon\left(0\right)=\upsilon_{0}.\label{400}
\end{align}
 We will use the following  functions: for any $z\in S$,
for any $t\geq0$ 
\begin{align*}
 & g_{z}\left(t\right):=\Phi\left(z\right)-\left[\Phi\left(x\left(t\right)\right)+\left\langle z-x\left(t\right),\upsilon\left(t\right)\right\rangle \right]\\
 & \Gamma_{z}\left(t\right):=\frac{1}{2}\left\Vert x\left(t\right)-z\right\Vert ^{2}+\mu g_{z}\left(t\right).
\end{align*}
Note that $\Gamma_{z}(t)$ is a Bregman distance between $x(t)$ and $z$. It is associated with the convex function $x \mapsto \frac{1}{2}\|x\|^2 + \mu\Phi (x)$.
In our nonsmooth setting, it  combines the metric of $\mathcal H$ with the metric associated to the \textquotedblleft Hessian\textquotedblright \ of $\Phi$.
Our proof of the convergence is based on Lyapunov analysis, and the fact that $t \mapsto \Gamma_{z}(t)$ is a decreasing function.
Let us state our main convergence result.

\begin{theorem}\label{conv1} Suppose  $S\neq\emptyset$. Then for
all  $x(\cdot)$ orbit of the system {\rm(\ref{2})-(\ref{3})},  the following convergence properties are satisfied, when t tends to infinity:

\smallskip{}

1. \ $\underset{t\longrightarrow+\infty}{\lim}\left\Vert \upsilon\left(t\right)+B\left(x\left(t\right)\right)\right\Vert =0$;

\smallskip{}

2. \ $B(x(\cdot))$ converges strongly to $Bz$, where $Bz$ is uniquely
defined for $z\in S$.

\smallskip{}

3. \ $\upsilon\left(\cdot \right)$ converges strongly to $-Bz$, where
$Bz$ is uniquely defined for $z\in S$.

\smallskip{}

4. \ $x(\cdot)$ converges weakly to an element of $S$.

\end{theorem}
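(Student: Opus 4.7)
My approach is a Lyapunov analysis built around the Bregman-type functional $\Gamma_z(t)=\frac{1}{2}\|x(t)-z\|^{2}+\mu g_z(t)$ introduced just before the statement, taken at an arbitrary $z\in S$. The first step is a direct derivative computation: using the chain rule $\frac{d}{dt}\Phi(x(t))=\langle v(t),\dot x(t)\rangle$ (valid since $x$ is absolutely continuous and $v(t)\in\partial\Phi(x(t))$ a.e.), one obtains $\frac{d}{dt}g_z(t)=-\langle z-x(t),\dot v(t)\rangle$; substituting the ODE (\ref{300}) yields
\[
\frac{d}{dt}\Gamma_z(t)=\langle x(t)-z,\dot x(t)+\mu\dot v(t)\rangle=-\mu\langle x(t)-z,v(t)+B(x(t))\rangle.
\]
The subgradient inequality applied to $v\in\partial\Phi(x)$ and $-Bz\in\partial\Phi(z)$ gives $\langle x-z,v+Bz\rangle\geq g_z$, while cocoercivity of $B$ gives $\langle x-z,Bx-Bz\rangle\geq\beta\|Bx-Bz\|^{2}$; adding,
\[
\frac{d}{dt}\Gamma_z(t)\leq -\mu g_z(t)-\mu\beta\|B(x(t))-Bz\|^{2}.
\]
Hence $\Gamma_z$ is nonincreasing with a finite limit, $x(\cdot)$ is bounded, and both $g_z$ and $\|B(x(\cdot))-Bz\|^{2}$ lie in $L^{1}(0,+\infty)$; here $Bz$ does not depend on $z\in S$ by Lemma \ref{coco-2}.

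\textbf{From integrability to pointwise limits.} For claim 2, I promote integrability to a pointwise limit via Lipschitz regularity of the flow on the Minty reformulation (\ref{bas3}), whose right-hand side is Lipschitz in $y$. Combining boundedness of $x(\cdot)$ with a Fej\'er-type estimate on (\ref{bas3}) obtained from firm nonexpansiveness of $\mbox{prox}_{\mu\Phi}$ and cocoercivity of $B$, the orbit $y(\cdot)=x(\cdot)+\mu v(\cdot)$ is bounded; hence $\dot y$ is bounded, and since $\mbox{prox}_{\mu\Phi}$ is $1$-Lipschitz, $x=\mbox{prox}_{\mu\Phi}(y)$ is Lipschitz on $[0,+\infty)$. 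Lemma \ref{coco-lip} then makes $B\circ x$ Lipschitz, so $t\mapsto\|B(x(t))-Bz\|^{2}$ is uniformly continuous, and uniform continuity combined with $L^{1}$-integrability forces $\|B(x(t))-Bz\|\to 0$. For claim 1, note that $\dot y=-\mu(v+Bx)$, so the claim is $\|\dot y(t)\|\to 0$. A semigroup-type comparison argument on $\dot y+F(y)=0$, with $F=I-\mbox{prox}_{\mu\Phi}+\mu B\circ\mbox{prox}_{\mu\Phi}$, yields nonincrease in $t$ of $\|y(t+h)-y(t)\|^{2}$ for each $h>0$; letting $h\downarrow 0$, $\|\dot y\|$ is nonincreasing, and together with the Lyapunov integrability this forces $\|\dot y(t)\|\to 0$. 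Claim 3 is then immediate from $v=(v+Bx)-Bx$.

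\textbf{Weak convergence and main obstacle.} For claim 4, I invoke Opial's lemma applied to $x(\cdot)$. Lipschitzness of $g_z$ in $t$ (from the Lipschitz regularity of $x,v$ and the chain rule) combined with $g_z\in L^{1}$ forces $g_z(t)\to 0$, so $\|x(t)-z\|^{2}=2(\Gamma_z(t)-\mu g_z(t))$ has a limit for every $z\in S$. If $x(t_n)\wto x^\infty$ along some $t_n\to+\infty$, the strong convergences $v(t_n)\to -Bz$ (claim 3) and $B(x(t_n))\to Bz$ (claim 2), together with graph-closedness of $\partial\Phi$ and demiclosedness of $B$, give $-Bz\in\partial\Phi(x^\infty)$ and $B(x^\infty)=Bz$, so $-B(x^\infty)\in\partial\Phi(x^\infty)$ and $x^\infty\in S$. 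Opial's lemma then yields weak convergence of $x(\cdot)$ to some element of $S$. \emph{Main obstacle:} claim 1 is the delicate step. The Lyapunov decay only directly bounds $g_z$ and $\|Bx-Bz\|^{2}$ in $L^{1}$, not $\|v+Bx\|^{2}$; extracting $\|v(t)+B(x(t))\|\to 0$ must go through the monotone structure of the reformulated $y$-equation, and the justification of Lipschitz regularity of $x(\cdot)$ (needed already for claim 2) rests on the subtle interplay between firm nonexpansiveness of $\mbox{prox}_{\mu\Phi}$ and cocoercivity of $B$.
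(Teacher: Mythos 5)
Your first stage (monotone decay of $\Gamma_z$, boundedness of $x$, and integrability of $g_z$ and of $\|B(x)-Bz\|^{2}$) is correct, and your refinement $\langle x-z,v+Bz\rangle\geq g_z$, which puts $g_z$ in $L^{1}$ directly, is a nice sharpening of the paper's estimate. The genuine gap is in the second stage, and it sits exactly where you flag the ``main obstacle.'' Your route to claims 1 and 2 rests on two unproved assertions about the $y$-equation $\dot y+F(y)=0$, $F=I-\mbox{prox}_{\mu\Phi}+\mu B\circ\mbox{prox}_{\mu\Phi}$: that $y(\cdot)$ is bounded (via a ``Fej\'er-type estimate'') and that $t\mapsto\|y(t+h)-y(t)\|$ is nonincreasing. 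Both require $F$ to be monotone. Writing $P=\mbox{prox}_{\mu\Phi}$ and combining firm nonexpansiveness of $I-P$ with cocoercivity of $B$ gives
\[
\langle F(y)-F(y'),y-y'\rangle\;\geq\;a^{2}-\mu\,ab+\mu\beta\,b^{2},\qquad a=\|(I-P)y-(I-P)y'\|,\ b=\|B(Py)-B(Py')\|,
\]
which is nonnegative only when $\mu\leq 4\beta$. Theorem \ref{conv1} carries \emph{no} restriction on $\lambda$ (hence none on $\mu=1/\lambda$), so your argument does not cover the stated result; for large $\mu$ the cross term $\mu\langle B(Py)-B(Py'),(I-P)y-(I-P)y'\rangle$ has no sign and the Fej\'er/contraction structure is lost. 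Moreover, even granting monotonicity of $F$, ``$\|\dot y\|$ nonincreasing plus the Lyapunov integrability forces $\|\dot y\|\to 0$'' is circular: the quantities you have in $L^{1}$ are $g_z$ and $\|B(x)-Bz\|^{2}$, not $\|\dot y\|^{2}=\mu^{2}\|v+Bx\|^{2}$, and the latter is precisely what claim 1 asserts tends to zero (a pure rotation shows that a monotone flow with nonempty zero set can keep $\|\dot y\|$ bounded away from $0$).

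The paper closes this gap by a different device that works for every $\mu>0$: substitute $Bx-Bz=-(\lambda\dot x+\dot v+v+Bz)$ into the decay inequality $\frac{d}{dt}\Gamma_z+\mu\beta\|Bx-Bz\|^{2}\leq 0$, expand the square, and absorb the cross terms using the two structural facts $\langle\dot x(t),\dot v(t)\rangle\geq 0$ (monotonicity of $\partial\Phi$ along the orbit, Lemma \ref{pr:basic}) and $\langle\dot x(t),v(t)\rangle=\frac{d}{dt}\Phi(x(t))$ (Lemma \ref{deriv-chain}). This produces the augmented Lyapunov function $G_z=\Gamma_z+2\beta\left[\Phi(x)+\langle x,Bz\rangle\right]$, whose decay yields $\dot x\in L^{2}$, $\dot v+v+Bz\in L^{2}$, and then $\dot v\in L^{2}$, $v\in L^{\infty}$. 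From $v+Bx=-(\lambda\dot x+\dot v)\in L^{2}$ together with the $L^{2}$ bounds on the derivatives, Lemma \ref{lm:aux} gives claims 1 and 2, and also $g_z(t)\to 0$ for Opial's lemma. You need this (or an equivalent unconditional estimate on $\dot x$ and $\dot v$) to complete the proof; the remainder of your argument for claims 3 and 4 is then sound.
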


The proof of Theorem  \ref{conv1} has been extended to the end of this section. We collect first few preliminary technical lemma, then we conduct a Lyapunov-type analysis, and finally prove Theorem
\ref{conv1} and some convergence results which are connected.

\smallskip

\noindent \textbf{A. Preliminary results} \
We will frequently use the
following derivation chain rule for a convex lower semicontinuous
function $\Phi: \mathcal H\to\mathbb{R}\cup\left\{ +\infty\right\} $, see \cite[Lemma 3.3]{Br}.

\begin{lemma}\label{deriv-chain} Suppose that the assumptions i), ii), iii) are satisfied:

$i)$ $v(t)\in\partial\Phi(x(t))$ for almost every $t\in[0,b]$;

$ii)$ $v$ belongs to $L^{2}(0,b; \mathcal H)$;

$iii)$ $\dot{x}\in L^{2}(0,b; \mathcal H)$ .

\noindent Then, $t\mapsto\Phi\left(x\left(t\right)\right)$ is absolutely continuous
on $\left[0,b\right]$, and, for almost every $t\in[0,b]$, 
\begin{equation}
\frac{d}{dt}\Phi\left(x\left(t\right)\right)=\left\langle \upsilon\left(t\right),\dot{x}\left(t\right)\right\rangle .\label{chain2}
\end{equation}
 \end{lemma}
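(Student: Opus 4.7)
The plan is to reduce the statement to the smooth case via Moreau--Yosida regularization of $\Phi$, and then transfer the chain rule to $\Phi$ itself by passage to the limit. For each $\lambda>0$ I would introduce
$$\Phi_\lambda(x) = \min_{y\in\mathcal H}\left\{\Phi(y) + \tfrac{1}{2\lambda}\|x-y\|^{2}\right\},$$
which is of class $C^{1,1}$ with $\nabla\Phi_\lambda(x)=\frac{1}{\lambda}(x-\mbox{prox}_{\lambda\Phi}(x))\in\partial\Phi(\mbox{prox}_{\lambda\Phi}(x))$. Two facts will be crucial: (a) $\Phi_\lambda(x)\nearrow\Phi(x)$ as $\lambda\downarrow 0$; (b) by monotonicity of $\partial\Phi$ applied to $v\in\partial\Phi(x)$ and $\nabla\Phi_\lambda(x)\in\partial\Phi(\mbox{prox}_{\lambda\Phi}(x))$, one has the pointwise bound $\|\nabla\Phi_\lambda(x)\|\leq\|v\|$, together with the a.e.\ convergence $\nabla\Phi_\lambda(x(r))\to v^{0}(r)$, the element of minimal norm of $\partial\Phi(x(r))$, at every $r$ at which $x(r)\in\dom(\partial\Phi)$.

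Because $\Phi_\lambda$ is $C^{1,1}$ and $x(\cdot)$ is absolutely continuous with $\dot{x}\in L^{2}$, the classical smooth chain rule yields $t\mapsto\Phi_\lambda(x(t))$ absolutely continuous on $[0,b]$ and
$$\Phi_\lambda(x(t))-\Phi_\lambda(x(s))=\int_{s}^{t}\langle\nabla\Phi_\lambda(x(r)),\dot{x}(r)\rangle\,dr.$$
By (b) the integrand is dominated by $\|v(r)\|\,\|\dot{x}(r)\|$, which is integrable since $v,\dot{x}\in L^{2}(0,b;\mathcal H)$. Dominated convergence now lets me pass to the limit $\lambda\downarrow 0$: the left-hand side tends to $\Phi(x(t))-\Phi(x(s))$ by (a), and the right-hand side to $\int_{s}^{t}\langle v^{0}(r),\dot{x}(r)\rangle\,dr$ by the a.e.\ convergence in (b). This already establishes absolute continuity of $t\mapsto\Phi(x(t))$ and the integral identity, and hence the pointwise chain rule, for the minimal selection~$v^{0}$.

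It only remains to check that $\langle v^{0}(t),\dot{x}(t)\rangle=\langle v(t),\dot{x}(t)\rangle$ for a.e.\ $t$, regardless of which measurable selection $v$ from $\partial\Phi\circ x$ is chosen. At any $t$ in the full-measure set where $\Phi\circ x$ is differentiable, $x$ is differentiable, and $v(t)\in\partial\Phi(x(t))$, the subgradient inequality
$$\Phi(x(t+h))-\Phi(x(t))\geq\langle v(t),\,x(t+h)-x(t)\rangle,$$
divided by $h>0$ and letting $h\to 0^{+}$, gives $\frac{d}{dt}\Phi(x(t))\geq\langle v(t),\dot{x}(t)\rangle$; dividing by $h<0$ and letting $h\to 0^{-}$ gives the reverse inequality, whence equality. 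The main technical obstacle I expect is the justification of point (b)---both the a.e.\ strong convergence $\nabla\Phi_\lambda(x(r))\to v^{0}(r)$ and the uniform domination $\|\nabla\Phi_\lambda(x(r))\|\leq\|v(r)\|$---which relies on the standard but nontrivial properties of Yosida regularization of a maximal monotone operator: $\mbox{prox}_{\lambda\Phi}(y)\to y$ as $\lambda\downarrow 0$ for $y\in\overline{\dom(\partial\Phi)}$, and the characterization of $\nabla\Phi_\lambda(y)$ as the minimal-norm element of $\partial\Phi(\mbox{prox}_{\lambda\Phi}(y))$.
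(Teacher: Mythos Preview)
The paper does not prove this lemma at all; it simply cites it as \cite[Lemma 3.3]{Br} (Br\'ezis). Your argument via Moreau--Yosida regularization is correct and is in fact essentially the classical proof that appears in Br\'ezis's monograph: approximate $\Phi$ by the smooth $\Phi_\lambda$, apply the $C^{1,1}$ chain rule, use the Yosida bound $\|\nabla\Phi_\lambda(x)\|\leq\|(\partial\Phi)^{0}(x)\|\leq\|v\|$ and the pointwise convergence $\nabla\Phi_\lambda(x)\to(\partial\Phi)^{0}(x)$ on $\dom(\partial\Phi)$ to invoke dominated convergence, and finally recover the formula for an arbitrary selection $v$ by the two-sided subgradient inequality argument you give.

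One small correction: in your closing remarks you describe $\nabla\Phi_\lambda(y)$ as ``the minimal-norm element of $\partial\Phi(\mbox{prox}_{\lambda\Phi}(y))$''. That is not true in general; $\nabla\Phi_\lambda(y)=\tfrac{1}{\lambda}(y-\mbox{prox}_{\lambda\Phi}(y))$ is \emph{an} element of $\partial\Phi(\mbox{prox}_{\lambda\Phi}(y))$, but not necessarily the one of minimal norm. What you actually need (and correctly use in the body of the argument) are the two standard Yosida facts for $x\in\dom(\partial\Phi)$: the uniform bound $\|\nabla\Phi_\lambda(x)\|\leq\|(\partial\Phi)^{0}(x)\|$ and the strong convergence $\nabla\Phi_\lambda(x)\to(\partial\Phi)^{0}(x)$ as $\lambda\downarrow 0$. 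Both follow from the resolvent identity and monotonicity, and neither requires the misstated characterization.
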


\noindent In order to prove the weak convergence of the trajectories
of system (\ref{2})-(\ref{3}), we will use the  Opial's lemma
\cite{Op} that we recall in its continuous form; see also \cite{Bruck},
who initiated the use of this argument to analyze the asymptotic convergence
of nonlinear contraction semigroups in Hilbert spaces.

\begin{lemma}\label{Opial} Let $S$ be a non empty subset of $\mathcal H$
and $x:[0,+\infty[\to \mathcal H$ a map. Assume that 
\begin{eqnarray*}
(i) &  & \mbox{for every }z\in S,\>\lim_{t\to+\infty}\|x(t)-z\|\mbox{ exists};\\
(ii) &  & \mbox{every weak sequential cluster point of the map }x\mbox{ belongs to }S.
\end{eqnarray*}
 Then 
\[
w-\lim_{t\to+\infty}x(t)=x_{\infty}\ \ \mbox{ exists, for some element }x_{\infty}\in S.
\]
 \end{lemma}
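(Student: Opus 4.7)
The plan is to reduce the lemma to two independent facts: nonemptiness of the set of weak sequential cluster points of $x(\cdot)$ at infinity, and uniqueness of such a cluster point. For the first, I would fix any $z\in S$ and use hypothesis (i) to note that $t\mapsto \|x(t)-z\|$, having a finite limit, is bounded. Hence $x(\cdot)$ lies in a bounded subset of $\mathcal{H}$; reflexivity of the Hilbert space then gives weak sequential compactness, so for any sequence $t_n\to+\infty$ one can extract a weakly convergent subsequence. Hypothesis (ii) places its weak limit in $S$, so the set of cluster points is a nonempty subset of $S$.

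The heart of the argument, and the only step with any real content, is uniqueness. Suppose $x_1,x_2\in S$ are two weak cluster points, attained along sequences $t_n\to+\infty$ and $s_n\to+\infty$ with $x(t_n)\wto x_1$ and $x(s_n)\wto x_2$. By (i) the limits $\ell_i:=\lim_{t\to+\infty}\|x(t)-x_i\|^2$ exist for $i=1,2$. I would expand
\[
\|x(t)-x_2\|^2 \;=\; \|x(t)-x_1\|^2 + 2\langle x(t)-x_1,\, x_1-x_2\rangle + \|x_1-x_2\|^2
\]
and pass to the limit in two different ways. Along $t_n$, the weak convergence $x(t_n)\wto x_1$ makes the inner product tend to $0$, yielding $\ell_2=\ell_1+\|x_1-x_2\|^2$. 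Along $s_n$, the weak convergence $x(s_n)\wto x_2$ makes the inner product tend to $\langle x_2-x_1,\,x_1-x_2\rangle=-\|x_1-x_2\|^2$, yielding $\ell_2=\ell_1-\|x_1-x_2\|^2$. Equating the two expressions forces $\|x_1-x_2\|^2=0$, hence $x_1=x_2$.

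To finish, I would use a standard subsequence argument: since $x(\cdot)$ is bounded, every sequence $t_n\to+\infty$ has a weakly convergent subsequence, and all such subsequential weak limits coincide with the common element $x_\infty\in S$ just established. A routine contradiction argument (if $x(t)$ did not converge weakly to $x_\infty$, one could extract a sequence whose weak limit differs from $x_\infty$, contradicting uniqueness) then gives $x(t)\wto x_\infty$ as $t\to+\infty$. I do not anticipate any genuine obstacle; the only place demanding care is the polarization computation and the correct sign bookkeeping when passing to the limit along $s_n$ versus $t_n$, but this is essentially algebraic.
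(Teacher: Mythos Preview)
Your argument is correct and is the standard proof of Opial's lemma. Note, however, that the paper does not actually prove this statement: it is stated as a known tool and attributed to Opial \cite{Op} (with a reference to Bruck \cite{Bruck} for its use in the asymptotic analysis of nonlinear contraction semigroups), so there is no ``paper's own proof'' to compare against. Your write-up would serve perfectly well as a self-contained justification should one be desired.
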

We will also need the
following lemma from \cite{AAS}. 
\begin{lemma} \label{lm:aux} Suppose that
$1\leq p<\infty$, $1\leq r\leq\infty$, $F\in L^{p}([0,\infty[)$
is a locally absolutely continuous nonnegative function, $G\in L^{r}([0,\infty[)$
and for almost all $t$ 
\begin{equation}
\frac{d}{dt}F(t)\leq G(t).\label{eq:lm2}
\end{equation}
 Then $\lim_{t\to\infty}F(t)=0$. 
 \end{lemma}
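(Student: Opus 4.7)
The plan is to argue by contradiction, exploiting the $L^p$-integrability of $F$ to force $F$ to spend only a vanishing amount of time above any positive level, while the differential inequality $\dot F \le G$ prevents $F$ from rising from one fixed level to a strictly higher one over a shrinking interval. Because $F$ is locally absolutely continuous, it is continuous on $[0,\infty)$. I first observe that $\liminf_{t\to\infty}F(t)=0$: otherwise $F\ge c>0$ on some tail $[T,\infty)$, forcing $\int_T^{\infty}F^p=+\infty$ and contradicting $F\in L^p$ with $p<\infty$. Suppose, for contradiction, that $\limsup_{t\to\infty}F(t)=\beta>0$, and fix the two levels $\alpha:=\beta/4$ and $\gamma:=\beta/2$.

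Using the intermediate value property of the continuous function $F$ together with the facts $\liminf F<\alpha$ and $\limsup F>\gamma$, I construct inductively a disjoint sequence of crossing intervals $[s_n,t_n]$, with $s_n\to\infty$, such that $F(s_n)=\alpha$, $F(t_n)=\gamma$, and $F(t)\in[\alpha,\gamma]$ for every $t\in[s_n,t_n]$. Since $F(t)\ge\alpha$ on each such interval,
\[
\alpha^{p}\sum_{n}(t_n-s_n)\;\le\;\sum_{n}\int_{s_n}^{t_n}F(t)^{p}\,dt\;\le\;\int_0^{\infty}F(t)^{p}\,dt\;<\;+\infty,
\]
so in particular $t_n-s_n\to 0$.

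Integrating the inequality $\dot F\le G$ over $[s_n,t_n]$ yields
\[
\gamma-\alpha\;=\;F(t_n)-F(s_n)\;=\;\int_{s_n}^{t_n}\dot F(t)\,dt\;\le\;\int_{s_n}^{t_n}G(t)\,dt.
\]
I then estimate the right-hand side according to the value of $r$. For $1<r<\infty$, Hölder gives $\int_{s_n}^{t_n}G\le(t_n-s_n)^{1-1/r}\|G\|_{L^r}$, which tends to $0$ since $t_n-s_n\to 0$. For $r=\infty$, the bound $\|G\|_{\infty}(t_n-s_n)\to 0$ works directly. For $r=1$, I invoke the absolute continuity of the Lebesgue integral: since $G\in L^1$ and $|t_n-s_n|\to 0$, $\int_{s_n}^{t_n}|G|\to 0$ (equivalently, $\|G\|_{L^1(s_n,\infty)}\to 0$ because $s_n\to\infty$). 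In every case the right-hand side tends to $0$, contradicting the fixed positive gap $\gamma-\alpha$. Hence $\limsup F=0$, and combined with $F\ge 0$ this gives $F(t)\to 0$.

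The main obstacle is the careful construction of the disjoint crossing intervals so that their $L^p$-mass bound forces $t_n-s_n\to 0$; once that length-decay is secured, the three cases $r=1$, $1<r<\infty$, $r=\infty$ of Hölder's inequality are routine and all yield the same contradiction.
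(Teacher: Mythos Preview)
Your proof is correct. Note, however, that the paper does not actually prove this lemma: it is quoted without proof from \cite{AAS}, so there is no in-paper argument to compare against.

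On the substance: the strategy you use---first $\liminf F=0$ from $F\in L^{p}$, then a contradiction via crossing intervals $[s_n,t_n]$ whose lengths are summable (hence $t_n-s_n\to 0$), combined with $F(t_n)-F(s_n)\le\int_{s_n}^{t_n}G$---is a standard and clean way to establish the result. The case split on $r$ is handled properly: H\"older when $1<r\le\infty$, and absolute continuity of the Lebesgue integral (or, equivalently, the vanishing $L^{1}$-tail since $s_n\to\infty$) when $r=1$. One cosmetic point: before applying H\"older you are implicitly using $\int_{s_n}^{t_n}G\le\int_{s_n}^{t_n}|G|$, since $G$ is not assumed nonnegative; making this explicit would tighten the write-up, but it does not affect correctness.
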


\smallskip

\noindent \textbf{B. Lyapunov analysis} \
As a main ingredient of our convergence proof, we are going to show that $\Gamma_{z}$
is a strict Lyapunov function. More precisely, 
\begin{proposition}\label{estim-x}
Suppose that $S\neq\emptyset$. Then, for any $z\in S,$ $\Gamma_{z}$
is a decreasing nonnegative function, and hence  $\lim_{t\rightarrow+\infty}\Gamma_{z}(t)$ exists. Moreover

\vspace{1mm}

$1.\ \left\Vert B\left(x\right)-B\left(z\right)\right\Vert \in L^{2}\left(\left[0,+\infty\right[\right)$;

\vspace{1mm}

$2.\ \ x\ \mbox{is bounded}$;

\vspace{1mm}

$3.\ \left\Vert \dot{x}\right\Vert \in L^{2}\left(\left[0,+\infty\right[\right)$;

\vspace{1mm}

$4.\ \left\Vert \dot{\upsilon}+\upsilon+B\left(z\right)\right\Vert \in L^{2}\left(\left[0,+\infty\right[\right)$;

\vspace{1mm}

$5.\ \ \left\Vert \dot{\upsilon}\right\Vert \in L^{2}\left(\left[0,+\infty\right[\right)$;

\vspace{1mm}

$6.\ \ \left\Vert v\right\Vert \in L^{\infty}\left(\left[0,+\infty\right[\right)$.

\vspace{1mm}

$7.\ \lim_{t\rightarrow+\infty}\left(\Phi\left(x\left(t\right)\right)+\left\langle x\left(t\right),B\left(z\right)\right\rangle \right)\ \ \mbox{exists}$.
\end{proposition}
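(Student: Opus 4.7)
\emph{Step 1 (The Lyapunov inequality for $\Gamma_z$).} First I would check that $\Gamma_z \geq 0$: this is immediate from the subgradient inequality, since $\upsilon(t)\in\partial\Phi(x(t))$ gives $g_z(t) \geq 0$. For the derivative, Lemma \ref{deriv-chain} applied to $\Phi$ along the orbit yields $\frac{d}{dt}\Phi(x(t)) = \langle \upsilon,\dot{x}\rangle$, and a direct computation then gives $\frac{d}{dt}g_z = \langle x-z,\dot{\upsilon}\rangle$, so that
\[
\frac{d}{dt}\Gamma_z(t) \;=\; \langle x-z,\,\dot{x}+\mu\dot{\upsilon}\rangle \;=\; -\mu\langle x-z,\,\upsilon + B(x)\rangle
\]
by (\ref{300}). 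Splitting $\upsilon+B(x)=(\upsilon+Bz)+(B(x)-Bz)$, the monotonicity of $\partial\Phi$ at the pairs $(x,\upsilon)$ and $(z,-Bz)$ gives $\langle x-z,\upsilon+Bz\rangle\geq 0$, while cocoercivity gives $\langle x-z,B(x)-Bz\rangle\geq \beta\|B(x)-Bz\|^2$. Hence $\frac{d}{dt}\Gamma_z(t)\leq -\mu\beta\|B(x(t))-Bz\|^2\leq 0$, so $\Gamma_z$ is nonnegative and decreasing, $\lim_{t\to\infty}\Gamma_z(t)$ exists, integration yields property~1, and $\|x(t)-z\|^2/2\leq \Gamma_z(0)$ yields property~2.

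\emph{Step 2 (A second Lyapunov function for the $\upsilon$-estimates).} The inequality above controls $B(x)-Bz$ but neither $\upsilon+Bz$ nor $\dot{x}$, so I introduce
\[
W(t) \;:=\; \bigl[\Phi(x(t))+\langle x(t),Bz\rangle\bigr] \;-\; \bigl[\Phi(z)+\langle z,Bz\rangle\bigr] \;+\; \tfrac{\mu}{2}\|\upsilon(t)+Bz\|^2.
\]
Since $-Bz\in\partial\Phi(z)$, the point $z$ minimises $\Phi+\langle \cdot,Bz\rangle$, so the primal-gap bracket is $\geq 0$ and $W\geq 0$. The chain rule together with (\ref{300}) gives
\[
\frac{dW}{dt} \;=\; \langle \upsilon+Bz,\,\dot{x}+\mu\dot{\upsilon}\rangle \;=\; -\mu\|\upsilon+Bz\|^2 - \mu\langle \upsilon+Bz,\,B(x)-Bz\rangle.
\]
Young's inequality on the cross term yields $\frac{dW}{dt}+\tfrac{\mu}{2}\|\upsilon+Bz\|^2 \leq \tfrac{\mu}{2}\|B(x)-Bz\|^2$; integrating and invoking property~1 shows that $W$ stays bounded on $[0,+\infty[$ and that $\|\upsilon+Bz\|\in L^2$.

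\emph{Step 3 (Extraction of the remaining properties).} Boundedness of $W$ gives boundedness of $\|\upsilon+Bz\|$, whence property~6. From the equivalent formulation (\ref{bas3}), $\|\dot{y}\|=\mu\|\upsilon+B(x)\|\leq \mu\|\upsilon+Bz\|+\mu\|B(x)-Bz\|$ lies in $L^2$; firm nonexpansivity of $\mbox{prox}_{\mu\Phi}$ applied to $y(t+h)$ and $y(t)$, and passed to the limit $h\to 0$, yields $\|\dot{x}(t)\|\leq \|\dot{y}(t)\|$ a.e., giving property~3. The identity $\mu\dot{\upsilon}=\dot{y}-\dot{x}$ then gives property~5, and rewriting (\ref{300}) as $\dot{\upsilon}+\upsilon+Bz = -\dot{x}/\mu - (B(x)-Bz)$ gives property~4. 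For property~7, the representation of $\frac{dW}{dt}$ together with Cauchy--Schwarz on the $L^2$ factors shows $\frac{dW}{dt}\in L^1$, so $\lim W$ exists; similarly $\frac{d}{dt}\|\upsilon+Bz\|^2 = 2\langle \upsilon+Bz,\dot{\upsilon}\rangle$ lies in $L^1$, so $\lim\|\upsilon+Bz\|^2$ exists; subtracting produces the limit of $\Phi(x(t))+\langle x(t),Bz\rangle$.

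\emph{Main obstacle.} The delicate point is that the natural Bregman-type Lyapunov $\Gamma_z$ dissipates only through $\|B(x)-Bz\|^2$ and gives no direct grip on the subdifferential residual $\upsilon+Bz$. The crucial trick is the secondary energy $W$, which pairs the primal gap of $\Phi+\langle\cdot,Bz\rangle$ with $\tfrac{\mu}{2}\|\upsilon+Bz\|^2$; after Young's inequality the \emph{only} error term is $\|B(x)-Bz\|^2$, which is precisely what property~1 allows us to absorb. Once $\|\upsilon+Bz\|\in L^2$ is secured, all other claims follow mechanically from the firm nonexpansivity of $\mbox{prox}_{\mu\Phi}$ and from (\ref{300}).
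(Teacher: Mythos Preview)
Your proof is correct, but the route differs from the paper's. After Step~1 (which matches the paper), the paper does not introduce a new Lyapunov function; instead it substitutes $B(x)-Bz=-\lambda\dot x-\dot\upsilon-\upsilon-Bz$ back into the dissipation inequality $\frac{d}{dt}\Gamma_z+\mu\beta\|B(x)-Bz\|^2\le 0$, expands the square, and uses the monotonicity estimate $\langle\dot x,\dot\upsilon\rangle\ge 0$ (Lemma~\ref{pr:basic}) together with the chain rule to obtain
\[
\frac{d}{dt}\bigl[\Gamma_z+2\beta(\Phi(x)+\langle x,Bz\rangle)\bigr]+\beta\lambda\|\dot x\|^2+\mu\beta\|\dot\upsilon+\upsilon+Bz\|^2\le 0,
\]
from which items~3, 4, and 7 fall out at once; items~5 and 6 then follow by expanding $\|\dot\upsilon+\upsilon+Bz\|^2$. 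Your approach instead builds a separate energy $W$ that isolates $\|\upsilon+Bz\|^2$, uses Young's inequality (rather than $\langle\dot x,\dot\upsilon\rangle\ge 0$) to close the estimate, and then recovers $\|\dot x\|\in L^2$ via the $y$-formulation and the nonexpansivity of $\mbox{prox}_{\mu\Phi}$. The two key tools are in fact cousins: since $y=x+\mu\upsilon$, the inequality $\|\dot x\|\le\|\dot y\|$ is equivalent to $2\mu\langle\dot x,\dot\upsilon\rangle+\mu^2\|\dot\upsilon\|^2\ge 0$, which is implied by Lemma~\ref{pr:basic}. Your argument is a bit more modular (the $B$-dissipation and the $\upsilon$-dissipation are decoupled), while the paper's gets items~3, 4, 7 simultaneously from a single inequality without passing through the auxiliary variable $y$.
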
 

In order to prove Proposition \ref{estim-x}, let us first establish some technical results.

 \begin{lemma}\label{g-prop} For any $t\geq0$ and $z\in S$ 
\[
g_{z}\left(t\right)\geq0
\]
 and for almost all $t\geq0$ 
\[
\frac{d}{dt}g_{z}\left(t\right)=\left\langle x\left(t\right)-z,\dot{\upsilon}\left(t\right)\right\rangle. 
\]
 \end{lemma}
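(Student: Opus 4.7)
\textbf{Proof plan for Lemma \ref{g-prop}.}

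The nonnegativity of $g_z$ is immediate from the defining property of the subdifferential. Since $v(t)\in\partial\Phi(x(t))$ for every $t\geq 0$ (property (ii) in the definition of strong solution), the convex subgradient inequality applied at $x(t)$ in direction $z$ gives
\[
\Phi(z)\;\geq\;\Phi(x(t))+\langle z-x(t),v(t)\rangle,
\]
which is exactly $g_z(t)\geq 0$. No growth condition on $\Phi$ is needed here.

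For the derivative formula, I would split $g_z$ into three pieces and differentiate each. The map $t\mapsto\Phi(x(t))$ is absolutely continuous on each bounded interval and satisfies $\frac{d}{dt}\Phi(x(t))=\langle v(t),\dot x(t)\rangle$ a.e., by the derivation chain rule of Lemma \ref{deriv-chain}; its hypotheses are satisfied because, by Theorem \ref{basic-exist} and the definition of strong solution, $x$ and $v$ are absolutely continuous on $[0,b]$ with $\dot x,\dot v\in L^2(0,b;\mathcal H)$ (in particular $v\in L^2(0,b;\mathcal H)$) and $v(t)\in\partial\Phi(x(t))$ holds pointwise. For the cross term, the absolute continuity of $x$ and $v$ and the product rule in Hilbert space give, for a.e.\ $t$,
\[
\frac{d}{dt}\langle z-x(t),v(t)\rangle \;=\; -\langle \dot x(t),v(t)\rangle + \langle z-x(t),\dot v(t)\rangle.
\]
Adding everything up,
\[
\frac{d}{dt}g_z(t) \;=\; -\langle v(t),\dot x(t)\rangle + \langle \dot x(t),v(t)\rangle - \langle z-x(t),\dot v(t)\rangle \;=\; \langle x(t)-z,\dot v(t)\rangle,
\]
which is the claimed identity.

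There is essentially no obstacle: the only point requiring care is to check that Lemma \ref{deriv-chain} applies, and this is guaranteed by the regularity built into the notion of strong solution (Definition following Theorem \ref{basic-exist}). Everything else is the subgradient inequality and the product rule.
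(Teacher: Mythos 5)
Your proposal is correct and follows essentially the same route as the paper: the subdifferential inequality for the nonnegativity, then Lemma \ref{deriv-chain} for $\frac{d}{dt}\Phi(x(t))=\langle v(t),\dot x(t)\rangle$ plus the product rule on $\langle z-x(t),v(t)\rangle$, with the two $\langle \dot x,v\rangle$ terms cancelling. The only difference is that you explicitly pause to verify the hypotheses of Lemma \ref{deriv-chain} (note that absolute continuity alone only gives $L^1$ derivatives; the $L^2$ integrability on $[0,b]$ really comes from the equivalent Lipschitz formulation (\ref{bas3}), which makes $x$, $v$ locally Lipschitz), a point the paper leaves implicit.
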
 
 \begin{proof} The first inequality $g_{z}\left(t\right)\geq0$,
follows from the subdifferential inequality for $\Phi$ at $x\left(t\right)$,
and $\upsilon\left(t\right)\in\partial\Phi\left(x\left(t\right)\right)$.
By the derivation chain rule in the nonsmooth convex case, see Lemma \ref{deriv-chain}, and $\upsilon(t)\in\partial\Phi\left(x(t)\right)$,
we have $\frac{d}{dt}\Phi\left(x\left(t\right)\right)=\left\langle \upsilon\left(t\right),\dot{x}\left(t\right)\right\rangle $.
Hence 
\begin{align*}
\frac{d}{dt}g_{z}\left(t\right) & =-\frac{d}{dt}\Phi\left(x\left(t\right)\right)+\left\langle \dot{x}\left(t\right),\upsilon\left(t\right)\right\rangle +\left\langle x\left(t\right)-z,\dot{\upsilon}\left(t\right)\right\rangle \\
 & =-\left\langle \dot{x}\left(t\right),\upsilon\left(t\right)\right\rangle +\left\langle \dot{x}\left(t\right),\upsilon\left(t\right)\right\rangle +\left\langle x\left(t\right)-z,\dot{\upsilon}\left(t\right)\right\rangle \\
 & =\left\langle x\left(t\right)-z,\dot{\upsilon}\left(t\right)\right\rangle .
\end{align*}
 \end{proof} 
 The following result from \cite{AS} will also be
useful. 
\begin{lemma} \label{pr:basic} 
For almost every $t>0$ the
following properties hold: 
\begin{equation}
\left\langle \dot{x}(t),\dot{v}(t)\right\rangle \geq0.\label{eq:ineq.bas}
\end{equation}
 \end{lemma}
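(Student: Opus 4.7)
The plan is to exploit the monotonicity of $\partial\Phi$ together with the almost-everywhere differentiability granted by the strong-solution framework. Since $(x(\cdot),v(\cdot))$ is a strong solution in the sense of the definition above, both $x$ and $v$ are absolutely continuous on every bounded interval, hence differentiable almost everywhere; moreover $v(t)\in\partial\Phi(x(t))$ for \emph{every} $t\geq 0$. Let $t_0>0$ be a point at which both $x$ and $v$ are differentiable; the set of such $t_0$ has full measure in $(0,+\infty)$, so it suffices to prove the inequality there.

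At such a point, for every sufficiently small $h\neq 0$ we have $v(t_0+h)\in\partial\Phi(x(t_0+h))$ and $v(t_0)\in\partial\Phi(x(t_0))$. The monotonicity of the maximal monotone operator $\partial\Phi$ yields
\begin{equation*}
\bigl\langle v(t_0+h)-v(t_0),\,x(t_0+h)-x(t_0)\bigr\rangle \;\geq\; 0.
\end{equation*}
Dividing by $h^{2}>0$ and letting $h\to 0$, the difference quotients $\frac{x(t_0+h)-x(t_0)}{h}$ and $\frac{v(t_0+h)-v(t_0)}{h}$ converge in $\mathcal H$ to $\dot{x}(t_0)$ and $\dot{v}(t_0)$ respectively, and passing to the limit in the bilinear form gives
\begin{equation*}
\bigl\langle \dot{x}(t_0),\,\dot{v}(t_0)\bigr\rangle \;\geq\; 0,
\end{equation*}
which is the claim.

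The only delicate point is to ensure that we may indeed take the limit of the bilinear pairing; this is harmless because strong convergence of each factor in $\mathcal H$ implies convergence of their inner product. A subtler variant one might worry about is whether the set of full measure on which $x$ is differentiable coincides with that for $v$, but since a countable intersection of full-measure sets has full measure, this causes no difficulty. No smoothness of $\Phi$ is used anywhere, only the monotonicity of its subdifferential, so the argument applies in full generality of the nonsmooth convex setting.
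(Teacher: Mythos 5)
Your argument is correct and is essentially identical to the paper's proof: both exploit the monotonicity of $\partial\Phi$ applied to the pairs $(x(t+h),v(t+h))$ and $(x(t),v(t))$, divide by $h^{2}$, and pass to the limit at a point of common differentiability. The extra remarks on full-measure sets and on the continuity of the inner product are harmless elaborations of the same reasoning.
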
 
 \begin{proof} For almost every $t>0,$ \ $\dot{x}(t)$
and $\dot{v}(t)$ are well defined, thus 
\[
\left\langle \dot{x}(t),\dot{v}(t)\right\rangle =\lim_{h\to0}\ \frac{1}{h^{2}}\left\langle x(t+h)-x(t),v(t+h)-v(t)\right\rangle .
\]
 By equation (\ref{3}), we have $v(t)\in\partial\Phi(x(t)$. Since
$\partial\Phi: \mathcal H\to2^{\mathcal H}$ is monotone 
\[
\left\langle x(t+h)-x(t),v(t+h)-v(t)\right\rangle \geq0.
\]
 Dividing by $h^{2}$, and passing to the limit preserves the inequality,
which yields \eqref{eq:ineq.bas}. 
\end{proof}

We can now proceed with the proof of Proposition \ref{estim-x}.
 
\begin{proof} By definition of $\Gamma_{z}$, and Lemma \ref{g-prop} 
\begin{align}
\frac{d}{dt}\Gamma_{z}\left(t\right) & =\left\langle \dot{x}\left(t\right),x\left(t\right)-z\right\rangle +\mu\left\langle \dot{\upsilon}(t),x\left(t\right)-z\right\rangle \nonumber \\
 & =\left\langle x\left(t\right)-z,\dot{x}(t)+\mu\dot{\upsilon}(t)\right\rangle. \label{5}
\end{align}
 From (\ref{300}) and (\ref{5}) we deduce that 
\begin{equation}
\frac{d}{dt}\Gamma_{z}\left(t\right)+\mu\left\langle x\left(t\right)-z,\upsilon(t)+B\left(x\left(t\right)\right)\right\rangle =0.\label{gamma1}
\end{equation}
 Since $z\in S$, we have $\partial\Phi\left(z\right)+B\left(z\right)\ni0$.
Equivalently, there exists some $\xi\in\partial\Phi(z)$ such that
$\xi+Bz=0$. By monotonicity of $\partial\Phi$, and $\upsilon(t)\in\partial\Phi(x(t))$
we have 
\begin{equation}
\left\langle x\left(t\right)-z,\upsilon(t)-\xi\right\rangle \geq0.\label{gamma2}
\end{equation}
 Let us rewrite (\ref{gamma1}) as 
\[
\frac{d}{dt}\Gamma_{z}\left(t\right)+\mu\left\langle x(t)-z,\upsilon(t)-\xi\right\rangle +\mu\left\langle x(t)-z,\xi+B(x(t))\right\rangle =0,
\]
 which, from (\ref{gamma2}), gives 
\[
\frac{d}{dt}\Gamma_{z}\left(t\right)+\mu\left\langle x(t)-z,\xi+B(x(t))\right\rangle \leq0.
\]
 From $\xi+Bz=0$, we deduce that 
\[
\frac{d}{dt}\Gamma_{z}\left(t\right)+\mu\left\langle x(t)-z,B(x(t))-Bz\right\rangle \leq0.
\]
 By the cocoercive property of $B$ we infer 
\begin{equation}
\frac{d}{dt}\Gamma_{z}\left(t\right)+\mu\beta\left\Vert B\left(x\left(t\right)\right)-B\left(z\right)\right\Vert ^{2}\leq0.\label{gamma3}
\end{equation}
 From (\ref{gamma3}), we readily obtain that $\Gamma_{z}$ is a decreasing
function. Being nonnegative, it converges to a finite value. By integration
of the above inequality, and using that $\Gamma_{z}$ is nonnegative
we obtain 
\[
\int_{0}^{\infty}{\left\Vert B\left(x(t)\right)-B\left(z\right)\right\Vert }^{2}dt<+\infty,
\]
 that's item 1. Since $g_{z}$ is nonnegative, and $\Gamma_{z}$ is
bounded from above, we deduce from the definition of $\Gamma_{z}$
that $\left\Vert x\left(t\right)-z\right\Vert ^{2}$ is bounded, which
implies that the orbit $x$ is bounded, that's item 2.\\
 To prove item 3., we return to (\ref{gamma3}), and combine it with
(\ref{3}), $\lambda\dot{x}\left(t\right)+\dot{\upsilon}\left(t\right)+\upsilon\left(t\right)+B\left(x\left(t\right)\right)=0$,
to obtain 
\[
\frac{d}{dt}\Gamma_{z}\left(t\right)+\mu\beta\left\Vert \lambda\dot{x}\left(t\right)+\dot{\upsilon}\left(t\right)+\upsilon\left(t\right)+B\left(z\right)\right\Vert ^{2}\leq0.
\]
 After developing, we obtain 
\begin{equation}
\frac{d}{dt}\Gamma_{z}\left(t\right)+\mu\beta\lambda^{2}\left\Vert \dot{x}\left(t\right)\right\Vert ^{2}+\mu\beta\left\Vert \dot{\upsilon}\left(t\right)+\upsilon\left(t\right)+B\left(z\right)\right\Vert ^{2}+2\mu\beta\lambda\left\langle \dot{x}\left(t\right),\dot{\upsilon}\left(t\right)+\upsilon\left(t\right)+B\left(z\right)\right\rangle \leq0.\label{gamma4}
\end{equation}
 Examine the last term of the left member. We have

\smallskip{}

$\left\langle \dot{x}\left(t\right),\dot{\upsilon}\left(t\right)\right\rangle \geq0$
\
by Lemma \ref{pr:basic}.

\smallskip{}

$\left\langle \dot{x}\left(t\right),\upsilon\left(t\right)\right\rangle =\frac{d}{dt}\Phi\left(x\left(t\right)\right)$
by Lemma \ref{deriv-chain}.

\smallskip{}

$\left\langle \dot{x}\left(t\right),B\left(z\right)\right\rangle =\frac{d}{dt}\left\langle x\left(t\right),B\left(z\right)\right\rangle $.

\smallskip{}

\noindent Combining (\ref{gamma4}), $\mu \lambda =1$,  and the above formulas we obtain
\begin{equation}
\frac{d}{dt}\left[\Gamma_{z}\left(t\right)+2\beta\left(\Phi\left(x\left(t\right)\right)+\left\langle x\left(t\right),B\left(z\right)\right\rangle \right)\right]+\beta\lambda\left\Vert \dot{x}\left(t\right)\right\Vert ^{2}+\mu\beta\left\Vert \dot{\upsilon}\left(t\right)+\upsilon\left(t\right)+B\left(z\right)\right\Vert ^{2}\leq0.\label{gamma5}
\end{equation}
 From this, we directly obtain that 
\[
G_{z}(t):=\Gamma_{z}\left(t\right)+2\beta\left[\Phi\left(x\left(t\right)\right)+\left\langle x\left(t\right),B\left(z\right)\right\rangle \right]
\]
 is a decreasing function. Since the orbit $x$ is bounded, it follows
that $G_{z}$ is bounded from below (use that $\Gamma_{z}$ is nonnegative,
and $\Phi$ admits a continuous affine minorant). Hence $\lim_{t\rightarrow+\infty}G_{z}(t)$
exists. Since $\lim_{t\rightarrow+\infty}\Gamma_{z}\left(t\right)$
also exists, we infer 
\[
\lim_{t\rightarrow+\infty}\left(\Phi\left(x\left(t\right)\right)+\left\langle x\left(t\right),B\left(z\right)\right\rangle \right)\ \ \mbox{exists}.
\]
 By integration of (\ref{gamma5}), and using that $G_{z}$
is bounded from below, we obtain 
\begin{align}
 & \int_{0}^{+\infty}\left\Vert \dot{x}\left(t\right)\right\Vert ^{2}dt<+\infty\label{7}\\
 & \int_{0}^{+\infty}\left\Vert \dot{\upsilon}\left(t\right)+\upsilon\left(t\right)+B\left(z\right)\right\Vert ^{2}dt<+\infty\label{8}.
\end{align}

Let us now establish estimations on $\dot{\upsilon}$. 
Let us start from (\ref{8}), and develop
it. Equivalently, there exists some positive constant $M$ such that
for any $0<T<\infty$ 
\[
\int_{0}^{T}\left(\left\Vert \dot{\upsilon}\left(t\right)\right\Vert ^{2}+\left\Vert \upsilon\left(t\right)+B\left(z\right)\right\Vert ^{2}+2\left\langle \dot{\upsilon}\left(t\right),\upsilon\left(t\right)+B\left(z\right)\right\rangle \right)dt\leq M.
\]
 From 
\[
2\left\langle \dot{\upsilon}\left(t\right),\upsilon\left(t\right)+B\left(z\right)\right\rangle =2\left\langle \dot{\upsilon}\left(t\right),\upsilon\left(t\right)\right\rangle +2\left\langle \dot{\upsilon}\left(t\right),B\left(z\right)\right\rangle =\frac{d}{dt}\left(\left\Vert \upsilon\left(t\right)\right\Vert ^{2}+2\left\langle \upsilon\left(t\right),B\left(z\right)\right\rangle \right)
\]
 we deduce that 
\[
\int_{0}^{T}\left\Vert \dot{\upsilon}\left(t\right)\right\Vert ^{2}dt+\left\Vert \upsilon\left(T\right)\right\Vert ^{2}+2\left\langle \upsilon\left(T\right),B\left(z\right)\right\rangle \leq\left\Vert \upsilon_{0}\right\Vert ^{2}+2\left\langle \upsilon_{0},B\left(z\right)\right\rangle +M.
\]
 This being valid for any $0<T<\infty$, we immediately obtain 
\[
\int_{0}^{+\infty}\left\Vert \dot{\upsilon}\left(t\right)\right\Vert ^{2}dt<+\infty
\]
 and 
\[
\left\Vert v\right\Vert \in L^{\infty}\left(\left[0,+\infty\right[\right),
\]
which completes the proof of Proposition \ref{estim-x}. 
 \end{proof} 
 
 \smallskip

\noindent \textbf{C. Proof of convergence} \
1. By Proposition \ref{estim-x} item 3 and  5, we have
$\dot{x}\in L^{2}\left(\left[0,+\infty\right[\right)$ and $\dot{\upsilon}\in L^{2}\left(\left[0,+\infty\right[\right)$.
Hence 
\begin{equation}
\lambda\dot{x}\left(\cdot\right)+\dot{\upsilon}\left(\cdot\right)\in L^{2}\left(\left[0,+\infty\right[\right).\label{conv2}
\end{equation}
 By combining (\ref{3}), $\lambda\dot{x}\left(t\right)+\dot{\upsilon}\left(t\right)+\upsilon\left(t\right)+B\left(x\left(t\right)\right)=0$,
with (\ref{conv2}) we obtain 
\begin{equation}
\upsilon+B(x)\in L^{2}\left(\left[0,+\infty\right[\right).\label{conv3}
\end{equation}
 Let us apply Lemma \ref{lm:aux} with $F(t)=\frac{1}{2}\left\Vert \upsilon(t)+B(x(t))\right\Vert ^{2}$.
By (\ref{conv3}) we have $F\in L^{1}([0,+\infty[)$.\\
 Let us show that $\frac{d}{dt}F\in L^{1}([0,+\infty[)$. Indeed, it
follows easily from the Lipschitz property of $B$ that $B(x)$ is
absolutely continuous on any bounded set, and that for almost all
$t>0$ 
\begin{equation}
\left\Vert \frac{d}{dt}B(x(t))\right\Vert \leq\frac{1}{\beta}\left\Vert \dot{x}(t)\right\Vert .\label{conv4}
\end{equation}
 From 
\begin{equation}
\frac{d}{dt}F(t)=\left\langle \upsilon(t)+B\left(x(t)\right),\dot{\upsilon}\left(t\right)+\frac{d}{dt}B(x(t))\right\rangle \label{conv5}
\end{equation}
 by Cauchy-Schwarz inequality and (\ref{conv4}), we deduce that 
\begin{equation}
|\frac{d}{dt}F(t)|\leq\left\Vert \upsilon(t)+B\left(x(t)\right)\right\Vert \left(\left\Vert \dot{\upsilon}\left(t\right)\right\Vert +\frac{1}{\beta}\left\Vert \dot{x}(t)\right\Vert \right).\label{conv6}
\end{equation}
 Since $\upsilon+B(x)$, $\dot{x}$ and $\dot{\upsilon}$ belong to
$L^{2}\left(\left[0,+\infty\right[\right)$, we obtain  $\frac{d}{dt}F\in L^{1}([0,+\infty[)$.
By applying Lemma \ref{lm:aux}, we obtain $\lim_{t\to+\infty}F(t)=0$,
which proves the first item of Theorem \ref{conv1}.

\smallskip{}

2. and 3. Let us apply Lemma \ref{lm:aux} with $F_{1}(t)=\frac{1}{2}\left\Vert B(x(t))-Bz\right\Vert ^{2}$.
By Proposition \ref{estim-x}, item 1, we have $F_{1}\in L^{1}([0,+\infty[)$.
Moreover, by using (\ref{conv4}), and a similar argument as above,
we have 
\begin{equation}
|\frac{d}{dt}F_{1}(t)|\leq\frac{1}{\beta}\left\Vert \dot{x}(t)\right\Vert \times\left\Vert B(x(t))-Bz\right\Vert .\label{conv7}
\end{equation}
 Combining $\left\Vert \dot{x}\right\Vert \in L^{2}\left(\left[0,+\infty\right[\right)$
with $\left\Vert B(x)-Bz\right\Vert \in L^{2}\left(\left[0,+\infty\right[\right)$
we deduce from (\ref{conv7}) that $\frac{d}{dt}F_{1}\in L^{1}([0,+\infty[)$.
By Lemma \ref{lm:aux}, we obtain $\lim F_{1}(t)=0$, which is our
claim. Item 3. is a straight consequence of the two previous items.

\smallskip{}

4. Let us verify that the conditions of Opial's lemma \ref{Opial}
are satisfied, taking $S$ equal to the solution set of (\ref{basicpb}).\\
 (i) Let $\bar{x}$ be a weak sequential cluster point of $x$, i.e.,
$\bar{x}=w-\lim x(t_{n})$ for some sequence $t_{n}\rightarrow+\infty$.
The operator $A=\partial\Phi+B$ is maximal monotone, and hence is
demi-closed. From $\upsilon\left(t_{n}\right)+B\left(x\left(t_{n}\right)\right)\rightarrow0$
strongly, $x(t_{n})\rightharpoonup\bar{x}$ weakly, and $\upsilon\left(t_{n}\right)+B\left(x\left(t_{n}\right)\right)\in A(x\left(t_{n}\right))$,
we deduce that $A(\bar{x})=\partial\Phi(\bar{x})+B(\bar{x})\ni0$,
that is $\bar{x}\in S$.\\
 (ii) Let us recall the definition of \\
 $\Gamma_{z}\left(t\right):=\frac{1}{2}\left\Vert x\left(t\right)-z\right\Vert ^{2}+\mu g_{z}\left(t\right)$\\
 $g_{z}\left(t\right):=\Phi\left(z\right)-\left[\Phi\left(x\left(t\right)\right)+\left\langle z-x\left(t\right),\upsilon\left(t\right)\right\rangle \right]$.
By Proposition \ref{estim-x}, for any $z\in S,$ $\Gamma_{z}$ is
a decreasing nonnegative function, and hence converges. Hence, in
order to prove that $\lim_{t\to+\infty}\|x(t)-z\|$ exists, is equivalent
to prove that $\lim_{t\to+\infty}g_{z}\left(t\right)$ exists. To
prove this, we use Lemma \ref{lm:aux} with $F(t)=g_{z}$. \\
 By Lemma \ref{g-prop}, for almost all $t\geq0$ 
\[
\frac{d}{dt}g_{z}\left(t\right)=\left\langle x\left(t\right)-z,\dot{\upsilon}\left(t\right)\right\rangle. 
\]
 Hence 
\begin{equation}
|\frac{d}{dt}g_{z}(t)|\leq\left\Vert \dot{\upsilon}\left(t\right)\right\Vert \times\left\Vert x\left(t\right)-z\right\Vert .\label{conv8}
\end{equation}
 By Proposition \ref{estim-x} item  5, $\left\Vert \dot{\upsilon}\right\Vert \in L^{2}\left(\left[0,+\infty\right[\right)$,
and by Proposition \ref{estim-x} item 2., $x$ is bounded. Hence
$\frac{d}{dt}g_{z}\in L^{2}\left(\left[0,+\infty\right[\right)$.
\smallskip{}
Let us now prove that $g_{z}\in L^{2}\left(\left[0,+\infty\right[\right)$.
Since $g_{z}$ is nonnegative, we just need to majorize it by a square
integrable function. By the convex subdifferential inequality, and
$-Bz\in\partial\Phi(z)$ we have 
\[
\Phi\left(x\left(t\right)\right)\geq\Phi\left(z\right)-\left\langle Bz,x\left(t\right)-z\right\rangle.
\]
 Equivalently 
\[
\Phi\left(z\right)-\Phi\left(x\left(t\right)\right)\leq\left\langle Bz,x\left(t\right)-z\right\rangle .
\]
 Combining this inequality with the definition of $g_{z}$, we obtain
\[
0\leq g_{z}\left(t\right)\leq\left\langle Bz,x\left(t\right)-z\right\rangle -\left\langle z-x\left(t\right),\upsilon\left(t\right)\right\rangle .
\]
 Equivalently 
\[
0\leq g_{z}\left(t\right)\leq\left\langle Bz+\upsilon\left(t\right),x\left(t\right)-z\right\rangle .
\]
 By Cauchy-Schwarz inequality and the triangle inequality we deduce
that 
\begin{equation}
0\leq g_{z}\left(t\right)\leq\left(\left\Vert Bz-Bx(t)\right\Vert +\left\Vert Bx(t)+\upsilon\left(t\right)\right\Vert \right)\left\Vert x\left(t\right)-z\right\Vert .\label{conv9}
\end{equation}
 By Proposition \ref{estim-x} item 1, we have $\left\Vert B\left(x\right)-B\left(z\right)\right\Vert \in L^{2}\left(0,+\infty\right)$.\\
By (\ref{conv3}), $\upsilon+B(x)\in L^{2}\left(\left[0,+\infty\right[\right)$.
Since $x$ is bounded, from (\ref{conv9}) we obtain $g_{z}\in L^{2}\left(\left[0,+\infty\right[\right)$.
Thus $g_{z}$ and $\frac{d}{dt}g_{z}$ belong to $L^{2}\left(\left[0,+\infty\right[\right)$.
By Lemma \ref{lm:aux} we conclude that $\lim_{t\to+\infty}g_{z}\left(t\right)$
exists. Indeed the limit is equal to zero. 

As a direct consequence of the above proof we have the following result.

\begin{proposition}\label{conv-Phi1} Suppose that $S\neq\emptyset$. Then,
for any $z\in S$, 
\[
\Phi(z)-\Phi(x(t))-\left\langle v(t),z-x(t)\right\rangle \longrightarrow0\quad\mbox{as}\ t\rightarrow+\infty.
\]
 \end{proposition}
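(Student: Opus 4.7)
The plan is to recognize that the quantity
\[
\Phi(z)-\Phi(x(t))-\langle v(t),z-x(t)\rangle
\]
is precisely $g_z(t)$ from the preceding Lyapunov analysis. So the proposition is asserting that $g_z(t)\to 0$, which is flagged in the last sentence of the proof of Theorem \ref{conv1}.

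First, I would record the two ingredients already established in the proof of item 4 of Theorem \ref{conv1}: (a) $g_z$ belongs to $L^2([0,+\infty[)$, which was obtained by combining the upper bound
\[
0\leq g_z(t)\leq \bigl(\|Bz-Bx(t)\|+\|Bx(t)+v(t)\|\bigr)\|x(t)-z\|
\]
with the $L^2$ integrability of $\|Bx-Bz\|$ (Proposition \ref{estim-x}, item 1), the $L^2$ integrability of $v+B(x)$ (from \eqref{conv3}), and the boundedness of $x$ (Proposition \ref{estim-x}, item 2); and (b) $\frac{d}{dt}g_z$ belongs to $L^2([0,+\infty[)$, via the identity from Lemma \ref{g-prop},
\[
\tfrac{d}{dt}g_z(t)=\langle x(t)-z,\dot v(t)\rangle,
\]
together with $\|\dot v\|\in L^2$ (Proposition \ref{estim-x}, item 5) and the boundedness of $x$.

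Then I would apply Lemma \ref{lm:aux} with $F(t)=g_z(t)$, $p=2$, $G(t)=|\frac{d}{dt}g_z(t)|\le \|\dot v(t)\|\|x(t)-z\|$, and $r=2$. All hypotheses are met: $g_z$ is nonnegative and locally absolutely continuous (being a sum of locally absolutely continuous terms), $g_z\in L^2$, $G\in L^2$, and $\frac{d}{dt}g_z\le G$ almost everywhere. Lemma \ref{lm:aux} then yields $\lim_{t\to+\infty}g_z(t)=0$, which is exactly the conclusion of the proposition.

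There is no real obstacle here, since all the technical estimates have already been carried out in the proof of Theorem \ref{conv1}; the proposition is essentially extracted from that argument. The only point worth emphasizing is that the existence of the limit (established via Opial's lemma route in item 4) is upgraded to the value $0$ purely from the combination of nonnegativity, $L^2$-integrability, and Lemma \ref{lm:aux}.
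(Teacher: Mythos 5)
Your proposal is correct and follows exactly the paper's own route: the paper derives this proposition as a direct consequence of step 4 of the proof of Theorem \ref{conv1}, where $g_z\in L^2$ and $\frac{d}{dt}g_z\in L^2$ are established and Lemma \ref{lm:aux} is invoked to conclude $\lim_{t\to+\infty}g_z(t)=0$. The only cosmetic quibble is that the existence of the limit is not obtained "via Opial's lemma" but directly from Lemma \ref{lm:aux}, which already yields the value $0$.
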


Let us complete the above result by the following related convergence
properties of the orbits $x$ of system (\ref{2})-(\ref{3}).

\begin{proposition}\label{conv-Phi2} Suppose that $S\neq\emptyset$. Then,
for any $z\in S$, 
\[
\int_{0}^{+\infty}\Phi\left(x\left(t\right)\right)-\Phi\left(z\right)+\left\langle B\left(z\right),x\left(t\right)-z\right\rangle dt<+\infty,
\]
 and 
\[
\Phi(x(t))-\Phi(z)+\left\langle Bz,x(t)-z\right\rangle \longrightarrow0\quad\mbox{as}\ t\rightarrow+\infty,
\]
 where $Bz$ is the element which is uniquely defined for $z\in S$.
 In particular (take $z=x_{\infty}$), 
 \[
 \Phi(x(t)) \rightarrow \Phi(x_{\infty}) \quad\mbox{as}\ t\rightarrow+\infty,
 \]
 where $x_{\infty}\in S$ is the weak limit of the trajectory $t \mapsto x(t)$.
\end{proposition}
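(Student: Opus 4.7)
The plan is to introduce $h(t):=\Phi(x(t))-\Phi(z)+\langle Bz,x(t)-z\rangle$, which is nonnegative on $[0,+\infty[$ thanks to $-Bz\in\partial\Phi(z)$ and the convex subdifferential inequality at $z$. Both claims will then follow by applying Lemma \ref{lm:aux} to $F=h$, so the two main tasks are to establish $h\in L^{1}([0,+\infty[)$ and to produce an $L^{r}$ bound on $\dot h$. The corollary about $\Phi(x(t))\to\Phi(x_{\infty})$ will follow at the end from the weak convergence of $x(t)$ already proved in Theorem \ref{conv1}.

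For the integrability, I would return to identity (\ref{gamma1}) and split the scalar product as $\langle x(t)-z,v(t)+B(x(t))\rangle=\langle x(t)-z,v(t)-\xi\rangle+\langle x(t)-z,B(x(t))-Bz\rangle$ with $\xi:=-Bz\in\partial\Phi(z)$. A direct rearrangement using the definitions of $g_{z}$ and $h$ shows $\langle x(t)-z,v(t)-\xi\rangle=h(t)+g_{z}(t)$, while the second inner product is nonnegative by monotonicity of $B$. Therefore $-\frac{d}{dt}\Gamma_{z}(t)\geq\mu\, h(t)\geq 0$, and integrating on $[0,+\infty[$ together with $\Gamma_{z}\geq 0$ decreasing (Proposition \ref{estim-x}) yields $\mu\int_{0}^{+\infty}h(t)\,dt\leq\Gamma_{z}(0)<+\infty$, which is the first assertion.

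For the pointwise convergence I would compute $\dot h$ via the chain rule (Lemma \ref{deriv-chain}) and the relation $v(t)\in\partial\Phi(x(t))$, obtaining $\dot h(t)=\langle v(t)+Bz,\dot x(t)\rangle$. Writing $v(t)+Bz=(v(t)+B(x(t)))+(Bz-B(x(t)))$ and applying Cauchy-Schwarz gives $|\dot h(t)|\leq\bigl(\|v(t)+B(x(t))\|+\|B(x(t))-Bz\|\bigr)\|\dot x(t)\|$. By (\ref{conv3}) and Proposition \ref{estim-x} items 1 and 3, each factor on the right belongs to $L^{2}([0,+\infty[)$, so $\dot h\in L^{1}([0,+\infty[)$. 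Combined with $h\in L^{1}([0,+\infty[)$ and $h\geq 0$, Lemma \ref{lm:aux} (applied with $F=h$, $G=|\dot h|$, $p=r=1$) gives $h(t)\to 0$, which is the second assertion. The specialization to $z=x_{\infty}$ uses the weak convergence $x(t)\rightharpoonup x_{\infty}$ from Theorem \ref{conv1} to conclude $\langle B(x_{\infty}),x(t)-x_{\infty}\rangle\to 0$, so the already proven $h(t)\to 0$ collapses to $\Phi(x(t))\to\Phi(x_{\infty})$.

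I do not expect a serious obstacle: the whole argument rests on estimates already produced for Theorem \ref{conv1}. The only step that needs some care is the algebraic identity $\langle x(t)-z,v(t)-\xi\rangle=h(t)+g_{z}(t)$, which is a bookkeeping rearrangement in the definitions of $g_{z}$ and $h$; after that, everything is a direct combination of Cauchy-Schwarz, the $L^{2}$ estimates of Proposition \ref{estim-x}, and Lemma \ref{lm:aux}.
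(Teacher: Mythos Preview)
Your proof is correct and follows essentially the same route as the paper: both start from identity (\ref{gamma1}), extract the inequality $\frac{d}{dt}\Gamma_{z}(t)+\mu h(t)\leq 0$ (your exact identity $\langle x(t)-z,v(t)+Bz\rangle=h(t)+g_{z}(t)$ is just the paper's subdifferential inequality before dropping the nonnegative $g_{z}$), integrate to get $h\in L^{1}$, and then invoke Lemma \ref{lm:aux}. The only tactical difference is in bounding $\dot h=\langle v+Bz,\dot x\rangle$: the paper uses $\|v\|\in L^{\infty}$ (Proposition \ref{estim-x}, item 6) to place $\dot h$ in $L^{2}$, whereas you split $v+Bz=(v+B(x))+(Bz-B(x))$ and use the $L^{2}$ estimates on each piece to place $\dot h$ in $L^{1}$---both are valid inputs to Lemma \ref{lm:aux}.
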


 \begin{proof} Let us return to (\ref{gamma1}) 
\[
\frac{d}{dt}\Gamma_{z}\left(t\right)+\mu\left\langle x\left(t\right)-z,\upsilon(t)\right\rangle +\mu\left\langle x\left(t\right)-z,B\left(x\left(t\right)\right)\right\rangle =0.
\]
 By $\upsilon(t)\in\partial\Phi\left(x\left(t\right)\right)$, we
have the subdifferential inequality 
\[
\Phi\left(z\right)\geq\Phi\left(x\left(t\right)\right)+\left\langle z-x\left(t\right),\upsilon(t)\right\rangle .
\]
 Combining the two above relations yields 
\begin{align*}
\frac{d}{dt}\Gamma_{z}\left(t\right)+\mu\left[\Phi\left(x\left(t\right)\right)-\Phi\left(z\right)\right]+\mu\left\langle B\left(x\left(t\right)\right)-B\left(z\right),x\left(t\right)-z\right\rangle +\mu\left\langle B\left(z\right),x\left(t\right)-z\right\rangle  & \leq0.
\end{align*}
 Since $B$ is $\beta-$ cocoercive, we have $\left\langle B\left(x\left(t\right)\right)-B\left(z\right),x\left(t\right)-z\right\rangle \geq\beta\left\Vert B\left(x\left(t\right)\right)-B\left(z\right)\right\Vert ^{2}$.
Hence 
\[
\frac{d}{dt}\Gamma_{z}\left(t\right)+\mu\left[\Phi\left(x\left(t\right)\right)-\Phi\left(z\right)\right]+\mu\beta\left\Vert B\left(x\left(t\right)\right)-B\left(z\right)\right\Vert ^{2}+\mu\left\langle B\left(z\right),x\left(t\right)-z\right\rangle \leq0.
\]
 As a consequence 
\begin{equation}
\frac{d}{dt}\Gamma_{z}\left(t\right)+\mu\left[\Phi\left(x\left(t\right)\right)-\Phi\left(z\right)+\left\langle B\left(z\right),x\left(t\right)-z\right\rangle \right]\leq0.\label{conv-Phi-1}
\end{equation}
 Since $-Bz\in\partial\Phi(z)$ we have $\Phi\left(x\left(t\right)\right)-\Phi\left(z\right)+\left\langle B\left(z\right),x\left(t\right)-z\right\rangle \geq0$.
By integration of (\ref{conv-Phi-1}), and $\Gamma_{z}$ minorized,
we obtain 
\begin{equation}
\int_{0}^{+\infty}\Phi\left(x\left(t\right)\right)-\Phi\left(z\right)+\left\langle B\left(z\right),x\left(t\right)-z\right\rangle dt<+\infty.\label{conv-Phi-2}
\end{equation}
 Let us apply Lemma \ref{lm:aux} with $F_{2}(t)=\Phi\left(x\left(t\right)\right)-\Phi\left(z\right)+\left\langle B\left(z\right),x\left(t\right)-z\right\rangle $.
By (\ref{conv-Phi-2}), and $F_{2}$ nonnegative, we have $F_{2}\in L^{1}([0,+\infty[)$.
Moreover by using the derivation chain rule in the nonsmooth convex
case, see Lemma \ref{deriv-chain}, and $\upsilon(t)\in\partial\Phi\left(x(t)\right)$,
we have $\frac{d}{dt}\Phi\left(x\left(t\right)\right)=\left\langle \upsilon\left(t\right),\dot{x}\left(t\right)\right\rangle $.
Hence 
\begin{equation}
\frac{d}{dt}F_{2}(t)=\left\langle \dot{x}(t),\upsilon\left(t\right)+B\left(z\right)\right\rangle ,\label{conv-Phi-3}
\end{equation}
 which by Cauchy-Schwarz inequality yields 
\begin{equation}
|\frac{d}{dt}F_{2}(t)|\leq\left\Vert \dot{x}(t)\right\Vert \left(\left\Vert \upsilon\left(t\right)\right\Vert +\left\Vert Bz\right\Vert \right).\label{conv-Phi-4}
\end{equation}
 By Proposition \ref{estim-x}, item 3, $\left\Vert \dot{x}\right\Vert \in L^{2}\left(\left[0,+\infty\right[\right).$ Moreover,
by Proposition \ref{estim-x}, item  5, $\left\Vert v\right\Vert \in L^{\infty}\left(\left[0,+\infty\right[\right)$.
Hence, by (\ref{conv-Phi-4}), we obtain  $\frac{d}{dt}F_{2}\in L^{2}([0,+\infty[)$.
By Lemma \ref{lm:aux}, we deduce that $\lim_{t\rightarrow+\infty}F_{2}(t)=0$,
which is our claim.\\
 Note that the same conclusion can be obtained, by using the relation
\[
\Phi\left(x\left(t\right)\right)-\Phi\left(z\right)+\left\langle B\left(z\right),x\left(t\right)-z\right\rangle =-[\Phi(z)-\Phi(x(t))-\left\langle v(t),z-x(t)\right\rangle ]+\left\langle v(t)+B(z),x(t)-z\right\rangle ,
\]
 Proposition \ref{conv-Phi1}, and Theorem \ref{conv1}, item 3. \end{proof}

\begin{corollary} \label{strong-conv} Let us suppose that $\Phi$ is strongly
convex. Then the solution set $S$ is reduced to a single element
$\bar{z}$, and any orbit $x(\cdot)$ of system {\rm(\ref{2})-(\ref{3})} converges
strongly to $\bar{z}$, as $t\rightarrow+\infty$. 
\end{corollary}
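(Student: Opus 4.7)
The plan has two parts: uniqueness of the equilibrium, and upgrading the weak convergence of Theorem \ref{conv1} to strong convergence by exploiting the strong convexity of $\Phi$.

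\textbf{Uniqueness.} Let $\alpha>0$ be the strong convexity modulus of $\Phi$, so that $\partial\Phi$ is $\alpha$-strongly monotone. If $z_1,z_2\in S$, then $-Bz_1\in\partial\Phi(z_1)$ and $-Bz_2\in\partial\Phi(z_2)$. Applying strong monotonicity of $\partial\Phi$ yields
\[
\langle -Bz_1+Bz_2,\,z_1-z_2\rangle\ \geq\ \alpha\,\|z_1-z_2\|^2,
\]
i.e.\ $-\langle Bz_1-Bz_2,z_1-z_2\rangle\geq \alpha\|z_1-z_2\|^2$. Combined with monotonicity of $B$, which gives $\langle Bz_1-Bz_2,z_1-z_2\rangle\geq 0$, this forces $\|z_1-z_2\|=0$. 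Hence $S=\{\bar z\}$.

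\textbf{Strong convergence.} The key observation is that strong convexity controls the Bregman-type quantity $g_{\bar z}$ from below. Indeed, since $v(t)\in\partial\Phi(x(t))$, the $\alpha$-strong convexity inequality applied at the point $x(t)$ with subgradient $v(t)$ reads
\[
\Phi(\bar z)\ \geq\ \Phi(x(t))+\langle v(t),\,\bar z-x(t)\rangle+\frac{\alpha}{2}\|\bar z-x(t)\|^2,
\]
which, by the very definition of $g_{\bar z}$, means
\[
g_{\bar z}(t)\ =\ \Phi(\bar z)-\Phi(x(t))-\langle v(t),\bar z-x(t)\rangle\ \geq\ \frac{\alpha}{2}\|x(t)-\bar z\|^2.
\]
By Theorem \ref{conv1} the orbit $x(\cdot)$ converges weakly to the unique element $\bar z\in S$, and Proposition \ref{conv-Phi1} (applied with $z=\bar z$) gives $g_{\bar z}(t)\to 0$ as $t\to+\infty$. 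Combining this with the lower bound above yields $\|x(t)-\bar z\|^2\leq \frac{2}{\alpha}g_{\bar z}(t)\to 0$, which is exactly the strong convergence claim.

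There is no real obstacle here: once one recognizes that the Lyapunov ingredient $g_{z}$ is a Bregman distance for $\Phi$, strong convexity upgrades it into a genuine quadratic lower bound on $\|x(t)-\bar z\|^2$, and the convergence $g_{\bar z}(t)\to 0$ already established in the proof of Theorem \ref{conv1}, item~4 (and reformulated as Proposition \ref{conv-Phi1}) closes the argument immediately. One could alternatively rerun the Lyapunov computation of Proposition \ref{estim-x} with the extra term $\mu\alpha\|x(t)-\bar z\|^2$ coming from strong monotonicity to get an explicit decay rate, but this is not needed for the statement as written.
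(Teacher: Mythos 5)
Your proof is correct and follows essentially the same strategy as the paper: strong convexity yields a quadratic lower bound on a Bregman-type quantity that the preceding analysis has already shown to vanish, and uniqueness follows from strong monotonicity of $\partial\Phi$ plus monotonicity of $B$. The only (immaterial) difference is that you anchor the strong-convexity inequality at $x(t)$ with subgradient $v(t)$ and invoke Proposition~\ref{conv-Phi1} (i.e.\ $g_{\bar z}(t)\to 0$), whereas the paper anchors it at $\bar z$ with subgradient $-B\bar z$ and invokes Proposition~\ref{conv-Phi2}; the two are mirror images and close the argument identically.
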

 \begin{proof}
Since $\Phi$ is strongly convex, its subdifferential $\partial\Phi$
is strongly monotone, and so is the sum $A=\partial\Phi+B$. Thus
the solution set is reduced to a single element, let $\bar{z}$.\\
 Moreover, since $\Phi$ is strongly convex, and $-B(\bar{z})\in\partial\Phi(\bar{z})$,
we have the subdifferential inequality 
\[
\Phi\left(x\left(t\right)\right)-\Phi\left(\bar{z}\right)+\left\langle B\left(z\right),x\left(t\right)-\bar{z}\right\rangle \geq\gamma{\left\Vert x(t)-\bar{z}\right\Vert }^{2}
\]
 for some positive constant $\gamma$. By Proposition \ref{conv-Phi2}
\[
\Phi(x(t))-\Phi(\bar{z})+\left\langle B\bar{z},x(t)-\bar{z}\right\rangle \longrightarrow0\quad\mbox{as}\ t\rightarrow +\infty.
\]
 Hence $\lim\left\Vert x(t)-\bar{z}\right\Vert =0$, which gives the
claim. 
\end{proof}

\begin{corollary} \label{inf-compact} Suppose that $S\neq\emptyset$. Let us suppose that $\Phi$ is boundedly inf-compact, 
i.e., the intersections of the sublevel sets of $\Phi$ with closed balls of $\mathcal H$ 
are relatively compact sets. Then any orbit $x(\cdot)$ of system {\rm(\ref{2})-(\ref{3})} converges
strongly  as $t\rightarrow+\infty$, and its limit belongs to  $S$. 
\end{corollary}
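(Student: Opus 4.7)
The plan is to combine the weak convergence already established in Theorem \ref{conv1} with the bounded inf-compactness of $\Phi$ to upgrade the convergence to the strong topology. Let $x_\infty \in S$ be the weak limit of $x(\cdot)$ provided by Theorem \ref{conv1}, item 4. By Proposition \ref{estim-x}, item 2, the orbit $x(\cdot)$ is bounded, so there exists $R>0$ such that $\|x(t)\|\leq R$ for all $t\geq 0$. By Proposition \ref{conv-Phi2} applied with $z=x_\infty$, we have $\Phi(x(t))\to \Phi(x_\infty)$ as $t\to +\infty$; in particular, there exists $T\geq 0$ such that $\Phi(x(t))\leq \Phi(x_\infty)+1$ for all $t\geq T$.

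Consequently, for every $t\geq T$, the point $x(t)$ lies in the set
\[
K := \{u\in \mathcal H : \|u\|\leq R\} \cap \{u\in \mathcal H : \Phi(u)\leq \Phi(x_\infty)+1\},
\]
which, by the bounded inf-compactness assumption on $\Phi$, is relatively compact in the norm topology of $\mathcal H$. Therefore, from any sequence $t_n\to +\infty$ one can extract a subsequence $t_{n_k}$ such that $x(t_{n_k})$ converges strongly to some $\bar{x}\in \mathcal H$. Strong convergence implies weak convergence, and by uniqueness of the weak limit we must have $\bar{x}=x_\infty$.

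To conclude that the whole trajectory converges strongly, I would argue by contradiction: if $x(t)$ did not converge strongly to $x_\infty$, there would exist $\varepsilon>0$ and a sequence $t_n\to +\infty$ with $\|x(t_n)-x_\infty\|\geq \varepsilon$ for all $n$; but the previous compactness argument produces a subsequence converging strongly to $x_\infty$, contradicting the lower bound $\varepsilon$. Hence $x(t)\to x_\infty$ strongly, and since $x_\infty\in S$, the conclusion of Corollary \ref{inf-compact} follows.

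I do not expect any genuine obstacle here: the key analytic work (weak convergence, boundedness, and convergence of $\Phi(x(t))$) has already been done in Theorem \ref{conv1} and Proposition \ref{conv-Phi2}. The only delicate point is the bookkeeping in the last step, ensuring that relative compactness plus a unique weak cluster point yields convergence of the whole net, which is a standard compactness-uniqueness argument.
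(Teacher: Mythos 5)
Your proof is correct and follows essentially the same route as the paper: weak convergence to $x_\infty\in S$ plus boundedness of the orbit and the convergence $\Phi(x(t))\to\Phi(x_\infty)$ from Proposition \ref{conv-Phi2} place the tail of the trajectory in a relatively compact set, and the standard compactness--uniqueness argument (every strong cluster point must coincide with the unique weak limit) upgrades weak to strong convergence. The paper leaves this last step implicit; you have merely spelled it out.
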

 \begin{proof}
 We know that the orbit $x(\cdot)$ of system (\ref{2})-(\ref{3}) converges weakly to some $\bar{z} \in S$. As a consequence it is bounded. Moreover by
Proposition \ref{conv-Phi2}, 
 $\Phi(x(t)) \rightarrow \Phi(x_{\infty})$, and hence $x(\cdot)$ remains in a fixed sublevel set of $\Phi$. 
 Since $\Phi$ is boundedly inf-compact, we obtain that the trajectory is relatively compact 
 in $\mathcal H$, and thus converges strongly.
\end{proof}

\begin{remark} { \rm By a similar argument, strong convergence of  $x(\cdot)$ holds under the  Kadek-Klee property:
whenever $(x_k)$ converges weakly to $x$, and $(\Phi(x_k))$ converges to $\Phi(x)$, then $(x_k)$ converges strongly to $x$.}  
\end{remark}

\section{Forward-backward algorithms associated with the  regularized Newton dynamic}\label{algo}

Our algorithm is constructed using the following ideas. We rely on
the equivalent formulation of the dynamic involving the new variable
$y$ 
\begin{align}
 & x\left(t\right)=\mbox{prox}_{\mu\Phi}(y\left(t\right))\label{basic 30}\\
 & \dot{y}\left(t\right)+y\left(t\right)-\mbox{prox}_{\mu\Phi}(y(t))+\mu B\left(\mbox{prox}_{\mu\Phi}(y(t))\right)=0.\label{basic 31}
\end{align}
 The differential equation (\ref{basic 31}) is governed by a Lipschitz
continuous operator. Explicit discretization of (\ref{basic 31})
with respect to the time variable $t$, with constant step size $h>0$,
gives 
\[
\begin{cases}
x_{k}=\mbox{prox}_{\mu\Phi}(y_{k}),\\
\frac{y_{k+1}-y_{k}}{h}+y_{k}-\mbox{prox}_{\mu\Phi}(y_{k})+\mu B\left(\mbox{prox}_{\mu\Phi}(y_{k})\right)=0.
\end{cases}
\]
 The algorithm can be equivalently written as \ $(x_{k},y_{k})\rightarrow(x_{k},y_{k+1})\rightarrow(x_{k+1},y_{k+1})$,
\begin{equation}
{\rm (FBN)}\ \begin{cases}
x_{k}=\mbox{prox}_{\mu\Phi}(y_{k}),\\
\smallskip y_{k+1}=(1-h)y_{k}+h\left(x_{k}-\mu B\left(x_{k}\right)\right).
\end{cases}
\end{equation}
 When $h=1$ we recover the classical forward-backward algorithm 
\[
x_{k+1}=\mbox{prox}_{\mu\Phi}\Big(x_{k}-\mu B\left(x_{k}\right)\Big).
\]
(FBN) is closely related to the relaxed forward-backward
algorithm (\cite[Theorem 25.8]{BC}). It involves the same basic blocks
but in a different order. When the prox is linear, then the operations
commute, and we recover the classical relaxed forward-backward algorithm.
But, in general, for nonlinear problems, and in the case $\lambda$
non constant (which is of interest with respect to Newton method)
this is not the case. 
 As a guide for our study of the convergence of this algorithm, we
 use the Lyapunov functions that have been put to the fore in
the study of the continuous dynamics.

\noindent As a standing assumption we make the following set of hypotheses:

\smallskip{}

\noindent \textbf{Hypothesis $\mathbf{H}$:} 
\begin{itemize}
\item $ $$\mathbf{H}_{\Phi}$ : The function $\Phi: \mathcal H\to\mathbb{R}\cup \left\{+\infty\right\}$
is proper, lower-semicontinuous and convex. \smallskip{}

\item $\mathbf{H}_{B}$: The operator $B:\mathcal H\rightarrow \mathcal H$ is $\beta$-cocoercive.
\smallskip{}

\item $\mathbf{H}_{h,\mu}$: Parameters $h$ and $\mu$ satisfy $0<h<\delta:=\frac{1}{2}+\inf\left\{ 1,\frac{\beta}{\mu}\right\} $,
and $0<\mu<2\beta$. \smallskip{}

\item $\mathbf{H}_{S}$: The solution set $S=\left\{ z\in \mathcal H;\;\partial\Phi(z)+Bz\ni0\right\} $
is nonempty. 
\end{itemize}
Let us state our main convergence result.

\begin{theorem}\label{T:main} Let Hypothesis $\mathbf{H}$ hold. Let
$(x_{k},y_{k})$ be a sequence generated by {\em (FBN)}. Then the
following properties hold:

\smallskip{}

a) $(y_{k})$ converges weakly  to an element $\bar{y}$,
with $\mbox{{ {\rm prox}}}_{\mu\Phi}\bar{y}\in S$;

\smallskip{}

b) \ $(x_{k})$ converges weakly  to $\bar{x}=\mbox{{ {\rm prox}}}_{\mu\Phi}\bar{y}$,
with $\bar{x}\in S$;

\smallskip{}

c) \ $B(x_{k})$ converges strongly to $B\bar{x}$;

\smallskip{}

d) the velocity is square summable, i.e., $\sum_{k}\|x_{k}-x_{k-1}\|^{2}<\infty$,
and $\sum_{k}\|y_{k}-y_{k-1}\|^{2}<\infty$; in particular $x_{k}-x_{k-1}$
and $y_{k}-y_{k-1}$ converge strongly to zero.

\smallskip{}

e) $y_{k}-x_{k}$ converges strongly in $\mathcal H$. 
\end{theorem}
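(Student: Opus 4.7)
The strategy is to transfer the continuous Lyapunov analysis of Section~\ref{asympt-conv} to the discrete setting, working primarily with the variable $y_k$. Setting $v_k := (y_k - x_k)/\mu$, the first line of (FBN) gives $v_k \in \partial\Phi(x_k)$, and the second rewrites as
\[
y_{k+1} - y_k = -h\mu(v_k + Bx_k), \qquad \text{equivalently} \qquad y_{k+1} = (1-h) y_k + h R(y_k),
\]
where $R := (I - \mu B) \circ \mbox{prox}_{\mu\Phi}$. The fixed points of $R$ are exactly the vectors $\bar y_z := z - \mu Bz$ with $z \in S$, since $-Bz \in \partial\Phi(z)$ forces $\mbox{prox}_{\mu\Phi}(z - \mu Bz) = z$. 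Thus the algorithm is a relaxed fixed-point iteration for the (averaged) nonexpansive operator $R$.

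The heart of the argument is a discrete Fej\'er-type inequality: for every $z \in S$,
\[
\|y_{k+1} - \bar y_z\|^2 \; \leq \; \|y_k - \bar y_z\|^2 \; - \; c_1 \|Bx_k - Bz\|^2 \; - \; c_2 \|v_k + Bx_k\|^2,
\]
with constants $c_1, c_2 > 0$ under Hypothesis $\mathbf{H}_{h,\mu}$. To obtain it I would expand $\|y_{k+1} - \bar y_z\|^2$ using the identity
\[
\|(1-h)a + h b\|^2 = (1-h)\|a\|^2 + h\|b\|^2 + h(h-1)\|a - b\|^2
\]
(valid for every real $h$) with $a = y_k - \bar y_z$ and $b = R(y_k) - \bar y_z$, and then bound $\|R(y_k) - \bar y_z\|^2 = \|(x_k - z) - \mu(Bx_k - Bz)\|^2$ by expanding the norm and using (i) the $\beta$-cocoercivity of $B$, together with (ii) the monotonicity inequality $\langle v_k + Bz,\, x_k - z\rangle \geq 0$ (since $v_k \in \partial\Phi(x_k)$ and $-Bz \in \partial\Phi(z)$). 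This yields
\[
\|R(y_k) - \bar y_z\|^2 \; \leq \; \|y_k - \bar y_z\|^2 \; - \; \mu(2\beta - \mu)\|Bx_k - Bz\|^2 \; - \; \mu^2 \|v_k + Bz\|^2,
\]
which is where the assumption $\mu < 2\beta$ enters. The main obstacle is to absorb the \emph{positive} remainder $+h(h-1)\mu^2\|v_k + Bx_k\|^2$ that appears in the identity when $h > 1$: I will lower-bound $\|v_k + Bz\|^2$ in terms of $\|v_k + Bx_k\|^2$ and $\|Bx_k - Bz\|^2$ via Young's inequality with a free parameter $\epsilon > 0$, and then choose $\epsilon$ so that both residual coefficients $c_1, c_2$ remain strictly positive. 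A routine computation shows this is feasible precisely when $h < 2 - \mu/(2\beta)$, which is implied by $\mathbf{H}_{h,\mu}$ (one checks the cases $\mu \leq \beta$ and $\mu > \beta$ separately).

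Once the Fej\'er estimate is available, the rest of the theorem follows cleanly. Summability gives $(y_k)$ and hence $(x_k)$ bounded, $\sum_k \|Bx_k - Bz\|^2 < \infty$ (so $Bx_k \to Bz$ strongly, which is (c)), and $\sum_k \|v_k + Bx_k\|^2 < \infty$ (so $v_k + Bx_k \to 0$ strongly). Since $y_{k+1} - y_k = -h\mu(v_k + Bx_k)$ and firm nonexpansiveness of $\mbox{prox}_{\mu\Phi}$ gives $\|x_{k+1} - x_k\| \leq \|y_{k+1} - y_k\|$, the velocity estimates in (d) follow. For weak convergence I apply Opial's lemma (Lemma~\ref{Opial}) to $(y_k)$ with target set $\mbox{Fix}(R)$: condition (i) of Opial is the Fej\'er property just established, and condition (ii) follows from $R(y_k) - y_k \to 0$ together with the standard demi-closedness of $I - R$ at $0$ for the nonexpansive $R$. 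This produces $y_k \rightharpoonup \bar y \in \mbox{Fix}(R)$, which is (a). Setting $\bar x := \mbox{prox}_{\mu\Phi}(\bar y) \in S$ and combining $Bx_k \to B\bar x$ with $v_k + Bx_k \to 0$ gives $v_k \to -B\bar x$ strongly; consequently $y_k - x_k = \mu v_k$ converges strongly to $-\mu B\bar x$, which is (e), and together with $y_k \rightharpoonup \bar y$ yields $x_k \rightharpoonup \bar y + \mu B\bar x = \bar x$, proving (b).
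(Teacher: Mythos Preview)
Your proof is correct and takes a genuinely different route from the paper's. The paper first invokes the abstract averaged-operator machinery (Lemma~\ref{l:90} and \cite[Proposition~5.15]{BC}) to obtain weak convergence of $(y_k)$ and $\sum_k\|y_{k+1}-y_k\|^2<\infty$ from the Krasnosel'ski--Mann theorem, and then runs a \emph{second} Lyapunov argument with the Bregman-type sequence $A_k=\tfrac{1}{2\mu}\|x_k-\bar z\|^2+g_{\bar z}^k$ to extract the estimates on $\sum_k\|Bx_k-B\bar z\|^2$ and $\sum_k\|x_{k+1}-x_k\|^2$. You instead compute a single explicit Fej\'er inequality for $\|y_k-\bar y_z\|^2$ which delivers all summability estimates simultaneously, and then finish with Opial and Browder's demiclosedness principle for the nonexpansive map $R$. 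Your approach is more self-contained (no appeal to the composition rule for averaged operators) and, as a bonus, your feasibility calculation actually yields the result for every $h<2-\mu/(2\beta)$, which is at least as large as the paper's bound $\delta=\tfrac12+\inf\{1,\beta/\mu\}$ (with equality only at $\mu=\beta$); you do not need this sharpening, but it is worth noting. The paper's approach, on the other hand, has the virtue of making the connection with the continuous Bregman Lyapunov function $\Gamma_z$ of Section~\ref{RN} transparent.
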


Before proving Theorem \ref{T:main}, we  review some classical results on $\alpha$-averaged operators, which will
be useful. 

\subsection {$\alpha$-averaged operators}\label{average}

We will use the notion of $\alpha$-averaged operator, see \cite[Definition 4.23]{BC}.
An operator $T:\mathcal H\to \mathcal H$ is $\alpha$-averaged with constant $0<\alpha<1$,
if there exists a nonexpansive operator $R:\mathcal H\to \mathcal H$ such that $T=(1-\alpha)I+\alpha R$.
The notions of cocoerciveness and $\alpha$-averaged are intimately
related. We collect below some classical facts that will be useful. 

\begin{itemize}
\item $T:\mathcal H\to \mathcal H$ is $\beta$-cocoercive iff $\beta T$ is $\frac{1}{2}$-averaged,
see \cite[Remark 4.24]{BC}. 
\item For any $\mu>0$, the operator $\mbox{prox}_{\mu\Phi}$ is $\frac{1}{2}$-averaged
(\cite[Corollary 23.8]{BC}). 
\item Suppose that $T:\mathcal H\to \mathcal H$ is $\beta$-cocoercive, and $0<\mu<2\beta$.
Then, the operator $(I-\mu B)$ is $\frac{\mu}{2\beta}$-averaged
(\cite[Proposition 4.33]{BC}). This result makes precise Lemma \ref{l:9}. 
\end{itemize}

A major interest of the notion of $\alpha$-averaged operator is that
the composition of two such operators is still an averaged operator.
This will be particularly useful when considering Krasnoselki-Mann
iteration for the mapping $T=(I-\mu B)\circ(\mbox{prox}_{\mu\Phi})$.
More precisely 

\begin{lemma}{\rm \cite[Proposition~4.32]{BC}}\label{l:90}
Let $T_{i}: \mathcal H\to \mathcal H$ be a $\alpha_{i}$-averaged operator, $i=1,2$.
Then $T:=T_{1}\circ T_{2}$ is $\alpha$-averaged with constant $\alpha=\frac{1}{\delta}$,
and $\delta=\frac{1}{2}+\frac{1}{2}\inf\left\{ \frac{1}{\alpha_{1}},\frac{1}{\alpha_{2}}\right\} $.
\end{lemma}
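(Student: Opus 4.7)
The plan is to work from the standard equivalent characterization of $\alpha$-averagedness: an operator $T:\mathcal H\to\mathcal H$ is $\alpha$-averaged if and only if, for all $x,y\in\mathcal H$,
\[
\|Tx-Ty\|^2 \;\le\; \|x-y\|^2 \;-\; \tfrac{1-\alpha}{\alpha}\,\|(I-T)x-(I-T)y\|^2.
\]
This follows by substituting $T=(1-\alpha)I+\alpha R$, expanding the square, and using that $R$ is nonexpansive iff $\|Rx-Ry\|^2\le\|x-y\|^2$. First I would record this equivalence, then apply it to $T_2$ at the pair $(x,y)$ and to $T_1$ at the pair $(T_2 x,T_2 y)$, and add the two resulting inequalities. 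With the shorthand $a:=(I-T_2)x-(I-T_2)y$ and $b:=(I-T_1)T_2 x-(I-T_1)T_2 y$, this chains the bounds through the composition and yields
\[
\|Tx-Ty\|^2 \;+\; \tfrac{1-\alpha_1}{\alpha_1}\|b\|^2 \;+\; \tfrac{1-\alpha_2}{\alpha_2}\|a\|^2 \;\le\; \|x-y\|^2.
\]

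The key algebraic observation, obtained by adding and subtracting $T_2 x-T_2 y$, is the telescoping identity $(I-T)x-(I-T)y=a+b$. To recover a single term of the form $\tfrac{1-\alpha}{\alpha}\|(I-T)x-(I-T)y\|^2$ on the left-hand side, I would factor out the common constant $c:=\inf\bigl\{\tfrac{1-\alpha_1}{\alpha_1},\tfrac{1-\alpha_2}{\alpha_2}\bigr\}$ and then apply the elementary parallelogram-type inequality $\|a+b\|^2\le 2\|a\|^2+2\|b\|^2$, giving
\[
\|Tx-Ty\|^2 \;+\; \tfrac{c}{2}\,\|(I-T)x-(I-T)y\|^2 \;\le\; \|x-y\|^2.
\]
Thus $T$ is $\alpha$-averaged with $\tfrac{1-\alpha}{\alpha}=\tfrac{c}{2}$.

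It only remains to identify this constant with $1/\delta$, which is a direct computation: using $\tfrac{1-\alpha_i}{\alpha_i}=\tfrac{1}{\alpha_i}-1$, one gets $\tfrac{1}{\alpha}=1+\tfrac{c}{2}=\tfrac{1}{2}+\tfrac{1}{2}\inf\bigl\{\tfrac{1}{\alpha_1},\tfrac{1}{\alpha_2}\bigr\}=\delta$, as claimed. The only mildly subtle point, and what I would flag as the main obstacle, is the choice to collapse the mixed sum $\tfrac{1-\alpha_1}{\alpha_1}\|b\|^2+\tfrac{1-\alpha_2}{\alpha_2}\|a\|^2$ by pulling out the minimum of the two coefficients and then using $\|a+b\|^2\le 2(\|a\|^2+\|b\|^2)$. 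A weighted Young-type bound $\|a+b\|^2\le(1+t)\|a\|^2+(1+1/t)\|b\|^2$ looks at first like it might produce a better averaging constant, but optimizing in $t$ in the worst-case pairing of $\alpha_1,\alpha_2$ recovers precisely the same $\delta$; this is worth verifying but introduces no real difficulty. I do not anticipate any other technical issue.
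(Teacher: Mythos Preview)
Your argument is correct. The paper itself does not give a proof of this lemma: it is quoted verbatim from \cite[Proposition~4.32]{BC} and used as a black box in the analysis of the (FBN) iteration. What you wrote is essentially the standard proof from that reference --- the same chaining of the characterization $\|Tx-Ty\|^2\le\|x-y\|^2-\tfrac{1-\alpha}{\alpha}\|(I-T)x-(I-T)y\|^2$ through the composition, the telescoping identity $(I-T)x-(I-T)y=a+b$, and then the crude bound $\|a+b\|^2\le 2\|a\|^2+2\|b\|^2$ after extracting the minimum coefficient. Your final identification of $1/\alpha$ with $\delta$ is also right. Your side remark about the weighted Young inequality is accurate but not needed here: the stated $\delta$ in the lemma is exactly what the crude bound produces, so there is nothing to optimize.
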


\subsection{Convergence of the (FBN) algorithm}

\begin{proof} We will first study the convergence of the sequence
$(y_{k})$, and then of the sequence $(x_{k})$.

\smallskip{}

\noindent \textbf{Convergence of the sequence $(y_{k})$}. \
It will be obtained as a direct consequence of the convergence of
the Krasnosel'ski-Mann iteration for nonexpansive mappings. Let 
 $T:\mathcal H\to \mathcal H$ be the operator which is defined by: for any $\xi \in \mathcal H$, 
\begin{equation}
T(\xi)=(I-\mu B)\circ(\mbox{prox}_{\mu\Phi}(\xi)).
\end{equation}
 The algorithm (FBN) can be equivalently written as 
\begin{equation}
\begin{cases}
x_{k}=\mbox{prox}_{\mu\Phi}(y_{k}),\\
\smallskip y_{k+1}=(1-h)y_{k}+hT(y_{k})\label{y-def}.
\end{cases}
\end{equation}
 i) Let us first suppose that $0<h<1$. Let us verify that we are
in the situation covered by Krasnosel'ski-Mann theorem. Since $B$
is $\beta$-cocoercive, and $0<\mu<2\beta$, by Lemma \ref{l:9},
the operator $Id-\mu B$ is nonexpansive. Since $\mbox{prox}_{\mu\Phi}$
is nonexpansive, we deduce that the composition mapping $T=(I-\mu B)\circ(\mbox{prox}_{\mu\Phi})$
is nonexpansive. By Krasnosel'ski-Mann algorithm, \cite[Theorem 5.14]{BC},
for $0<h<1$, the sequence $(y_{k})$ converges weakly
to a fixed point of $T$, let $y_{k}\to\bar{y}$ with $T(\bar{y})=\bar{y}$.
Equivalently, by definition of $T$, we have $\mbox{prox}_{\mu\Phi}(\bar{y})-\mu B(\mbox{prox}_{\mu\Phi}(\bar{y}))=\bar{y}$.
Set $\bar{z}:=\mbox{prox}_{\mu\Phi}(\bar{y})$. We have $\bar{z}-\mu B(\bar{z})=\bar{y}$,
which gives 
\[
\frac{1}{\mu}(\bar{y}-\mbox{prox}_{\mu\Phi}(\bar{y}))+B(\bar{z})=0.
\]
 The extremality condition characterizing $\mbox{prox}_{\mu\Phi}(\bar{y})$
gives 
\[
\frac{1}{\mu}(\bar{y}-\mbox{prox}_{\mu\Phi}(\bar{y}))\in\partial\Phi(\mbox{prox}_{\mu\Phi}(\bar{y})).
\]
 Comparing the two above equations, we finally obtain 
\[
\partial\Phi(\bar{z})+B(\bar{z})\ni0.
\]

\smallskip{}

ii) Take now $h$ possibly greater or equal than $1$, but $h<\delta$.
Let's analyze in more detail the algorithm 
\[
y_{k+1}=(1-h)y_{k}+hT(y_{k}).
\]
 We rely on the notion of $\alpha$-averaged operator that has been
discussed in the previous subsection. Let us examine the operator $T=(I-\mu B)\circ(\mbox{prox}_{\mu\Phi})$.
The operator $\mbox{prox}_{\mu\Phi}$ is $\frac{1}{2}$-averaged (\cite[Corollary 23.8]{BC}).
The operator $(I-\mu B)$ is $\frac{\mu}{2\beta}$-averaged (\cite[Proposition 4.33]{BC}).
Hence $T$ is $\alpha$-averaged with constant $\alpha=\frac{1}{\delta}$,
and $\delta=\frac{1}{2}+\inf\left\{ 1,\frac{\beta}{\mu}\right\} $,
see Lemma \ref{l:90}. By the condition $0<\mu<2\beta$, we have $\delta>1$,
and hence $0<\alpha<1$. By \cite[Proposition 5.15]{BC}, we deduce
that, for any $0<h<\delta$ the sequence $(y_{k})$ converges weakly
to a fixed point of $T$, and 
\[
\sum_{k}\|Ty_{k}-y_{k}\|^{2}<\infty.
\]
 Equivalently, by (\ref{y-def}) 
\begin{equation}
\sum_{k}\|y_{k+1}-y_{k}\|^{2}<\infty.\label{y-est}
\end{equation}
 Hence 
\begin{equation}
y_{k+1}-y_{k}\to0\ \mbox{strongly in } \mathcal H.\label{y-est-2}
\end{equation}

\smallskip{}

\noindent \textbf{A Lyapunov-type sequence} Take $\bar{z}$ an arbitrary
element in $S$. Since $x_{k}=\mbox{prox}_{\mu\Phi}(y_{k})$, we have
\begin{equation}
v_{k}:=\frac{1}{\mu}(y_{k}-x_{k})\in\partial\Phi(x_{k}).\label{E:Liap51}
\end{equation}
 As a Lyapunov sequence take 
\begin{equation}
A_{k}:=\frac{1}{2\mu}\|x_{k}-\bar{z}\|^{2}+g_{\bar{z}}^{k},\label{E:Liap8}
\end{equation}
 with 
\begin{equation}
g_{\bar{z}}^{k}:=\Phi(\bar{z})-\left(\Phi(x_{k})+\langle\bar{z}-x_{k},v_{k}\rangle\right).\label{E:Liap5}
\end{equation}
 a) The following equality is a direct consequence of the Hilbert
structure of $\mathcal H$. 
\begin{align}
\|x_{k+1}-\bar{z}\|^{2} & =\|x_{k+1}-x_{k}+x_{k}-\bar{z}\|^{2}\\
 & =\|x_{k+1}-x_{k}\|^{2}+2\langle x_{k+1}-x_{k},x_{k}-\bar{z}\rangle+\|x_{k}-\bar{z}\|^{2}.\label{E:Liap4}
\end{align}
 b) We have 
\begin{align*}
g_{\bar{z}}^{k+1}-g_{\bar{z}}^{k} & =\Phi(\bar{z})-\Phi(x_{k+1})-\langle\bar{z}-x_{k+1},v_{k+1}\rangle-\Phi(\bar{z})+\Phi(x_{k})+\langle\bar{z}-x_{k},v_{k}\rangle\\
 & =\Phi(x_{k})-\Phi(x_{k+1})+\langle x_{k+1}-x_{k},v_{k}\rangle+\langle x_{k+1}-\bar{z},v_{k+1}-v_{k}\rangle.
\end{align*}
 By convexity of $\Phi$ and $v_{k}\in\partial\Phi(x_{k})$, we have
$\Phi(x_{k})-\Phi(x_{k+1})+\langle x_{k+1}-x_{k},v_{k}\rangle\leq0$,
which gives 
\begin{equation}
g_{\bar{z}}^{k+1}-g_{\bar{z}}^{k}\leq\langle x_{k+1}-\bar{z},v_{k+1}-v_{k}\rangle.\label{E:Liap7}
\end{equation}
 c) Let us show that $A_{k}:=\frac{1}{2\mu}\|x_{k}-\bar{z}\|^{2}+g_{\bar{z}}^{k}$
is a Lyapunov sequence. By \eqref{E:Liap51}, \eqref{E:Liap4}, and
\eqref{E:Liap7} we have 
\begin{align*}
A_{k+1}-A_{k} & \leq\frac{1}{2\mu}\left(\|x_{k+1}-x_{k}\|^{2}+2\langle x_{k+1}-x_{k},x_{k}-\bar{z}\rangle\right)+\langle x_{k+1}-\bar{z},v_{k+1}-v_{k}\rangle\\
 & \leq\frac{1}{2\mu}\|x_{k+1}-x_{k}\|^{2}+\frac{1}{\mu}\left(\langle x_{k+1}-x_{k},x_{k}-\bar{z}\rangle+\langle x_{k+1}-\bar{z},(y_{k+1}-y_{k})-(x_{k+1}-x_{k})\rangle\right)\\
 & \leq\frac{1}{2\mu}\|x_{k+1}-x_{k}\|^{2}+\frac{1}{\mu}\left(-\|x_{k+1}-x_{k}\|^{2}+\langle x_{k+1}-\bar{z},y_{k+1}-y_{k}\rangle\right)\\
 & \leq-\frac{1}{\mu}\|x_{k+1}-x_{k}\|^{2}+\frac{1}{\mu}\langle x_{k+1}-\bar{z},y_{k+1}-y_{k}\rangle\\
 & \leq-\frac{1}{\mu}\|x_{k+1}-x_{k}\|^{2}+\frac{1}{\mu}\langle x_{k}-\bar{z},y_{k+1}-y_{k}\rangle+\frac{1}{\mu}\langle x_{k+1}-x_{k},y_{k+1}-y_{k}\rangle.
\end{align*}
 Let us write (FBN) algorithm as 
\begin{equation}
y_{k+1}-y_{k}=h[(x_{k}-y_{k})-\mu Bx_{k}].\label{E:Liap11}
\end{equation}
 Replacing $y_{k+1}-y_{k}$ by this expression in the above inequality
gives 
\[
A_{k+1}-A_{k}\leq-\frac{1}{\mu}\|x_{k+1}-x_{k}\|^{2}+\frac{h}{\mu}\langle x_{k}-\bar{z},(x_{k}-y_{k})-\mu Bx_{k}\rangle+\frac{1}{\mu}\langle x_{k+1}-x_{k},y_{k+1}-y_{k}\rangle.
\]
 Equivalently 
\begin{equation}
A_{k+1}-A_{k}+\frac{1}{\mu}\|x_{k+1}-x_{k}\|^{2}+h\langle Bx_{k}-B\bar{z},x_{k}-\bar{z}\rangle+h\langle x_{k}-\bar{z},\frac{1}{\mu}(y_{k}-x_{k})+B\bar{z}\rangle\leq\frac{1}{\mu}\langle x_{k+1}-x_{k},y_{k+1}-y_{k}\rangle.\label{E:Liap25}
\end{equation}
 Since $B$ is $\beta$-cocoercive 
\begin{equation}
\langle Bx_{k}-B\bar{z},x_{k}-\bar{z}\rangle\geq\beta\|Bx_{k}-B\bar{z}\|^{2}.\label{E:Liap26}
\end{equation}
 By (\ref{E:Liap51}), we have $\frac{1}{\mu}(y_{k}-x_{k})\in\partial\Phi(x_{k})$.
By definition of $S$, and $\bar{z}\in S$, we have $-B\bar{z}\in\partial\Phi(\bar{z})$.
Hence, by monotonicity of $\partial\Phi$ 
\begin{equation}
\langle x_{k}-\bar{z},\frac{1}{\mu}(y_{k}-x_{k})+B\bar{z}\rangle\geq0.\label{E:Liap27}
\end{equation}
 Combining (\ref{E:Liap25}), (\ref{E:Liap26}), and (\ref{E:Liap27}),
we obtain 
\begin{equation}
A_{k+1}-A_{k}+\frac{1}{\mu}\|x_{k+1}-x_{k}\|^{2}+h\beta\|Bx_{k}-B\bar{z}\|^{2}\leq\frac{1}{\mu}\langle x_{k+1}-x_{k},y_{k+1}-y_{k}\rangle.\label{E:Liap28}
\end{equation}
 By using $\langle x_{k+1}-x_{k},y_{k+1}-y_{k}\rangle\leq\frac{1}{2}\|x_{k+1}-x_{k}\|^{2}+\frac{1}{2}\|y_{k+1}-y_{k}\|^{2}$,
we obtain 
\begin{equation}
A_{k+1}-A_{k}+\frac{1}{2\mu}\|x_{k+1}-x_{k}\|^{2}+h\beta\|Bx_{k}-B\bar{z}\|^{2}\leq\frac{1}{2\mu}\|y_{k+1}-y_{k}\|^{2}.\label{E:Liap29}
\end{equation}
 By (\ref{y-est}), $\sum_{k}\|y_{k+1}-y_{k}\|^{2}<\infty$, and $A_{k}$
is nonnegative. By a standard argument, from (\ref{E:Liap29}), we obtain 

\smallskip{}

\noindent a) $\lim A_{k}$ exists. Since $g^{k}$ is nonnegative,
we have $A_{k}\geq\frac{1}{2\mu}\|x_{k}-\bar{z}\|^{2}$. As a consequence,
the sequence $(x_{k})$ is bounded.

\smallskip{}

\noindent b) $\sum_{k}\|Bx_{k}-B\bar{z}\|^{2}<\infty$. Hence 
\begin{equation}
B(x_{k})\to B\bar{z}\  \ \mbox{ strongly}\label{E:Liap39}
\end{equation}
 where $B\bar{z}$ is uniquely defined for $\bar{z}\in S$.

\smallskip{}

\noindent c) $\sum_{k}\|x_{k+1}-x_{k}\|^{2}<\infty$.

\smallskip{}

\noindent This proves item c) and d) of Theorem \ref{T:main}.

\smallskip{}

\noindent \textbf{Convergence of the sequence $(x_{k})$}. Let us
write (FBN) in the following form 
\begin{equation}
\frac{1}{h\mu}(y_{k+1}-y_{k})+\frac{1}{\mu}(y_{k}-x_{k})+B(x_{k})=0,\label{E:Liap40}
\end{equation}
 with 
\begin{equation}
v_{k}:=\frac{1}{\mu}(y_{k}-x_{k})\in\partial\Phi(x_{k}).\label{E:Liap41}
\end{equation}
 By (\ref{y-est-2}), $y_{k+1}-y_{k}\to0\ \mbox{strongly in } \mathcal H$.
By (\ref{E:Liap39}), $B(x_{k})\to B\bar{z}$ strongly in $\mathcal H$. From
(\ref{E:Liap40}) we deduce that 
\[
y_{k}-x_{k}\to-\mu B\bar{z}
\]
 strongly in $\mathcal H$. Note again that $B\bar{z}$ is uniquely defined
when $\bar{z}\in S$. Since we have already obtained that the sequence
$(y_{k})$ converges weakly, we deduce that the sequence $(x_{k})$
converges weakly, let $x_{k}\rightharpoonup\bar{x}$ weakly. From
(\ref{E:Liap40}) and (\ref{E:Liap41}) we have 
\[
-\frac{1}{h\mu}(y_{k+1}-y_{k})\in(\partial\Phi+B)(x_{k}).
\]
 The operator $A=\partial\Phi+B$ is maximal monotone, and hence is
demi-closed. From $y_{k+1}-y_{k}\rightarrow0$ strongly (\ref{y-est-2}),
$x_{k}\rightharpoonup\bar{x}$ weakly, we deduce that $A(\bar{x})=\partial\Phi(\bar{x})+B(\bar{x})\ni0$,
that is $\bar{x}\in S$.

Let us make precise the relation between the respective limits of
the sequences $(y_{k})$ and $(x_{k})$. Let $y_{k}\rightharpoonup\bar{y}$
weakly. Since $\bar{x}\in S$ we have $B(x_{k})\to B\bar{x}$ strongly
in $\mathcal H$. From (\ref{E:Liap40}), we deduce that $\bar{y}-\bar{x}+\mu B\bar{x}=0$.
Since $B\bar{x}+\partial\Phi(\bar{x})\ni0$, we obtain $\bar{x}+\mu\partial\Phi(\bar{x})\ni\bar{y}$.
Hence $\bar{x}=\mbox{{ {\rm prox}}}_{\mu\Phi}\bar{y}$.

This complete the proof of Theorem \ref{T:main}.
\end{proof}

\begin{remark} { \rm a) Clearly, since $\delta>1$, we can take an arbitrary
$0<h\leq1$. Indeed, the above analysis provides an over-relaxation
result.\\
 b) The above result can be readily extended to the case $h_{k}$
varying with $k$. The convergence of $(y_{k})$ is satisfied under the assumption: there exists some $\epsilon>0$,
such that for all $k\in\mathbb{N}$, \ $0<\epsilon\leq h_{k}\leq\delta-\epsilon$.\\
 c) When the dimension of $\mathcal H$ is finite, by continuity of $\mbox{prox}_{\mu\Phi}$,
and $x_{k}=\mbox{prox}_{\mu\Phi}(y_{k})$ we immediately obtain the
convergence of the sequence $(x_{k})$ to an element of the solution
set $S$. In the infinite dimensional case, this argument does not
work anymore, because of the lack of continuity of the prox mapping
for the weak topology.\\
 d) By analogy with the continuous case, one can reasonably conjecture
that the weak convergence of the sequence $(x_{k})$ holds under the
weaker condition: $0<h<1$ \ and \ $h\mu<2\beta$.}
  \end{remark}

\begin{remark} { \rm Comparing  the numerical performance of the forward-backward algorithms provided by discretization of various dynamical systems is an important issue. 
This is a delicate question, directly related to obtaining rapid numerical methods, a subject of ongoing study,  see \cite {APR2}, \cite{APR}.}
  \end{remark}

\section{Semigroup generated by $-(\partial\Phi+B)$, and FB algorithms}\label{sgroup}

\subsection{Continuous case}

Consider a closely related dynamical system, which is the semigroup
generated by $-A$; $A=\partial\Phi+B$, whose orbits
are the solution trajectories of the differential inclusion 
\begin{equation}
\dot{x}\left(t\right)+\partial\Phi(x(t))+B\left(x\left(t\right)\right)\ni0.\label{sg1}
\end{equation}
Since the operator $A=\partial\Phi+B$ is maximal
monotone, (\ref{sg1}) is relevant to the general theory of semigroups
generated by maximal monotone operators. For any Cauchy data $x_{0}\in{\mbox{dom}\partial\Phi}$,
there exists a unique strong solution of (\ref{sg1}) which satisfies
$x(0)=x_{0}$, see \cite{Br}. Moreover, by a direct adaptation of the results of \cite[Theorem 3.6]{Br}, one can verify that there is a regularizing effect on the initial condition: for 
 $x_{0}\in\overline{\mbox{dom}\Phi}$, there exists a unique strong solution of (\ref{sg1}) with Cauchy data $x(0)= x_0$, and which satisfies $x(t) \in \mbox{dom}\partial \Phi$ for all $t>0$.

 Let us suppose that $S\neq\emptyset$, where $S$
still denotes the set of zeroes of $A=\partial\Phi+B$. 
 Following Baillon-Br\'ezis \cite{BB}, each orbit of (\ref{sg1}) converges
weakly, in an ergodic way, to an equilibrium, which is an element of $S$.
Note that the convergence theory of Bruck does apply separately to
$\partial\Phi$ and $B$, which are demipositive, see \cite{Bruck}.
But it is not known if the sum of the two operators $\partial\Phi+B$
is still demipositive. Indeed, it is not clear whether this notion
is stable by sum. Thus, we are naturally led to perform a direct study
of the convergence properties of the orbits of (\ref{sg1}). Surprisingly,
we have not found references to a previous systematic study of this
question. Indeed, we are going to show that (\ref{sg1}) has convergence
properties which are similar to the regularized Newton-like dynamic.
Then, we shall compare and show the differences between the two systems.

\begin{theorem}\label{sg2} Suppose that $S\neq\emptyset$. Then, for
any orbit $x(\cdot)$ of {\rm(\ref{sg1})}, the following properties hold:

\smallskip{}

1. $\int_{0}^{+\infty}\left\Vert \dot{x}\left(t\right)\right\Vert ^{2}dt<+\infty$,
i.e., $x(\cdot)$ has a finite energy.

\vspace{1mm}

2. \ $x(\cdot)$ converges weakly to an element of $S$.

\vspace{1mm}

3. \ $B(x(\cdot))$ converges strongly to $Bz$, where $Bz$ is uniquely
defined for $z\in S$. 
\end{theorem}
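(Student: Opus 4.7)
The strategy mirrors the Lyapunov analysis from Proposition \ref{estim-x}, but in the semigroup setting a much simpler Lyapunov function suffices, since there is no auxiliary variable $v$ to track. Fix $z\in S$ and consider $\Gamma_z(t):=\tfrac{1}{2}\|x(t)-z\|^{2}$. Writing the inclusion as $\dot x(t)+v(t)+B(x(t))=0$ with $v(t)\in\partial\Phi(x(t))$, and choosing $\xi\in\partial\Phi(z)$ with $\xi+Bz=0$, the computation
\[
\tfrac{d}{dt}\Gamma_z(t) = -\langle v(t)-\xi,\,x(t)-z\rangle - \langle B(x(t))-Bz,\,x(t)-z\rangle
\]
combined with the monotonicity of $\partial\Phi$ and the $\beta$-cocoercivity of $B$ yields
\[
\tfrac{d}{dt}\Gamma_z(t)+\beta\|B(x(t))-Bz\|^{2}\le 0.
\]
This single inequality delivers at once that $x(\cdot)$ is bounded, that $\lim_{t\to+\infty}\|x(t)-z\|$ exists for every $z\in S$ (condition (i) of Opial), and that $B(x(\cdot))-Bz\in L^{2}([0,+\infty[)$.

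For item 1, I take the scalar product of the differential inclusion with $\dot x(t)$. Lemma \ref{deriv-chain} gives $\langle v(t),\dot x(t)\rangle=\tfrac{d}{dt}\Phi(x(t))$. The non-potential term is handled by writing $B(x(t))=(B(x(t))-Bz)+Bz$, so that
\[
\|\dot x(t)\|^{2}+\tfrac{d}{dt}\bigl[\Phi(x(t))+\langle Bz,x(t)\rangle\bigr]+\langle B(x(t))-Bz,\dot x(t)\rangle=0.
\]
Young's inequality absorbs the cross term by $\tfrac{1}{2}\|\dot x(t)\|^{2}$, while the subdifferential inequality $\Phi(x(t))+\langle Bz,x(t)\rangle\ge\Phi(z)+\langle Bz,z\rangle$ (valid because $-Bz\in\partial\Phi(z)$) gives a uniform lower bound on the potential-like term. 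Integrating and using the $L^{2}$-estimate on $B(x)-Bz$ already obtained, we deduce $\dot x\in L^{2}([0,+\infty[)$.

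Item 3 follows by a direct application of Lemma \ref{lm:aux} with $F(t):=\tfrac{1}{2}\|B(x(t))-Bz\|^{2}$. We already have $F\in L^{1}$; the Lipschitz continuity of $B$ (Lemma \ref{coco-lip}) gives $\|\tfrac{d}{dt}B(x(t))\|\le\beta^{-1}\|\dot x(t)\|$, so Cauchy--Schwarz yields $|F'(t)|\le\beta^{-1}\|B(x(t))-Bz\|\cdot\|\dot x(t)\|$, which lies in $L^{1}$ as the product of two $L^{2}$ functions. Hence $F(t)\to 0$.

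The main obstacle is item 2, namely Opial's condition (ii): every weak sequential cluster point $\bar x$ of $x(\cdot)$ must belong to $S$. The key tool is the Br\'ezis regularity theorem for semigroups generated by a maximal monotone operator, which ensures that $t\mapsto\|\dot x(t)\|$ is nonincreasing on $]0,+\infty[$. Combined with $\dot x\in L^{2}$, this forces $\|\dot x(t)\|\to 0$ as $t\to+\infty$ (otherwise $\int_{0}^{t}\|\dot x(s)\|^{2}ds\ge t\|\dot x(t)\|^{2}$ would diverge). Then for any sequence $t_{n}\to+\infty$ with $x(t_{n})\rightharpoonup\bar x$, we have $-\dot x(t_{n})\in A(x(t_{n}))$, $\dot x(t_{n})\to 0$ strongly, and the demiclosedness of the maximal monotone operator $A=\partial\Phi+B$ yields $0\in A(\bar x)$, i.e.\ $\bar x\in S$. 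Opial's Lemma \ref{Opial} then delivers the announced weak convergence.
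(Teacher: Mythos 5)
Your proof is correct and follows essentially the same route as the paper's: the same Lyapunov function $\tfrac{1}{2}\|x(t)-z\|^{2}$, the same cocoercivity estimate giving $B(x)-Bz\in L^{2}$, the same appeal to the nonincreasing norm of the minimal section of the semigroup together with demiclosedness of $A$ for Opial's condition (ii), and the same use of Lemma \ref{lm:aux} for item 3. The only difference is cosmetic: for the finite-energy estimate you test the inclusion with $\dot x$ and use Young's inequality plus the already-established $L^{2}$ bound on $B(x)-Bz$, whereas the paper substitutes $B(x)-Bz=-(\dot x+v+Bz)$ into the same differential inequality and expands the square; both hinge on the chain rule of Lemma \ref{deriv-chain} and the lower bound $\Phi(x)+\langle Bz,x\rangle\geq\Phi(z)+\langle Bz,z\rangle$.
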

\begin{proof} Let $x\left(\cdot\right): [0,+\infty [  \rightarrow \mathcal H$
be an orbit of (\ref{sg1}). Equivalently, we set 
\begin{align}
 & v(t)\in\partial\Phi(x(t))\label{sg30}\\
 & \dot{x}\left(t\right)+v(t)+B\left(x\left(t\right)\right)=0.\label{sg31}
\end{align}
 For any $z\in S$, let us define $h_{z}: [0,+\infty [\rightarrow{\mathbb{R}}^{+}$
by 
\[
h_{z}\left(t\right):=\frac{1}{2}\left\Vert x\left(t\right)-z\right\Vert ^{2}.
\]
 Let us show that $h_{z}$ is a Lyapunov function. By the classical
derivation chain rule, and (\ref{sg31}), for almost all $t\geq0$
\begin{align}
\frac{d}{dt}h_{z}\left(t\right) & =\left\langle x\left(t\right)-z,\dot{x}\left(t\right)\right\rangle \label{sg4}\\
 & =-\left\langle x\left(t\right)-z,v(t)+B\left(x\left(t\right)\right)\right\rangle .
\end{align}
 Let us rewrite this last equality as 
\begin{equation}
\frac{d}{dt}h_{z}\left(t\right)+\left\langle x\left(t\right)-z,v(t)+Bz\right\rangle +\left\langle x\left(t\right)-z,B\left(x\left(t\right)\right)-Bz\right\rangle =0.\label{sg5}
\end{equation}
 Since $z\in S$, we have $-Bz\in\partial\Phi(z)$. Moreover, $v(t)\in\partial\Phi(x(t))$.
By monotonicity of $\partial\Phi$, this gives 
\begin{equation}
\left\langle x\left(t\right)-z,v(t)+Bz\right\rangle \geq0.\label{sg6}
\end{equation}
 Combining (\ref{sg5}) with (\ref{sg6}) we obtain 
\begin{equation}
\frac{d}{dt}h_{z}\left(t\right)+\left\langle x\left(t\right)-z,B\left(x\left(t\right)\right)-Bz\right\rangle \leq0.\label{sg7}
\end{equation}
 By cocoercivity of $B$, we deduce that 
\begin{equation}
\frac{d}{dt}h_{z}\left(t\right)+\beta\left\Vert B\left(x\left(t\right)\right)-Bz\right\Vert ^{2}\leq0.\label{sg8}
\end{equation}
 From this, we readily obtain that 
\begin{equation}
t\mapsto h_{z}\left(t\right)\ \mbox{is a decreasing function},\label{sg9}
\end{equation}
 and after integration of (\ref{sg8}) 
\begin{equation}
\int_{0}^{+\infty}\left\Vert B\left(x\left(t\right)\right)-Bz\right\Vert ^{2}dt<+\infty.\label{sg10}
\end{equation}
 Let us return to (\ref{sg8}). By (\ref{sg31}), we equivalently
have 
\begin{equation}
\frac{d}{dt}h_{z}\left(t\right)+\beta\left\Vert \dot{x}\left(t\right)+v(t)+Bz\right\Vert ^{2}\leq0.\label{sg11}
\end{equation}
 After developing 
\begin{equation}
\frac{d}{dt}h_{z}\left(t\right)+\beta\left\Vert \dot{x}(t)\right\Vert ^{2}+\beta\left\Vert v(t)+Bz\right\Vert ^{2}+2\beta\left\langle \dot{x}(t),v(t)+Bz\right\rangle \leq0.\label{sg12}
\end{equation}
 By using the derivation chain rule for a convex lower semicontinuous
function, see Lemma \ref{deriv-chain} 
\begin{equation}
\frac{d}{dt}\Phi\left(x\left(t\right)\right)=\left\langle \upsilon\left(t\right),\dot{x}\left(t\right)\right\rangle,\label{sg13}
\end{equation}
 we can rewrite (\ref{sg12}) as 
\begin{equation}
\frac{d}{dt}\left[h_{z}\left(t\right)+2\beta\Phi\left(x(t)\right)+2\beta\left\langle x(t),Bz\right\rangle \right]+\beta\left\Vert \dot{x}(t)\right\Vert ^{2}+\beta\left\Vert v(t)+Bz\right\Vert ^{2}\leq0.\label{sg14}
\end{equation}
 Set 
\begin{equation}
k_{z}\left(t\right):=\Phi\left(x(t)\right)-\Phi(z)+\left\langle Bz,x(t)-z\right\rangle .\label{sg15}
\end{equation}
 Since $-Bz\in\partial\Phi(z)$, by the convex subdifferential inequality,
we have $k_{z}\left(t\right)\geq0$. Let us rewrite (\ref{sg14})
as 
\begin{equation}
\frac{d}{dt}\left[h_{z}\left(t\right)+2\beta k_{z}\left(t\right)\right]+\beta\left\Vert \dot{x}(t)\right\Vert ^{2}+\beta\left\Vert v(t)+Bz\right\Vert ^{2}\leq0.\label{sg16}
\end{equation}
 Since $h_{z}+2\beta k_{z}$ is nonnegative, by integration of (\ref{sg16})
we infer 
\begin{equation}
\int_{0}^{+\infty}\left\Vert \dot{x}\left(t\right)\right\Vert ^{2}dt<+\infty.\label{sg17}
\end{equation}
 That's item 1. In order to prove item 2., which is the weak convergence
property of $x$, we use Opial's lemma \ref{Opial}, with $S$ equal to the solution set of problem (\ref{basicpb}).
 By (\ref{sg9}), $t\mapsto h_{z}\left(t\right)$
is a decreasing function, and hence $\lim\left\Vert x\left(t\right)-z\right\Vert $
exists. Let us complete the verification of the hypothesis of Opial's
lemma, by showing that every weak sequential cluster point of $x$
belongs to $S$. Let $\bar{x}=w-\lim x(t_{n})$ for some sequence
$t_{n}\rightarrow+\infty$. By the general theory of semigroups generated
by maximal monotone operators, we have that 
\begin{equation}
t\mapsto\left\Vert (\partial\Phi(x(t))+B(x(t)))^{0}\right\Vert \label{sg18}
\end{equation}
 is a nonincreasing function, where $(\partial\Phi(x(t))+B(x(t)))^{0}$
is the element of minimal norm of the closed convex set $\partial\Phi(x(t))+B(x(t))$.
Since $\dot{x}(t)=-(\partial\Phi(x(t))+B(x(t)))^{0}$ for almost all
$t\geq0$, we deduce from (\ref{sg17}) that 
\begin{equation}
\int_{0}^{+\infty}\left\Vert (\partial\Phi(x(t))+B(x(t)))^{0}\right\Vert ^{2}dt<+\infty.\label{sg19}
\end{equation}
 Since $t\mapsto\left\Vert (\partial\Phi(x(t))+B(x(t)))^{0}\right\Vert $
is nonincreasing, it converges and, by (\ref{sg19}) its limit is equal
to zero. Thus, by taking $w(t)=(\partial\Phi(x(t))+B(x(t)))^{0}$,
we have obtained the existence of a mapping $w$ which verifies: $w(t)\in(\partial\Phi+B)(x(t)$
for all $t>0$, and $w(t)\to 0$ strongly in $\mathcal H$, as $t\to +\infty$.
From $w(t_{n})\in(\partial\Phi+B)(x(t_{n})$, by the demiclosedness
property of the maximal monotone operator $A=\partial\Phi+B$, we
obtain $A(\bar{x})=\partial\Phi(\bar{x})+B(\bar{x})\ni0$, that is
$\bar{x}\in S$.

Let us now prove item 3. Set $F_{3}(t)=\left\Vert B(x(t))-Bz\right\Vert $.
By (\ref{sg10}) $F_{3}\in L^{2}\left(\left[0,+\infty\right[\right).$
Since $B$ is Lipschitz continuous and $\dot{x}\in L^{2}\left(\left[0,+\infty\right[\right)$
we obtain that $\frac{d}{dt}F_{3}\in L^{2}([0,+\infty[)$. By Lemma
\ref{lm:aux}, we deduce that $\lim_{t\rightarrow+\infty}F_{3}(t)=0$,
which is our claim. 
\end{proof}

\begin{remark} {\rm The strong convergence of orbits  falls within the general theory of
 semigroup of contractions generated by a maximal monotone operator $A$. It is satisfied if $A$ is strongly monotone, or $\Phi$ boundedly inf-compact (note that in the proof of Theorem
\ref{sg2} we have shown that $\Phi (x(t))$ converges, and thus is bounded).
Also note that, following \cite[Theorem 3.13]{Br}, if
$\textrm{int}S=\textrm{int}A^{-1}(0) \neq \emptyset$,  then (\ref{sg1}) has orbits whose total variation is bounded, and hence which converge strongly.}
\end{remark}

\begin{remark} {  \rm Let us compare the asymptotic behavior of the orbits
of the semigroup generated by $-(\partial\Phi+B)$ with the orbits of
the Newton-like regularized system. Since both converge weakly to
equilibria, the point is compare their rate of convergence. For simplicity
take $B=0$, and $\Phi$ convex differentiable. Thus the point is:
at which rate does $\nabla\Phi(x(t)$ converges to zero?

\noindent a) For the semigroup, the standard estimation is the linear
convergence: 
\[
\|\nabla\Phi(x(t)\|\leq\frac{C}{t}.
\]
 Indeed, without any further assumption on $\Phi$ or $\mathcal H$, this is
the best known general estimate. Indeed, in infinite dimensional spaces
one can exhibit orbits of the gradient flow which have infinite length,
this is a consequence of Baillon counterexample \cite{Ba}. Note that,
in finite dimensional spaces, the corresponding result is not known
\cite{Dani}.

\smallskip{}

\noindent b) For the Newton-like regularized system, $v(t)=\nabla\Phi(x(t)$
satisfies the differential equation 
\[
\lambda(t)\dot{x}\left(t\right)+\dot{\upsilon}\left(t\right)+\upsilon\left(t\right)=0.
\]
 By taking $\lambda(t)=ce^{-t}$, we have the following estimation
(see \cite[Proposition 5.1]{AS}) 
\[
\|\nabla\Phi(x(t)\|\leq Ce^{-t}.
\]
 These results naturally suggest to extend our results to the case
of a vanishing regularization parameter. }
\end{remark}
	
\subsection{Implicit/explicit time discretization: FB algorithm}

The discretization of (\ref{sg1})
with respect to the time variable $t$, in an implicit way with respect to the nonsmooth term $\partial \Phi$, and explicit with respect to the smooth term $B$, and with constant step size $h>0$,
gives 
\begin{equation}
\frac{x_{k+1}-x_{k}}{h}+\partial \Phi(x_{k+1})+ B(x_{k}) \ni 0.
\end{equation}
Equivalently
\begin{equation}
x_{k+1} = \left( I + h \partial \Phi\right)^{-1}\left( x_k  - h B(x_{k})\right).
\end{equation}
This is the classical forward-backward algorithm, whose convergence has been well established. 
The weak convergence of $(x_k)$  to an element of $S$ is obtained under the stepsize limitation: $0 < h < 2\beta$.
One can consult \cite{LM}, \cite[Theorem 25.8]{BC}, for the proof, and some further extensions of this result.

\section{Proximal-gradient dynamics and relaxed FB algorithms}\label{pg. section}
First recall some standard facts about the  continuous gradient-projection  system. This will lead us to consider a more general proximal-gradient dynamic. 
Then we will examine the corresponding relaxed FB algorithms, obtained by time discretization.

\subsection{Gradient-projection dynamics}
First take $\Phi = \delta_C$  equal to the indicator function of a closed convex set $C \subset \mathcal H$, and $B = \nabla \Psi$, 
the gradient of a convex differentiable function $\Psi: \mathcal H \to \mathbb R$.
The  semigroup of contractions, generated by $-A$, which has been studied in the previous section,  specializes in gradient-projection system
 \begin{equation}
\dot{x} (t) = \mbox{{\rm proj}}_{T_C (x(t))}\left(  - \nabla \Psi (x(t))  \right),
\end{equation} 
where $T_C (x)$ is the tangent cone to $C$ at $x\in C$. This is a direct consequence of the lazy property satisfied by the orbits of the  semigroup of contractions, generated by $-A$, see
\cite{Br}, and of the Moreau decomposition theorem in a Hilbert space (with respect to the tangent cone $T_C (x)$ and its polar cone $N_C (x)$).
From the perspective of optimization, this system has several  drawbacks. 
The orbits ignore the constraint until
they meet the boundary of $C$. Moreover, the vector field which governs the dynamic is discontinuous (at the boundary of the constraint).
The following system first considered by Antipin \cite{Ant}, and Bolte \cite{Bo} overcomes some of these difficulties:
\begin{equation}\label{gp00}
  \dot{x}(t) + x(t) - \mbox{{\rm proj}}_C  \left( x(t) - \mu \nabla \Psi (x(t))  \right) =0.
\end{equation}
It can be introduced in a natural way, by rewriting the optimality condition
 \begin{equation}\label{gp6}
\nabla \Psi (x) + N_C (x) \ni 0 
\end{equation}
as a fixed point problem
\begin{equation}\label{gp7}
 x - \mbox{{\rm proj}}_C \left( x - \mu \nabla \Psi (x)  \right)=0,
\end{equation}
where $\mu$ is a positive parameter (arbitrarily chosen). 
Note that the stationary points of (\ref{gp00}) are precisely the solutions of (\ref{gp7}).
This dynamic is  governed by a Lipschitz continuous vector field,  and the orbits are  classical solutions, i.e., 
continuously differentiable.
Its properties are 
 summarized in the following proposition, see \cite{Bo}.
 \begin{proposition} \label{gp-Thm2} 
Let  $\Psi:  \mathcal H \rightarrow \mathbb R$ be a convex differentiable function, whose gradient is Lipschitz continuous on bounded sets. Let $C$
  be a closed convex set in $\mathcal H$, and suppose that  $\Psi$ is bounded from below on $C$. 
Then, for any $x_0  \in \mathcal H$, there exists a unique classical global solution $x: [0, +\infty[ \rightarrow \mathcal H$  of the Cauchy problem for the relaxed gradient-projection dynamical system
\begin{equation}\label{gp9}
\left\{
\begin{array}{l}
 \dot{x}(t) + x(t) - \mbox{{\rm proj}}_C \left( x(t) - \mu \nabla \Psi (x(t))  \right)=0; \\
x(0)=x_0.
\end{array}
\right.
\end{equation}
The following asymptotic properties are satisfied:

\smallskip

i) If $S=  {\rm \mbox{argmin}}_C \Psi$ is nonempty,
then $x(t)$  converges weakly to some $x_{\infty} \in S$, as $t \rightarrow + \infty$.

\smallskip

ii) If moreover $x_0  \in C$, then $x(t) \in C$ for all $t \geq 0$,     $\Psi (x(t)$ decreases to $\inf _C \Psi$ as $t$ increases to $+ \infty$, and  
\begin{equation}\label{gp10}
  \mu \frac{d}{dt}\Psi(x(t)) + \| \dot{x}(t) \|^2  \leq 0.
\end{equation}
\end{proposition}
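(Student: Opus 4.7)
The plan is to establish the existence/uniqueness part by Cauchy-Lipschitz, then read off invariance of $C$ and the energy inequality of (ii) directly from the ODE, and finally apply Opial's lemma for the weak convergence (i) with $S = \mbox{argmin}_C \Psi$. Write the dynamic as $\dot{x}(t) = -x(t) + F(x(t))$ with $F(x) := \mbox{proj}_C(x - \mu \nabla \Psi(x))$. Since $\mbox{proj}_C$ is $1$-Lipschitz and $\nabla \Psi$ is Lipschitz on bounded sets, the vector field is locally Lipschitz, so the Cauchy-Lipschitz theorem will yield a unique classical solution on a maximal interval $[0, T_{\max}[$; the a priori bounds from the Lyapunov analysis below should force $T_{\max} = +\infty$.

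\textbf{Part (ii).} When $x_0 \in C$, I would use the variation-of-constants representation
\[
x(t) = e^{-t} x_0 + \int_0^t e^{s-t} F(x(s))\, ds,
\]
together with $F(x(s)) \in C$, convexity of $C$, and $e^{-t} + \int_0^t e^{s-t}\,ds = 1$, to conclude $x(t) \in C$. The projection characterization
\[
\langle x(t) - \mu \nabla \Psi(x(t)) - F(x(t)),\, c - F(x(t)) \rangle \leq 0 \qquad \text{for all } c \in C,
\]
tested at the admissible point $c = x(t)$ and combined with $\dot{x}(t) = F(x(t)) - x(t)$, then produces $\|\dot{x}(t)\|^2 \leq -\mu\,\tfrac{d}{dt}\Psi(x(t))$, which is the announced energy inequality. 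Hence $t \mapsto \Psi(x(t))$ decreases; being bounded below on $C$, it converges, and identification of the limit with $\inf_C \Psi$ will follow a posteriori from part (i) by weak lower semicontinuity of $\Psi$.

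\textbf{Part (i).} A point $z$ lies in $S$ iff the optimality condition (\ref{gp6}) holds, equivalently iff $z = F(z)$. For any such $z$, setting $h_z(t) := \tfrac{1}{2}\|x(t) - z\|^2$,
\[
\tfrac{d}{dt} h_z(t) = -\langle x(t) - F(x(t)), x(t) - z \rangle.
\]
Once $F$ is known to be nonexpansive --- which, by cocoercivity of $\nabla \Psi$ (Lemma~\ref{l:BH}) combined with firm nonexpansiveness of $\mbox{proj}_C$, holds for $\mu$ in a suitable range depending on the Lipschitz constant of $\nabla \Psi$ on a ball containing the orbit --- the elementary identity $2\langle x - F(x), x - z\rangle \geq \|x - F(x)\|^2$, valid for any nonexpansive $F$ with fixed point $z$, gives
\[
\tfrac{d}{dt} h_z(t) + \tfrac{1}{2} \|\dot{x}(t)\|^2 \leq 0.
\]
Integration then yields both convergence of $\|x(t)-z\|$ (Opial's condition (i)) and $\dot{x} \in L^2([0,+\infty[)$. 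To verify Opial's second condition, pick a weak cluster point $\bar{x} = w\text{-}\lim x(t_n)$; the $L^2$ estimate on $\dot{x}$ lets one extract a subsequence with $\dot{x}(t_n) \to 0$ strongly, so that $x(t_n) - F(x(t_n)) \to 0$, and demiclosedness at zero of the nonexpansive operator $I - F$ delivers $\bar{x} = F(\bar{x}) \in S$. Opial's lemma~\ref{Opial} then concludes.

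\textbf{Main obstacle.} The delicate point is a small circularity: nonexpansiveness of $F$ is needed to run the Lyapunov analysis that bounds the orbit, while only a local Lipschitz constant of $\nabla\Psi$ is available. I would resolve this by a bootstrap on the maximal interval of existence: fix $z \in S$, observe that $h_z$ is nonincreasing along any solution as long as $F$ is nonexpansive on the current sublevel ball, choose $\mu$ small enough so that $F$ is nonexpansive on a ball containing $x_0$ and $z$, and extend the solution step by step while maintaining the containment --- this simultaneously precludes blow-up and legitimizes all the estimates used above.
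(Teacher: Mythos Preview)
The paper itself does not prove this proposition; it merely summarizes results from Bolte~\cite{Bo}, and then proves the more general Theorem~\ref{pg-Thm3}. The relevant comparison is therefore with the method of \cite{Bo}, which is reproduced in part~2 of the proof of Theorem~\ref{pg-Thm3}.

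Your ``main obstacle'' is real and your proposed bootstrap does not resolve it. The proposition is stated for an \emph{arbitrary} fixed $\mu>0$ (see the sentence after (\ref{gp7})), so you are not free to ``choose $\mu$ small enough.'' Nonexpansiveness of $F=\mbox{proj}_C\circ(I-\mu\nabla\Psi)$ needs $0<\mu\le 2/L$ with $L$ the Lipschitz constant of $\nabla\Psi$ on the ball in question; for large $\mu$ this simply fails, and no bootstrap on the interval of existence can manufacture it, since the ball, its Lipschitz constant, and $\mu$ are all fixed from the outset. The approach that works for every $\mu>0$ (this is precisely what \cite{Bo} does, and what the paper reproduces in part~2 of Theorem~\ref{pg-Thm3}) replaces $h_z(t)=\tfrac12\|x(t)-z\|^2$ by the Bregman-type function
\[
E(t,z)=\tfrac{1}{2\mu}\|x(t)-z\|^2+\Psi(x(t))-\Psi(z)-\langle\nabla\Psi(z),x(t)-z\rangle,
\]
which is nonincreasing without any restriction on $\mu$. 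Since $\lim\|x(t)-z\|$ is then not directly available, Opial's lemma is bypassed: one shows uniqueness of the weak cluster point by evaluating $\lim_{t\to\infty}\bigl(E(t,z_1)-E(t,z_2)\bigr)$ for two cluster points $z_1,z_2\in S$, which forces $z_1=z_2$.

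A smaller technical gap: from $\dot x\in L^2(0,\infty)$ alone you cannot, for a prescribed sequence $t_n\to\infty$, extract a subsequence with $\dot x(t_n)\to 0$ (narrow $L^2$ spikes can sit on any prescribed sequence). One instead shows $\dot x(t)\to 0$ for all $t$, via Lemma~\ref{lm:aux}, using that $\tfrac{d}{dt}\dot x$ is also in $L^2$ thanks to the Lipschitz vector field and the boundedness of the orbit.
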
 

\subsection{Proximal-gradient dynamics}
Let us now return to our setting: $A= \partial \Phi + B$, where $\Phi$ is a closed convex proper function, and 
$B$ is a monotone cocoercive operator (the previous case corresponds to  $\Phi = \delta_C$ and  $B = \nabla \Psi$, with $\nabla \Psi$ Lipschitz continuous).
As a natural extension of (\ref{gp00}), let us consider the differential system
\begin{equation}\label{pg00}
  \dot{x}(t) + x(t) - \mbox{{\rm prox}}_{\mu \Phi}  \left( x(t) - \mu B(x(t))  \right) =0.
\end{equation}
We shall see that the explicit discretization of this system gives the relaxed FB algorithm.
The vector field which governs (\ref{pg00}) is Lipschitz continuous. Hence, for any $x_0 \in \mathcal H$, the corresponding Cauchy problem has a unique global classical solution.
As far as we know, the convergence properties of this system have not been studied in this framework.
 Let us state our results.

\begin{theorem} \label{pg-Thm3} 
Let   $\Phi:\mathcal H\rightarrow\mathbb{R}\cup\left\{ +\infty\right\} $  be a convex lower semicontinuous proper function, and
$B$ a maximal monotone operator which is $\beta$-cocoercive.
Suppose that  $S=\left\{ z\in \mathcal H;\;\partial\Phi(z)+Bz\ni0\right\} $, the solution
set of {\rm(\ref{basicpb})}, is  nonempty.

For any $x_0  \in \mathcal H$,  let $x: [0, +\infty[ \rightarrow \mathcal H$  be the unique classical global solution of the Cauchy problem for the proximal-gradient dynamical system
\begin{equation}\label{pg9}
\left\{
\begin{array}{l}
 \dot{x}(t) + x(t) - \mbox{{\rm prox}}_{\mu \Phi}  \left( x(t) - \mu B(x(t))  \right)=0; \\
x(0)=x_0.
\end{array}
\right.
\end{equation}
 Then,  the following asymptotic properties are satisfied:

\smallskip

1. Suppose that \ $0< \mu < 4 \beta$, then
 
 i) $x(t)$  converges weakly to some $x_{\infty} \in S$, as $t \rightarrow + \infty$.

\smallskip

ii)  $B(x(t))$ converges strongly to $Bz$ as $t \rightarrow + \infty$,   where $Bz$ is uniquely defined for $z \in S$.

\smallskip

iii) $\lim_{t\to+\infty}\dot{x}(t)   = 0$ and $\int_0^{\infty} \| \dot{x}(t) \|^2  dt < + \infty$.

\smallskip

2. Suppose that $B= \nabla \Psi$, where $\Psi$ is a convex differentiable function. Then, for arbitrary $ \mu > 0$, the above properties i), ii), iii) are satisfied.

\end{theorem}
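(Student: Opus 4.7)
The plan is to exploit the fact that $S$ coincides with the fixed point set of the forward-backward operator $T(x):=\mbox{prox}_{\mu\Phi}(x-\mu B(x))$, so that the dynamic reads $\dot x(t)=T(x(t))-x(t)$. Since $\mbox{prox}_{\mu\Phi}$ is firmly nonexpansive and $B$ is $\beta^{-1}$-Lipschitz by Lemma \ref{coco-lip}, the vector field is globally Lipschitz on $\mathcal H$ and the Cauchy-Lipschitz theorem yields a unique global classical solution. The asymptotic analysis will be driven by the Lyapunov function $h_z(t):=\tfrac{1}{2}\|x(t)-z\|^2$ for $z\in S$, and finalized using Opial's Lemma \ref{Opial} together with Lemma \ref{lm:aux}.

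The crux is the following Lyapunov estimate. Differentiating $h_z$, writing $z=T(z)$, invoking the firm nonexpansiveness of $\mbox{prox}_{\mu\Phi}$ at the points $x(t)-\mu B(x(t))$ and $z-\mu Bz$, and combining it with the identity
\[
\|T(x)-z\|^2 = \|\dot x\|^2 + 2\langle \dot x, x-z\rangle + \|x-z\|^2,
\]
one obtains $\langle \dot x, x-z\rangle \le -\|\dot x\|^2 - \mu\langle T(x)-z, B(x)-Bz\rangle$. Splitting $T(x)-z=\dot x + (x-z)$ and using the $\beta$-cocoercivity of $B$ on the $\langle x-z, B(x)-Bz\rangle$ part gives
\[
\frac{d}{dt}h_z(t)\;\le\;-\|\dot x(t)\|^2-\mu\langle \dot x(t),B(x(t))-Bz\rangle-\mu\beta\|B(x(t))-Bz\|^2.
\]
After Cauchy-Schwarz, the right-hand side is majorized by the quadratic form $-\|\dot x\|^2+\mu\|\dot x\|\|B(x)-Bz\|-\mu\beta\|B(x)-Bz\|^2$, whose discriminant $\mu^2-4\mu\beta=\mu(\mu-4\beta)$ is strictly negative precisely when $0<\mu<4\beta$. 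Hence, in that range, there exists $c>0$ such that
\[
\frac{d}{dt}h_z(t)\le -c\bigl(\|\dot x(t)\|^2+\|B(x(t))-Bz\|^2\bigr),
\]
so $h_z$ is decreasing (and converges), while $\dot x$ and $B(x)-Bz$ both lie in $L^2(0,\infty;\mathcal H)$.

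From here the conclusions follow along lines already used in the paper. Applying Lemma \ref{lm:aux} to $F(t)=\|B(x(t))-Bz\|^2$, whose derivative is controlled via the $\beta^{-1}$-Lipschitz continuity of $B$ and $\dot x\in L^2$, yields the strong convergence of item ii); applying it to $\|\dot x\|^2$, whose derivative is controlled via the Lipschitz continuity of $T$ (bounding $\ddot x$ by a constant times $\|\dot x\|$), yields $\dot x(t)\to 0$, completing iii). For item i), one verifies Opial: the existence of $\lim\|x(t)-z\|$ follows from monotonicity of $h_z$; and if $x(t_n)\rightharpoonup\bar x$, then $T(x(t_n))=x(t_n)+\dot x(t_n)\rightharpoonup\bar x$, so the inclusion $\mu^{-1}(x(t_n)-T(x(t_n)))-B(x(t_n))+B(T(x(t_n)))\in (\partial\Phi+B)(T(x(t_n)))$ (with left-hand side tending strongly to $0$) combined with the demiclosedness of $\partial\Phi+B$ forces $\bar x\in S$. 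For part 2, when $B=\nabla\Psi$, the previous estimate would only give $\mu<4\beta$; the restriction is removed by replacing $h_z$ by a descent analysis of $F=\Phi+\Psi$ (or of the forward-backward envelope $E(x)=\Phi(T(x))+\Psi(x)+\langle\nabla\Psi(x),T(x)-x\rangle+\tfrac{1}{2\mu}\|T(x)-x\|^2$) along the trajectory: the optimality characterization $\mu^{-1}(x-T(x))-\nabla\Psi(x)\in\partial\Phi(T(x))$ coupled with convexity of $\Phi$ and $\Psi$ produces a descent inequality valid for arbitrary $\mu>0$, since the cocoercivity inequality is not invoked. The main obstacle is the sharpness of the range $\mu<4\beta$: a naive reduction based on the nonexpansiveness of $I-\mu B$ only reaches $\mu<2\beta$, and getting to $4\beta$ genuinely requires combining the firm nonexpansiveness of the prox with the cocoercivity of $B$ through the discriminant condition above.
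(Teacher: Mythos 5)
Your treatment of Part 1 is correct and is essentially the paper's argument in a light disguise: the firm nonexpansiveness of $\mbox{prox}_{\mu\Phi}$ applied at $x(t)-\mu B(x(t))$ and $z-\mu Bz$ is equivalent to the paper's use of the monotonicity of $\partial\Phi$ on the inclusion $-B(x(t))-\tfrac{1}{\mu}\dot x(t)\in\partial\Phi(x(t)+\dot x(t))$ together with $-Bz\in\partial\Phi(z)$, and both lead to the same differential inequality $\frac{d}{dt}h_z\le -\|\dot x\|^2-\mu\langle\dot x,B(x)-Bz\rangle-\mu\beta\|B(x)-Bz\|^2$. Your discriminant condition $\mu^2-4\mu\beta<0$ is exactly the paper's requirement that a Young parameter $\alpha$ exist with $\tfrac{\mu}{2}<\alpha<2\beta$; the subsequent use of Lemma \ref{lm:aux} and Opial's lemma (your demiclosedness argument for $\partial\Phi+B$ at $T(x(t_n))$ is a valid variant of the paper's demiclosedness of $I-\mbox{prox}_{\mu\Phi}$) matches the paper.

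Part 2, however, has a genuine gap. The paper's mechanism for removing the restriction $\mu<4\beta$ is to observe that when $B=\nabla\Psi$ the cross term is an exact time derivative, $\langle\dot x(t),\nabla\Psi(x(t))-\nabla\Psi(z)\rangle=\frac{d}{dt}\bigl[\Psi(x(t))-\langle\nabla\Psi(z),x(t)\rangle\bigr]$, so it is absorbed into the nonnegative Lyapunov function $E(t,z)=\tfrac{1}{2\mu}\|x(t)-z\|^2+\Psi(x(t))-\Psi(z)-\langle\nabla\Psi(z),x(t)-z\rangle$; no Cauchy--Schwarz/discriminant step is needed, hence no constraint on $\mu$, and one still retains $\dot x\in L^2$ and $\|B(x)-Bz\|\in L^2$. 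Crucially, one then loses the existence of $\lim\|x(t)-z\|$, and Opial's condition (i) must be replaced: the paper takes two weak cluster points $z_1,z_2\in S$, uses that $E(t,z_1)-E(t,z_2)$ converges, evaluates along the two subsequences, and deduces $\tfrac{1}{\mu}\|z_1-z_2\|^2+\langle\nabla\Psi(z_2)-\nabla\Psi(z_1),z_2-z_1\rangle=0$, hence $z_1=z_2$. Your sketch contains neither ingredient. The route you propose instead --- a descent inequality for $\Phi+\Psi$ or for the forward-backward envelope --- is not carried out, and it faces two concrete obstacles: (a) the available subgradient lives at $x(t)+\dot x(t)$, not at $x(t)$, so $\frac{d}{dt}\Phi(x(t))$ cannot be computed from it via the chain rule of Lemma \ref{deriv-chain}, and $x(t)$ need not even lie in $\mbox{dom}\,\partial\Phi$; and (b) even granting such a descent inequality, decrease of the objective along the orbit does not by itself yield weak convergence of the trajectory (item i) nor the strong convergence of $\nabla\Psi(x(t))$ (item ii) --- some substitute for Opial's first condition is still required, and that is precisely the nontrivial part of the paper's proof of Part 2.
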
 

\begin{proof} We rely on a Lyapunov analysis.
Take $z\in S$. Equivalently 
\begin{equation}\label{pg10}
-Bz \in \partial \Phi (z). 
\end{equation}
Set $\xi (t) :=  x(t) - \mu B(x(t))$. By definition of $\mbox{{\rm prox}}_{\mu \Phi}$, 
 we have 
 $$\frac{1}{\mu}(\xi (t) - \mbox{{\rm prox}}_{\mu \Phi} \xi (t)) \in \partial \Phi ( \mbox{{\rm prox}}_{\mu \Phi} \xi (t)).$$
Since $  \mbox{{\rm prox}}_{\mu \Phi} \xi (t) =  x(t) + \dot{x}(t) $, the above equation can be written in equivalent way
\begin{equation}\label{pg11}
 - B(x(t)) -  \frac{1}{\mu} \dot{x}(t) \in \partial \Phi ( x(t) + \dot{x}(t)).
\end{equation}
By the monotonicity property of the operator $\partial \Phi$, and (\ref{pg10}), (\ref{pg11}), we obtain
\begin{equation}\label{pg13}
0 \geq   \left\langle  x(t)  -z + \dot{x}(t),  B(x(t)) -Bz + \frac{1}{\mu}  \dot{x}(t)   \right\rangle.
\end{equation}
Equivalently
\begin{equation}\label{pg14}
0 \geq   \frac{1}{2\mu} \frac{d}{dt} \| x(t)  -z  \|^2  + \frac{1}{\mu} \| \dot{x}(t) \|^2   + \left\langle  B(x(t)) -Bz , x(t)  -z  \right\rangle + \left\langle \dot{x}(t),  B(x(t)) -Bz  \right\rangle.
\end{equation}

1. Let us first examine the general case,  $B$  $\beta$-cocoercive.  From (\ref{pg14}),  it follows that
\begin{equation}\label{pg15}
0 \geq   \frac{1}{2\mu} \frac{d}{dt} \| x(t)  -z  \|^2  + \frac{1}{\mu} \| \dot{x}(t) \|^2   + \beta \|  B(x(t)) -Bz \|^2 + \left\langle \dot{x}(t),  B(x(t)) -Bz  \right\rangle.
\end{equation}
Let us introduce $\alpha >0$, a positive parameter. In order to estimate the last term in (\ref{pg15}), we use Cauchy-Schwarz inequality, and the following  elementary inequality
\begin{equation}\label{pg150}
\| \dot{x}(t)\|  \|B(x(t)) -Bz \| \leq \frac{1}{2\alpha}\| \dot{x}(t)\|^2  + \frac{\alpha}{2}\|  B(x(t)) -Bz \|^2 .
\end{equation}
From (\ref{pg15}) and (\ref{pg150}) we deduce that
\begin{equation}\label{pg151}
0 \geq   \frac{1}{2\mu} \frac{d}{dt} \| x(t)  -z  \|^2  + (\frac{1}{\mu}- \frac{1}{2\alpha}) \| \dot{x}(t) \|^2  + (\beta -\frac{\alpha}{2})\|  B(x(t)) -Bz \|^2 .
\end{equation}
Choose $\alpha$ such that $\frac{1}{\mu}- \frac{1}{2\alpha} >0$ and $\beta -\frac{\alpha}{2} >0$. This is equivalent to find $\frac{\mu}{2} < \alpha < 2 \beta$, which is possible
iff $\mu < 4 \beta$, that's precisely our condition on parameters $\mu$ and $\beta$.
When this condition is satisfied, taking (for example) $\alpha = \frac{1}{2}(\frac{\mu}{2} + 2 \beta)= \frac{\mu}{4} + \beta$ in (\ref{pg151}), we obtain
\begin{equation}\label{pg16}
0 \geq   \frac{1}{2\mu} \frac{d}{dt} \| x(t)  -z  \|^2  + \frac{4\beta -\mu}{\mu(\mu + 4 \beta)} \| \dot{x}(t) \|^2  + \frac{1}{8}(4\beta -\mu)\|  B(x(t)) -Bz \|^2 .
\end{equation}
From (\ref{pg16}), it follows that, for any $z \in S$, $t \mapsto \| x(t)  -z  \|$  is   a decreasing function, and hence $\lim \| x(t)  -z  \|$ exists. Moreover, by integration of (\ref{pg16}), we obtain
\begin{equation}\label{pg17}
\int_0^{\infty} \| \dot{x}(t) \|^2  dt < + \infty, 
\end{equation}
\begin{equation}\label{pg18}
\int_0^{\infty} \|  B(x(t)) -Bz \|^2 dt < + \infty. 
\end{equation}
Since $B$ is Lipschitz continuous, and $\| \dot{x}(t) \|$ belongs to $L^2(0,+\infty)$, we have $\frac{d}{dt}B(x) \in L^2(0,+\infty)$.
Hence, by (\ref{pg18}),   $B(x) -Bz$ and its derivative belong to $L^2(0,+\infty)$. By Lemma \ref{lm:aux} we infer
\begin{equation}\label{pg19}
\lim_{t\to +\infty}B(x(t))= Bz
\end{equation}
where $Bz$ is uniquely defined for $z \in S$.
On the other hand, by  (\ref{pg00}) and (\ref{pg17}), $\dot{x}$ and its derivative belong to $L^2(0,+\infty)$. By Lemma \ref{lm:aux} we infer
\begin{equation}\label{pg20}
\lim_{t\to +\infty}\dot{x}(t)   = 0.
\end{equation}
By Opial lemma \ref{Opial},  in order to obtain the weak convergence of the orbit $x$, we just need to prove that any weak sequential cluster point of $x$ belongs to $S$.
Let $\bar{x}$ be a weak sequential cluster point of $x$, i.e.,
$\bar{x}=w-\lim x(t_{n})$ for some sequence $t_{n}\rightarrow+\infty$.
In order to pass to the limit on (\ref{pg00}), we rewrite it as 
\begin{equation}\label{pg21}
 x(t) - \mu B(x(t))   - \mbox{{\rm prox}}_{\mu \Phi}  \left( x(t) - \mu B(x(t))  \right) =  - \dot{x}(t)  - \mu B(x(t)),
\end{equation}
and use the demiclosedness property of the maximal monotone operator $I- \mbox{{\rm prox}}_{\mu \Phi} $.
Since  $x(t_n) - \mu B(x(t_n)) \rightharpoonup \bar{x} - \mu Bz$, and $\dot{x}(t) + \mu B(x(t)) \to \mu Bz$ strongly, we obtain
\begin{equation}\label{pg22}
 \bar{x} - \mu Bz   - \mbox{{\rm prox}}_{\mu \Phi}  \left( \bar{x} - \mu Bz \right) =   - \mu Bz.
\end{equation}
Equivalently,
\begin{equation}\label{pg23}
 \bar{x} = \mbox{{\rm prox}}_{\mu \Phi}  \left( \bar{x} - \mu Bz \right),
\end{equation}
that is 
$$
\partial \Phi ( \bar{x}) + Bz \ni 0.
$$
Since $B$ is maximal monotone, it is demiclosed, and hence $B \bar{x} = Bz$. Thus 
$$
\partial \Phi ( \bar{x}) + B \bar{x} \ni 0,
$$
and $\bar{x} \in S$, which completes the proof.

\smallskip

2. Now consider  $B= \nabla \Psi$, where $\Psi$ is a convex differentiable function (a special case of  cocoercive operator), and show that we can conclude to the same convergence results, without making any restrictive assumption on $\mu >0$. Let us return to (\ref{pg15}). 
By using 
$$\left\langle \dot{x}(t),  \nabla \Psi (x(t)) -  \nabla \Psi (z) \right\rangle =  \frac{d}{dt} [ \Psi(x(t)) - \left\langle \nabla \Psi (z), x(t)  \right\rangle ],$$ 
we can rewrite (\ref{pg15})  as    
\begin{equation}\label{pg24}
0 \geq   \frac{d}{dt} [\frac{1}{2\mu} \| x(t)  -z  \|^2  +\Psi(x(t)) - \Psi(z) - \left\langle \nabla \Psi (z), x(t)-z  \right\rangle ] + \frac{1}{\mu} \| \dot{x}(t) \|^2   + \beta \|  B(x(t)) -Bz \|^2 .
\end{equation}
Since $\Psi(x(t)) - \Psi(z) - \left\langle \nabla \Psi (z), x(t)-z  \right\rangle$ is nonnegative, by a similar argument as before we obtain that
 $B(x(\cdot))$ converges strongly to $Bz$    where $Bz$ is uniquely defined for $z \in S$,  $\lim_{t\to\infty}\dot{x}(t)   = 0$, and $ \| \dot{x} \| \in L^2 (0, + \infty)$.
Moreover, any  weak sequential cluster point of $x$ belongs to $S$.
But unlike the previous situation, we do not know if the  limit of $\| x(t)  -z  \|$ exists.
Instead we have $ \lim E(t,z)$ exists for any $z \in S$, where
\begin{equation}\label{pg25}
E(t,z) := \frac{1}{2\mu} \| x(t)  -z  \|^2  +\Psi(x(t)) - \Psi(z) - \left\langle \nabla \Psi (z), x(t)-z  \right\rangle .
\end{equation}
Following the arguments in \cite{Bo}, we will show that this implies that $x$ has a unique weak sequential cluster point, 
which  clearly implies the weak convergence of the whole sequence.
Let $z_1$ and $z_2$ two weak sequential cluster points of $x$, i.e., $z_1 = w-\lim x(t_n)$, and $z_2 = w-\lim x(t'_n)$, for some sequences $t_n \to +\infty $ and $t'_n \to +\infty $.
We already obtained that $z_1$ and $z_2$ belong to $S$. Hence $E(t,z_1)$ and $E(t,z_2)$ converge as $t \to + \infty$, as well as $E(t,z_1)-E(t,z_2)$.
We deduce that the following limit exists
$$
\lim_{t \to + \infty}[ \frac{1}{\mu}\left\langle  x(t), z_2-z_1  \right\rangle + \left\langle \nabla \Psi (z_2) - \nabla \Psi (z_1), x(t)  \right\rangle ]. 
$$
Thus the limits obtained by successively replacing  $t$ by $t_n$ and $t'_n$ are equal, which gives
$$
 \frac{1}{\mu}\left\langle  z_1, z_2-z_1  \right\rangle + \left\langle \nabla \Psi (z_2) - \nabla \Psi (z_1), z_1  \right\rangle =  \frac{1}{\mu}\left\langle  z_2, z_2-z_1  \right\rangle + \left\langle \nabla \Psi (z_2) - \nabla \Psi (z_1), z_2  \right\rangle. 
$$
Equivalently
$$
\frac{1}{\mu}\| z_2-z_1 \|^2   +   \left\langle \nabla \Psi (z_2) - \nabla \Psi (z_1), z_2 -z_1 \right\rangle =0,
$$
which, by monotonicity of $\nabla \Psi$, gives $z_1= z_2$.
\end{proof}

\begin{remark} {\rm Under the more restrictive assumption, $0 < \mu < 2 \beta$,  using  the results of section \ref{cocoer}, 
 the operator that governs the dynamical system  is of the form $I-T$, where $T$ is a contraction. Accordingly,
the operator $I-T$ is demipositive,  and  the weak convergence of $x$ is a direct consequence of  Bruck Theorem \cite{Bruck}.
It is an open question  whether the convergence property is true for a general cocoercive operator $B$,  without  restriction on $\mu >0$.
}
\end{remark}

\subsection{Relaxed forward-backward algorithms}

The explicit discretization of the regular dynamic (\ref{pg00})
with respect to the time variable $t$, with constant step size $h>0$,
gives 
\begin{equation}
\frac{x_{k+1}-x_{k}}{h}+ x_{k}    - \mbox{{\rm prox}}_{\mu \Phi}  \left( x_{k} - \mu B(x_{k})  \right) =0.
\end{equation}
Equivalently
\vspace{1mm}
\begin{center}
 $x_{k+1} =  (1- h)x_k +  h \mbox{{\rm prox}}_{\mu \Phi}\left( x_k  - \mu B(x_{k})\right).$
\end{center}
This is the relaxed forward-backward algorithm, whose convergence properties are well known.
The weak convergence of $(x_k)$  to an element of $S$ is obtained under the stepsize limitation: $0 < \mu < 2\beta$, and $0< h \leq 1$.
One can consult \cite[Theorem 25.8]{BC}, for the proof, and some further extensions of this result.

\section{Perspective}

Our work can be considered from two perspectives: numerical splitting methods in optimization, and modeling in physics, decision sciences.

1. In recent years, there has been a great interest in the forward-backward methods, especially in the signal/image processing, and sparse optimization. A better understanding of these methods is a key to obtain further developments, and improvement of the methods:
fast converging algorithms, nonconvex setting, multiobjective optimization, ... are crucial points to consider in the future.
To cite some of these topics, in \cite{ABS} the convergence of the classical forward-backward method, in a nonconvex nonsmooth framework, has been proved for functions satisfying the Kurdyka-Lojasiewiz inequality, a large class  containing the semi-algebraic functions. 
The proof finds its roots in a dynamical argument. It is an open question to know if some other form of the FB algorithms works in this setting. Even for the relaxed FB algorithm this is an open question. \\ 
Similarly, the Nesterov method for obtaining   convergence rate $O(\frac{1}{k^2})$, is known for the classical forward-backward algorithm, see \cite{Nest2}, \cite{BT09} (FISTA method). 
It would be very interesting to know if the method can be adapted to other forms of these algorithms.\\
It turns out that there is a rich family of forward-backward algorithms. In this article, we have considered three classes of these algorithms.
The link with the dynamical systems is a valuable tool for studying  these algorithms, and to discover new one.
The comparison between the algorithms that are obtained by time discretization of the continuous dynamics is a delicate subject, which is the subject of current research.

\bigskip

2. Many equilibrium problems in physical sciences or decision may be written either as  convex minimization problem or as a search for a fixed point of a contraction.
Often these two aspects are present simultaneously. For example, in game theory, agents may adopt strategies involving  cooperative aspects (potential games) and noncooperative aspects.
Nash equilibrium formulation can lead to a convex-concave saddle value problem, and non-potential monotone operators.
An abundant literature has been devoted to finding common solutions of these problems.
In contrast, our approach aims at finding a compromise solution of these two different types of problems. A basic ingredient is the resolution of $ \partial \Phi (x) + Bx \ni 0$.
An interesting direction for future research would be to consider a multicriteria dynamical process associated to the  two operators $ \partial \Phi$ and  $B$, in  line of the recent article 
\cite{AG}. \\
The selection of equilibria with desirable properties is an important issue in decision sciences.
  With the introduction of regularization terms tending asymptotically to zero, not too quickly (eg Tikhonov type),  the dynamic equilibrium approach provides an asymptotic hierarchical selection. 
There is an extensive literature on this topic, see  \cite{atcom2}, \cite{AttCza}, \cite{BaCo}, \cite {Wei Bian}, \cite{Bo}, 
\cite{Cab}, \cite{CH}, \cite{CPS}, \cite{Hir}, and references therein. It is an issue that is largely unexplored for the systems considered in this article.


\begin{thebibliography}{10}

\bibitem{AAS} B. Abbas, H. Attouch, B. F. Svaiter, {\em  Newton-like dynamics
and forward-backward methods for structured monotone inclusions in
Hilbert spaces}, JOTA, DOI 10.1007/s10957-013-0414-5, (2013).

\bibitem{Ant} A.S. Antipin,
{\em Minimization of convex functions on convex sets by means of differential equations}, 
Differential Equations, \textbf{30}  (2010), pp. 3246--3270.

\bibitem{ABS} H. Attouch, J. Bolte, B. F. Svaiter,
{\em Convergence of descent methods for semi-algebraic and tame problems: proximal algorithms, forward-backward splitting, and regularized Gauss-Seidel methods},
Mathematical Programming,  \textbf{137} (2013), No. 1, pp. 91--129.


\bibitem{abc} H. Attouch, L. M. Briceño-Arias, and P. L. Combettes,
{\em A parallel splitting method for coupled monotone inclusions}, 
SIAM J. Control Optim., \textbf{ 48} (1994),  pp. 1365--1375.

\bibitem{atcom2} H. Attouch, R. Cominetti, {\em A dynamical approach to
convex minimization coupling approximation with the steepest descent
method},  J. Differential Equations, \textbf{128} (1996)(2), pp. 519--540.

\bibitem{AttCza} H. Attouch, M.-O. Czarnecki, {\em Asymptotic behavior of coupled dynamical systems with multiscale aspects}, 
J. Differential Equations, \textbf{248} (2010)(6), pp. 1315-1344.

\bibitem{acp1} H. Attouch, M.-O. Czarnecki, and J. Peypouquet, {\em Coupling
forward-backward with penalty schemes and parallel splitting for constrained
variational inequalities},  SIAM J. Optim.,  \textbf{21}(4) (2011), 
pp. 1251--1274.

\bibitem{AG} H. Attouch, X. Goudou,   {\em A continuous gradient-like dynamical approach
to Pareto-optimization in Hilbert spaces},   Set-Valued and Variational Analysis, \textbf{22}(1) (2014), pp. 189--219. 

\bibitem{AM} H. Attouch, P.E. Maingé, {\em Asymptotic behavior of second
order dissipative evolution equations combining potential with non-potential
effects}, ESAIM Control Optim. Calc. of Var., \textbf{17}(3)  (2011),
pp. 836-857.

\bibitem{APR2} H. Attouch, J. Peypouquet, P. Redont,  
{\em A dynamical approach to an inertial forward-backward algorithm for convex minimization},
 SIAM J. Optim., 24 (1) (2014),  pp. 232--256. 

\bibitem{APR} H. Attouch, J. Peypouquet, and P. Redont, {\em A splitting
algorithm combining forward-backward and regularized Newton's methods
for structured convex minimization}, working paper, (2013). 

\bibitem{ARS} H. Attouch, P. Redont, B. F. Svaiter, {\em  Global convergence
of a closed-loop regularized Newton method for solving monotone inclusions
in Hilbert spaces},  JOTA, \textbf{157} (2013),  pp. 624--650.

\bibitem{AS} H. Attouch, B. F. Svaiter, {\em A continuous dynamical Newton-Like
approach to solving monotone inclusions}, SIAM J. Control Optim.,
\textbf{49} (2011),  pp. 574--598.

\bibitem{AT} H. Attouch, M. Théra, {\em A general duality principle for
the sum of two operators}, J. Convex Analysis,\textbf{ 3} (1996), pp. 1--24. 

\bibitem{Ba} J.-B. Baillon, \textit{Un exemple concernant le comportement
asymptotique de la solution du problème $\frac{du}{dt}+\partial\phi(u)\ni0$},
J. Funct. Anal., \textbf{28} (1978), pp. 369--376. 

\bibitem{BB} J.-B. Baillon and H. Brézis, {\em Une remarque sur le comportement
asymptotique des semi-groupes non linéaires}, Houston J. Math., \textbf{2} (1976),
pp. 5--7. 

\bibitem{BaCo}  J.-B. Baillon and  R. Cominetti, {\em A convergence result for non-autonomous subgradient evolution equations and its application to
the steepest descent exponential penalty trajectory in linear programming}, J. Funct. Anal., \textbf{187} (2001), pp.  263--273.

\bibitem{Bail77} J.-B. Baillon and G. Haddad, \textit{Quelques propriétés
des opérateurs angle-bornés et $n$-cycliquement monotones}, Isra\"el
J. Math., \textbf{26} (1977), pp.~137--150.

\bibitem{BC} H. H. Bauschke and P. L. Combettes, {\em Convex analysis
and monotone operator theory in Hilbert spaces}, Springer, New York,
(2011).

\bibitem{BT09} A. Beck and M. Teboulle, {\em A fast iterative shrinkage-thresholding
algorithm for linear inverse problems}, SIAM Journal on Imaging Sciences,
\textbf{2}(1)  (2009), pp. 183--202.

\bibitem{BT} A. Beck and M. Teboulle, {\em Gradient-based algorithms with
applications in signal recovery problems}, In Convex Optimization in
Signal Processing and Communications, D. Palomar and Y. Eldar Eds.,
pp. 33--88. Cambridge University Press, (2010).

\bibitem{Ber} D. Bertsekas, {\em Nonlinear programming}, Athena
Scientific, Belmont MA, (1999).

\bibitem{Wei Bian} W. Bian and X. Xue,
{\em Asymptotic behavior analysis on multivalued evolution inclusion with projection in Hilbert space}, 
Optimization, DOI: 10.1080/02331934.2013.811668, (2013).

\bibitem{Bo} J. Bolte, {\em Continuous gradient projection method in Hilbert spaces}, 
JOTA,  \textbf{119}(2)  (2003), pp. 235--259.

\bibitem{Br} H. Brézis, {\em Opérateurs maximaux monotones et semi-groupes
de contractions dans les espaces de Hilbert}, North-Holland/Elsevier,
New-York, (1973).

\bibitem{Siop11} L. M. Briceño-Arias and P. L. Combettes, {\em A monotone+skew
splitting model for composite monotone inclusions in duality}, 
SIAM J. Optim.,  \textbf{ 21} (2011), pp. 1230--1250.

\bibitem{Bruck} R.E. Bruck,  {\em  Asymptotic convergence of nonlinear contraction
semigroups in Hilbert spaces}, J. Funct. Anal.,  \textbf{18 } (1975),
pp.~15--26. 

\bibitem{Cab} A. Cabot, {\em  The steepest descent dynamical system with control. Applications to constrained minimization}, ESAIM Control
Optim. Calc. Var., \textbf{10} (2004), pp.  243--258.

\bibitem{CRT} E. J. Candès, J. K. Romberg and T. Tao, {\em  Stable signal
recovery from incomplete and inaccurate measurements}, Comm.
Pure Appl. Math., \textbf{59}(8)  (2006), pp. 1207--1223.

\bibitem{CT} G. Chen and M. Teboulle, {\em A proximal-based decomposition
method for convex minimization problems},  Math. Programming,
 \textbf{64} (1994),  pp.~81--101.

\bibitem{Comb04} P. L. Combettes, {\em Solving monotone inclusions via
compositions of nonexpansive averaged operators},  Optimization,
\textbf{53}  (2004), pp. 475--504.

\bibitem{CH} P. L. Combettes and S. A. Hirstoaga, {\em Approximating curves for nonexpansive and monotone operators},  
Journal of Convex Analysis, \textbf{13}(3-4)  (2006), pp. 633--646.

\bibitem{CP} P. L. Combettes and J.-C. Pesquet, {\em  Proximal splitting
methods in signal processing}, in: Fixed-Point Algorithms for Inverse
Problems in Science and Engineering, (H. H. Bauschke, R. S. Burachik,
P. L. Combettes, V. Elser, D. R. Luke, and H. Wolkowicz, Editors),
pp. 185-212. Springer, New York, (2011).

\bibitem{CW} P. L. Combettes and V. R. Wajs, {\em Signal recovery by proximal
forward-backward splitting}, Multiscale Modeling and Simulation,
\textbf{4}(4) ( 2005), pp. 1168--1200.

\bibitem{CPS} R. Cominetti, J. Peypouquet, and S. Sorin, {\em Strong asymptotic convergence of evolution equations
governed by maximal monotone operators with Tikhonov regularization}, 
J. Differential Equations, \textbf{245} (2008), pp.  3753--3763.

\bibitem{Dani} A. Daniilidis, \textit{Gradient dynamical systems,
tame optimization and applications}, Lecture Notes, Spring School
on Variational Analysis Paseky nad Jizerou, Czech Republic, April
20-24, 2009.

\bibitem{donoho} D. L. Donoho, {\em Compressed sensing}, IEEE Trans. Inform.
Theory, \textbf{4} (2006), pp. 1289--1306.

\bibitem{Guler} O. Güler, {\em  On the convergence of the proximal point
algorithm for convex minimization}, SIAM J. Control Optim.,  \textbf{29} (1991),  pp. 403--419.

\bibitem{Hir}  S. A. Hirstoaga, {\em PhD thesis}, Universit\'e Pierre et Marie Curie, Paris VI,
(2006).

\bibitem{LM} P.L. Lions and B. Mercier, {\em  Splitting algorithms for
the sum of two nonlinear operators}, SIAM Journal on Numerical
Analysis,  \textbf{16} (1979),  pp. 964--979.

\bibitem{Mainge} P.-E. Maingé, {\em First-order continuous Newton-like
systems for monotone inclusions},  SIAM J. Control Optim.,  \textbf{51}(2)  (2013), pp. 1615--1638.

\bibitem{Nest1} Y. E. Nesterov, {\em Introductory lectures on convex optimization},
Kluwer, Boston, 2004.

\bibitem{Nest2} Y. E. Nesterov, {\em  A method for solving the convex programming
problem with convergence rate $O(1/k^2)$, Dokl. Akad. Nauk SSSR},
269(3), pp. 543--547, (1983).

\bibitem{Op} Z. Opial, {\em Weak convergence of the sequence of successive
approximations for nonexpansive mappings},  Bull. Amer. Math.
Soc.,  \textbf{73}  (1967), pp. 591--597.

\bibitem{OR00} J. M. Ortega and W. C. Rheinboldt, {\em  Iterative solution
of nonlinear equations in several variables}, Classics Appl. Math.
30, SIAM, Philadelphia, (2000).

\bibitem{Pa} G.B Passty, {\em  Ergodic convergence to a zero of the sum
of monotone operators in Hilbert space},  J. Math. Anal. Appl.,
\textbf{72} (1979), pp. 383--390.

\bibitem{Pol} B. Polyak, {\em  Introduction to optimization}, {\em Optimization
software}, New-York, (1987).

\bibitem{Ra} R.T. Rockafellar, {\em  Monotone operators and the proximal
point algorithm}, SIAM J. Control Optim., \textbf{15}(5) (1976), pp. 877--898.

\bibitem{SoSv} M. V. Solodov and B. F. Svaiter, {\em  A hybrid approximate
extragradient-proximal point algorithm using the enlargement of a
maximal monotone operator}, Set Valued Analysis,  \textbf{7} (1999),
pp. 323--345.

\bibitem{Tseng1} P. Tseng, {\em  Further applications of a splitting algorithm
to decomposition in variational inequalities and convex programming},
 Math. Programming, \textbf{48}  (1990), pp. 249--263.

\bibitem{Tseng2} P. Tseng, {\em  Applications of a splitting algorithm
to decomposition in convex programming and variational inequalities},
 SIAM J. Control Optim.,  \textbf{29}  (1991), pp. 119--138.

\bibitem{Tseng3} P. Tseng, {\em  A modified forward-backward splitting
method for maximal monotone mappings}, SIAM J. Control Optim.,
 \textbf{38} (2000), pp. 431--446.

\bibitem{Zhud96} D. L. Zhu and P. Marcotte, {\em  Co-coercivity and its
role in the convergence of iterative schemes for solving variational
inequalities}, J. Optim., \textbf{6} (1996), pp.~714--726.

\end{thebibliography}
\end{document}